\newcommand{\TheTitle}{Analysis of parallel Schwarz algorithms for time-harmonic problems using block Toeplitz matrices}
\newcommand{\TheAuthors}{N. Bootland, V. Dolean, A. Kyriakis, and J. Pestana}
\title{{\TheTitle}\thanks{The first two authors gratefully acknowledge support from the EPSRC grant EP/S004017/1. The fourth author gratefully acknowledges support from the EPSRC grant EP/R009821/1.}}%
\author{N. Bootland%
	\thanks{Department of Mathematics and Statistics, University of Strathclyde, Glasgow, UK (\email{niall.bootland@strath.ac.uk}, \email{alexandros.kyriakis@strath.ac.uk}, \email{jennifer.pestana@strath.ac.uk}).}%
	\and
	V. Dolean%
	\thanks{Department of Mathematics and Statistics, University of Strathclyde, Glasgow, UK and Laboratoire J.A.~Dieudonn\'e, CNRS, University C\^ote d'Azur, Nice, France (\email{work@victoritadolean.com}).}%
	\and
	A. Kyriakis\footnotemark[2]%
	\and
	J. Pestana\footnotemark[2]
}
\Crefname{ALC@unique}{Line}{Lines}
\begin{document}

\maketitle

\begin{abstract}
	In this work we study the convergence properties of the one-level parallel Schwarz method with Robin transmission conditions applied to the one-dimensional and two-dimensional Helmholtz and Maxwell's equations. One-level methods are not scalable in general. However, it has recently been proven that when impedance transmission conditions are used in the case of the algorithm applied to the equations with absorption, under certain assumptions, scalability can be achieved and no coarse space is required. We show here that this result is also true for the iterative version of the method at the continuous level for strip-wise decompositions into subdomains that can typically be encountered when solving wave-guide problems. The convergence proof relies on the particular block Toeplitz structure of the global iteration matrix. Although non-Hermitian, we prove that its limiting spectrum has a near identical form to that of a Hermitian matrix of the same structure. We illustrate our results with numerical experiments.
\end{abstract}

\begin{keywords}
	domain decomposition methods, Helmholtz equations, Maxwell equations, Schwarz algorithms, one-level methods, block Toeplitz matrices
\end{keywords}

\begin{AMS}
	65N55, 65N35, 65F10, 15A18, 15B05
\end{AMS}

\section{Introduction}
\label{sec:intro}

Time-harmonic wave propagation problems, such as those arising in electromagnetic and seismic applications, are notoriously difficult to solve for several reasons. At the continuous level, the underlying boundary value problems lead to non self-adjoint operators (when impedance boundary conditions are used). The discretisation of these operators by a Galerkin method requires an increasing number of discretisation points as the wave number grows in order to avoid the pollution effect, that is a shift in the numerical wave velocity with respect to the continuous one \cite{Babuska:1997:IPE}. This leads to increasingly large linear systems with non-Hermitian matrices that are difficult to solve by classical iterative methods \cite{Ernst:12:NAM}.

In the past two decades, different classes of efficient solvers and preconditioners have been devised; see the review \cite{Gander:2018:SIREV} and references therein. One important class is based on domain decomposition methods \cite{Dolean:15:DDM}, which are a good compromise between direct and iterative methods. Some of these domain decomposition methods rely on improving the transmission conditions, that pass data between subdomains, to give optimised transmission conditions; see the seminal work on Helmholtz equations \cite{Gander:2007:OSM} and its extension to Maxwell's equations \cite{Dolean:2015:ETC, Dolean:09:OSM, Dolean:08:DDM, ElBouajaji:12:OSM} as well as to elastic waves \cite{Brunet:2019:NDD,Mattesi:2019:ABC}. For large-scale problems, in order to achieve robustness with respect to the number of subdomains (scalability) and the wave number, two-level domain decomposition solvers have been developed in recent years: they are based on the idea of using the absorptive counterpart of the equations as a preconditioner, which in turn is solved by a domain decomposition method. These methods were successfully applied to Helmholtz and Maxwell's equations, which arise naturally in different applications \cite{Bonazzoli:2019:ADD,Dolean:2020:IFD,Graham:2017:RRD}.

However, an alternative idea emerged in the last few years by observing that, when using Robin or impedance transmission conditions, under certain assumptions involving the physical and numerical parameters of the problem (i.e., absorption, size of the subdomains, etc.) one-level Schwarz algorithms can scale weakly (have a convergence rate that does not deteriorate as the number of subdomains grows) without the addition of a second level \cite{Gong:2021:DDP,Graham:2020:DDI}. The notion of scalability here applies over a family of problems rather than for a fixed problem. In essence, weak scalability is achieved such that the convergence rate of the domain decomposition method does not deteriorate for harder problems in the family when an appropriate number of subdomains is used. In other words, adding more subdomains allows us to solve harder problems while achieving the same convergence rate.

Achieving scalability without a coarse space in the case of a decomposition into chains of subdomains was first observed for problems arising in computational chemistry; see \cite{Cances:2013:DDI}. However, the first true scalability analysis, based on Fourier techniques, was developed in \cite{Ciaramella:2017:APS} for a classical parallel Schwarz method on a rectangular chain of fixed-size subdomains and provides the first concrete construction of the Schwarz iteration operator in Fourier space. This technique was extended in \cite{Chaouqui:2018:OSC} to other types of one-level methods. Weak scalability results for the Laplace problem have been proven for more general chain-type geometries using various techniques, such as the maximum principle in \cite{Ciaramella:2018:APSa} and a fully variational analysis in \cite{Ciaramella:2018:APSb}. The most recent work on the topic without restrictive assumptions can be found in \cite{Ciaramella:2020:OSS} where a propagation-tracking analysis based on graph theory and the maximum principle permitted a scalability analysis for very general decompositions. To our knowledge, there is no such analysis on Schwarz methods for time-harmonic wave propagation problems, where previous techniques no longer extend as the nature of the underlying equations is very different.

In our work, we would like to explore this idea of weak scalability at the continuous level (independent of the discretisation) for a strip-wise decomposition into subdomains as it arises naturally in the solution of wave-guide problems. While in \cite{Gong:2021:DDP,Graham:2020:DDI} the family of problems is parametrised by the wave number $k$ and the focus is on $k$-robustness, here we focus on the weak scalability aspect for a family consisting of a growing chain of fixed-size subdomains. Nonetheless, we will see that $k$-robustness in certain scenarios can easily be derived from our theory. The main contributions of the paper are the following:
\begin{itemize}
	\item We provide analysis of the limiting spectrum, as the number of subdomains grows, for a one-level Schwarz method applied to a strip-wise decomposition. While our analysis is limited to this simple yet realistic configuration (wave propagation in a rectangular wave-guide with Dirichlet conditions on the top and bottom boundaries and Robin condition at its ends), it is valid at the continuous level both for one-dimensional and two-dimensional Helmholtz and Maxwell's equations.
	\item We build on the formalism of iteration matrices acting on interface data introduced in \cite{Chaouqui:2018:OSC} (where Schwarz methods using strip-wise decompositions were analysed for Laplace's equation), but here we are able to characterise the entire spectrum of these iteration matrices by using their block Toeplitz structure, even if upper bounds on the iteration matrix norm could have been derived in a similar manner.
	\item Despite the fact that the block Toeplitz structure is non-Hermitian, and thus results from the standard literature on Toeplitz matrices do not apply in a straightforward manner, we prove that the limiting spectrum of the iteration matrices as their size grows (corresponding to an increasing number of subdomains) tends to the limit predicted by the eigenvalues of the symbol of the block Toeplitz matrix, except perhaps for two additional eigenvalues. This novel approach, utilising the limiting spectrum, is quite general and can be applied to other problems as an analysis tool for domain decomposition methods where such block Toeplitz structure arises naturally.
	\item We show that the limiting spectrum is descriptive of what is observed in practice numerically, even for a relatively small number of subdomains.
	\item As a corollary to our theory we show that, in certain scenarios and with $k$-dependent domain decomposition parameters, the one-level method can be $k$-robust as the wave number $k$ increases; in the Maxwell case we believe this to be a novel result.
\end{itemize}
The structure of the paper is as follows: In \Cref{sec:toeplitz} we present our results on the limiting spectrum of a non-Hermitian block Toeplitz matrix whose characteristic polynomial verifies a three-term recurrence. In \Cref{sec:1dcase,sec:2dcase} we apply these results to the analysis of the iterative Schwarz algorithm in the one-dimensional and two-dimensional cases. We illustrate the theory with numerical results in \Cref{sec:num}. Finally, \Cref{sec:conc} draws together our conclusions.

The codes used to provide numerical results in this work as well as several \texttt{Maple} worksheets, that confirm some of the more involved calculations required in \Cref{sec:1dcase,sec:2dcase}, are provided at \url{https://github.com/vicdolean/schwarz}.

\section{A non-Hermitian block Toeplitz structure}
\label{sec:toeplitz}

Consider a non-Hermitian block Toeplitz matrix $\mathcal{T} \in \mathbb{C}^{2m\times2m}$ of the form
\begin{subequations}
	\label{NonHermitianBlockToeplitzStructure}
	\begin{align}
		\label{NonHermitianBlockToeplitzStructure-Matrix}
		\mathcal{T}
		= \left[\begin{array}{ccccc}
			A_{0} & A_{1} & & & \\
			A_{-1} & A_{0} & A_{1} & & \\
			& \ddots & \ddots & \ddots & \\
			& & A_{-1} & A_0 & A_1 \\
			& & & A_{-1} & A_0
		\end{array}\right],
	\end{align}
	where
	\begin{align}
		\label{NonHermitianBlockToeplitzStructure-Blocks}
		A_0 = \left[\begin{array}{cc} 0 & b \\ b & 0 \end{array}\right], \ A_1 = \left[\begin{array}{cc} a & 0 \\ 0 & 0 \end{array}\right], \ A_{-1} = \left[\begin{array}{cc} 0 & 0 \\ 0 & a \end{array}\right],
	\end{align}
\end{subequations}
for some non-zero complex coefficients $a$ and $b$. We will see in the sections that follow that such non-Hermitian block Toeplitz structures arise naturally for iterative Schwarz algorithms applied to wave propagation problems. We are interested in a characterisation of the complete spectrum of the matrix $\mathcal{T}$ in \eqref{NonHermitianBlockToeplitzStructure} when its dimension becomes large. This will equate to the number of subdomains $N$ in the Schwarz method being large. The coefficients $a$ and $b$ stem from the particular PDE and domain decomposition used; we consider them to be fixed independent of the dimension of $\mathcal{T}$, and thus $N$, which corresponds to fixed-size subdomains.

The so-called Szeg\H{o} formula enables the asymptotic spectrum, i.e., the spectrum as $m \to \infty$, of a wide class of Hermitian block Toeplitz matrices to be characterised by the eigenvalues of an associated matrix-valued function called the (block) symbol~\cite{Tillinota}. For non-Hermitian matrices, analogous results do not exist in general \cite{Tillinota}, but do hold when the union of the essential ranges of the eigenvalues of the block symbol has empty interior and does not disconnect the complex plane \cite{DNS12}. Unfortunately, $\mathcal{T}$ in \eqref{NonHermitianBlockToeplitzStructure} has symbol $F(z) = A_{-1}z+ A_0 + A_{1}z^{-1}$ and, for relevant values of $a$ and $b$, the union of essential ranges is a closed curve. Additional characterisations of the asymptotic spectrum of (block) banded Toeplitz matrices are available~\cite{Hirs67,ScSp60,Wido74}, but these do not provide explicit formulae for the eigenvalues, as we shall in \Cref{theorem:LimitingSpectrum}. Other formulae for the eigenvalues~\cite{SaMo13} and determinant~\cite{Time87} of block tridiagonal Toeplitz matrices are known, however, they are applicable only when $A_1$ (or $A_{-1}$) is nonsingular.

We also remark that the matrix $\mathcal{T}$ will be an iteration matrix in the Schwarz algorithms we later consider. Hence, to prove convergence of these Schwarz methods it would be sufficient to bound the spectral radius of $\mathcal{T}$, for example using a matrix norm. It is straightforward to see that $\|\mathcal{T}\|_\infty = |a| + |b|$, and it is also possible to show, using \cite[Corollary 3.5]{Serr02}, that
\begin{align*}
	\|\mathcal{T}\|_2 \le \max \left\lbrace \sqrt{|a|^2 \pm 2\Re(a \overline{b}) + |b|^2} \right\rbrace.
\end{align*}
However, since $a$ and $b$ are complex neither norm is straightforward to bound above by 1. Additionally, characterising the full spectrum provides more information than the spectral radius alone. Accordingly, in this section we derive the limiting spectrum of $\mathcal{T}$.

In order to establish a result on the spectrum of $\mathcal{T}$, we first show that the characteristic polynomials of \eqref{NonHermitianBlockToeplitzStructure} for increasing $m$ obey a three-term recurrence relation.
\begin{lemma}[Three-term recurrence and generating function]
	\label{lemma:ThreeTermRecurrence}
	Let $p_{m}(z)$ denote the characteristic polynomial of the block Toeplitz matrix $\mathcal{T} \in \mathbb{C}^{2m\times2m}$ defined in \eqref{NonHermitianBlockToeplitzStructure}. Then $p_{m}(z)$ satisfies the three-term recurrence relation
	\begin{align}
		\label{ThreeRecurrenceRelation-General}
		p_{m}(z) + B(z) p_{m-1}(z) + A(z) p_{m-2}(z) & = 0, & \text{for} \ m \ge 2,
	\end{align}
	with $A(z) = a^2 z^2$ and $B(z) = - z^2 + b^2 - a^2$ and where $p_{0}(z) = 1$ and $p_{1}(z) = z^{2} - b^{2}$. Furthermore, this recurrence relation is encoded in the generating function
	\begin{align}
		\label{GeneratingFunctionDefinition}
		\sum_{m=0}^{\infty} p_{m}(z) t^{m} = \frac{N(t,z)}{D(t,z)},
	\end{align}
	where
	\begin{subequations}
		\label{GeneratingFunctionDAndN}
		\begin{align}
			\label{GeneratingFunctionD}
			D(t,z) & = 1 + B(z)t + A(z)t^2,\\
			\label{GeneratingFunctionN}
			N(t,z) & = p_{0}(z) + (p_{1}(z) + B(z)p_{0}(z))t.
		\end{align}
	\end{subequations}
	Thus, in our case, $D(t,z) = 1 - (z^2-b^2+a^2)t + a^2 z^2 t^2$ while $N(t,z) = 1 - a^2 t$.
\end{lemma}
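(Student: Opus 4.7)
The plan is to reduce the block determinant $p_m(z) = \det(zI_{2m} - \mathcal{T})$ to an ordinary tridiagonal determinant of size $m$, after which the three-term recurrence follows from a standard cofactor expansion. The first step is to relabel the $2m$ indices so that all odd positions come before all even positions. Under the corresponding permutation similarity, $\mathcal{T}$ takes the $2 \times 2$ block form
\[
\begin{pmatrix} a N & b I_m \\ b I_m & a N^T \end{pmatrix},
\]
where $N$ is the $m \times m$ nilpotent upper-shift matrix (ones on the first superdiagonal, zeros elsewhere). The off-diagonal blocks are now scalar multiples of the identity, and hence commute with everything.

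Since $-bI_m$ commutes with $zI - aN^T$, the standard commuting-block determinant formula applies and gives
\[
p_m(z) = \det\bigl((zI - aN)(zI - aN^T) - b^2 I\bigr) = \det\bigl((z^2 - b^2)I - az(N + N^T) + a^2 NN^T\bigr).
\]
Because $NN^T = \operatorname{diag}(1,\ldots,1,0)$, the right-hand side is an $m \times m$ tridiagonal matrix that is Toeplitz apart from its bottom-right diagonal entry, which is $z^2 - b^2$ rather than $z^2 - b^2 + a^2$. A cofactor expansion along the last row expresses $p_m(z)$ in terms of the determinants $R_{m-1}(z)$ and $R_{m-2}(z)$ of the purely Toeplitz tridiagonal matrix of the corresponding sizes, and substituting into the classical Toeplitz-tridiagonal recurrence $R_m = (z^2 - b^2 + a^2)R_{m-1} - a^2 z^2 R_{m-2}$ to eliminate the $R$'s yields exactly \eqref{ThreeRecurrenceRelation-General}. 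The initial values $p_0(z) = 1$ and $p_1(z) = \det(zI - A_0) = z^2 - b^2$ are immediate from the definition.

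Once the recurrence is in hand, the generating-function identity is routine: multiplying $\sum_{m \ge 0} p_m(z)t^m$ by $D(t,z) = 1 + B(z)t + A(z)t^2$ collapses every coefficient of $t^m$ with $m \ge 2$ to zero via \eqref{ThreeRecurrenceRelation-General}, leaving $p_0(z) + (p_1(z) + B(z)p_0(z))t = N(t,z)$.

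The main obstacle is the reduction step. A naive block-tridiagonal approach fails because both $A_1$ and $A_{-1}$ are rank-one (hence singular), so the classical formulas cited in \cite{SaMo13,Time87} do not apply. The odd--even relabelling is the key trick: it trades the rank-one singular off-diagonal blocks of $\mathcal{T}$ for scalar multiples of the identity, which is precisely what makes the clean commutative-block determinant formula available.
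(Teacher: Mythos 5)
Your proposal is correct, but it takes a genuinely different route from the paper's. The paper stays in the original block-tridiagonal ordering and works directly on the $2m\times2m$ determinant: it introduces auxiliary bordered determinants $r_m(z)$, obtains coupled recurrences for $p_m$ and $r_m$ by cofactor expansion, and eliminates $r_m$ to reach \eqref{ThreeRecurrenceRelation-General}. You instead apply the odd--even (perfect shuffle) permutation similarity, which turns the rank-one off-diagonal blocks into the scalar blocks of $\bigl(\begin{smallmatrix} aN & bI_m\\ bI_m & aN^{T}\end{smallmatrix}\bigr)$, invoke the commuting-block determinant formula, and land on the $m\times m$ near-Toeplitz tridiagonal determinant $\det\bigl((z^{2}-b^{2})I - az(N+N^{T}) + a^{2}NN^{T}\bigr)$. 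I checked the reduction: the shuffled form is right, $NN^{T}=\mathrm{diag}(1,\dots,1,0)$ perturbs only the last diagonal entry, the $m=1,2$ cases reproduce \eqref{FirstTwoCharacteristicPolynomials}, and the last-row expansion gives $p_m=(z^{2}-b^{2})R_{m-1}-a^{2}z^{2}R_{m-2}=R_m-a^{2}R_{m-1}$, so $p_m$ inherits the recurrence from the purely Toeplitz sequence $R_m$ as a fixed linear combination of two of its consecutive terms. Your route is more structural: it makes visible where the coefficients $z^{2}-b^{2}+a^{2}$ and $a^{2}z^{2}$ come from (the diagonal entry and the product of the off-diagonal entries of the reduced tridiagonal matrix), and it cleanly sidesteps the singularity of $A_{\pm1}$ that blocks the classical block-tridiagonal formulas, at the cost of needing the shuffle and the commuting-block determinant identity. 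The paper's route is more elementary and self-contained but less transparent about the origin of the coefficients. The generating-function step is the same routine manipulation in both.
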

\begin{proof}
	We first prove the recurrence relation. Let $D_{m}$ be the $2m\times2m$ matrix whose determinant is the characteristic polynomial of $\mathcal{T}$ in the variable $z$. Note that the first two characteristic polynomials are
	\begin{subequations}
		\label{FirstTwoCharacteristicPolynomials}
		\begin{align}
			p_{1}(z) &= \det(D_{1}) = \left|\begin{array}{cc} -z & b \\ b & -z \end{array}\right| = z^{2} - b^{2}, \\
			p_{2}(z) &= \det(D_{2}) = \left|\begin{array}{cccc} -z & b & a & 0 \\ b & -z & 0 & 0 \\ 0 & 0 & -z & b \\ 0 & a & b & -z \end{array}\right| = (z^{2} - b^{2})^{2} - a^{2} b^{2}.
		\end{align}
	\end{subequations}
	To derive a recurrence relation, let us also define the intermediary determinants $r_{m}(z)$ which arise as the minor of $D_{m}$ having removed the second row and first column,
	\begin{align*}
		r_{m}(z) \vcentcolon= \left|\begin{array}{c;{2pt/2pt}cccc} b & a & 0 & 0 & \cdots \\ \hdashline[2pt/2pt] 0 & \multicolumn{4}{c}{\multirow{4}{*}{$D_{m-1}$}} \\ a & & & & \\ 0 & & & & \\ \vdots & & & & \end{array}\right| = \left|\begin{array}{c;{2pt/2pt}cc;{2pt/2pt}cc} b & a & 0 & 0 & \cdots \\ \hdashline[2pt/2pt] 0 & -z & b & a & 0 \\ a & b & -z & 0 & 0 \\ \hdashline[2pt/2pt] 0 & 0 & 0 & \multicolumn{2}{c}{\multirow{2}{*}{$D_{m-2}$}} \\ \vdots & 0 & a & & \end{array}\right| = b \, p_{m-1}(z) + a^{2} \, r_{m-1}(z),
	\end{align*}
	where we use the cofactor expansion of the determinant. Similarly, for $p_{m}(z)$ we obtain
	\begin{align*}
		p_{m}(z) = z^{2} \, p_{m-1}(z) - b \, r_{m}(z) = (z^{2} - b^{2}) \, p_{m-1}(z) - a^{2} b \, r_{m-1}(z).
	\end{align*}
	We can then rearrange this relation to give an expression for $r_{m-1}(z)$ in terms of $p_{m}(z)$ and $p_{m-1}(z)$. Substituting this into the recurrence for $r_{m}(z)$ above, along with the equivalent expression for $r_{m}(z)$, yields the desired recurrence relation
	\begin{align}
		\label{ThreeTermRecurrence}
		p_{m+1}(z) & = (z^{2} - b^{2} + a^{2}) \, p_{m}(z) - a^{2}z^{2} \, p_{m-1}(z),
	\end{align}
	where $A(z) \vcentcolon= a^2 z^2$ and $B(z) \vcentcolon= - z^2 + b^2 - a^2$. Finally, note that setting $p_{0} = 1$ is consistent with this recurrence relation and initial characteristic polynomials \eqref{FirstTwoCharacteristicPolynomials}.

	To show the equivalence of the generating function, we multiply \eqref{ThreeRecurrenceRelation-General} by $t^{m}$ and sum over $m\ge2$ before adding relevant terms to isolate $\sum_{m=0}^{\infty} p_{m}(z)t^{m}$ as follows
	\begin{align*}
		& \sum_{m=2}^{\infty} \left[p_{m}(z) + B(z) p_{m-1}(z) + A(z) p_{m-2}(z)\right]t^{m} = 0 \\
		\iff & \sum_{m=0}^{\infty} \left[1 + B(z) t + A(z) t^{2}\right]p_{m}(z)t^{m} = p_{0}(z) + \left(p_{1}(z)+B(z)p_{0}(z)\right)t \\
		\iff & \sum_{m=0}^{\infty} p_{m}(z)t^{m} = \frac{p_{0}(z) + \left(p_{1}(z)+B(z)p_{0}(z)\right)t}{1 + B(z) t + A(z) t^{2}}.
	\end{align*}
	Substituting in the appropriate values gives $D(t,z) = 1 - (z^2-b^2+a^2)t + a^2 z^2 t^2$ and $N(t,z) = 1 - a^2 t$ in our case, as required.
\end{proof}

Before continuing, we remark on the convergence of the Maclaurin series in $t$ of the generating function. Note that the Maclaurin series of any rational function (without a pole at 0) satisfies a linear recurrence relation, which can be seen by following backwards an analogous argument to that in the above proof. Moreover, the Maclaurin series is convergent (to the rational function) on the open disc centred at 0 with a radius equal to the minimum root of the denominator in absolute value; this can be discerned from a partial fractions decomposition (over $\mathbb{C}$) and noting that it is a (finite) sum of geometric series. As such, in our present case, $p_m(z)$ are precisely the coefficients in the Maclaurin series for any given $z$ since the denominator is such that 0 is never a pole of the generating function and so there is always a non-trivial disc where the series converges.

We now introduce a useful tool that will help us to characterise the spectrum of \eqref{NonHermitianBlockToeplitzStructure}: the $q$-analogue of the discriminant known as the $q$-discriminant \cite{Tran:2014:CBD}. The $q$-discriminant of a polynomial $P_{n}(t)$ of degree $n$ with leading coefficient $p$ is defined as
\begin{align}
	\label{q-discriminant}
	\mathrm{Disc}_{t}(P_{n};q) = p^{2n-2} q^{n(n-1)/2} \prod_{1 \le i < j \le n} (q^{-1/2}t_{i} - q^{1/2}t_{j})(q^{1/2}t_{i} - q^{-1/2}t_{j}),
\end{align}
where $t_{i}$, $1 \le i \le n$, are the roots of $P_{n}(t)$. A key point is that the $q$-discriminant is zero if and only if a quotient of roots $t_{i}/t_{j}$ equals $q$. Note that as $q\rightarrow1$ the $q$-discriminant becomes the standard discriminant of a polynomial.

In particular, we will consider the $q$-discriminant of the denominator $D(t,z)$ as a quadratic in $t$. Direct calculation using the quadratic formula yields
\begin{align}
	\label{q-discriminant-of-D}
	\mathrm{Disc}_{t}(D(t,z);q) = q \left(B(z)^{2} - (q + q^{-1} + 2)A(z)\right),
\end{align}
for any $q\neq0$. If $q$ is a quotient of the two roots in $t$ of $D(t,z)$ then \eqref{q-discriminant-of-D} is zero and so $q$ must satisfy
\begin{align}
	\label{DefiningCurve}
	\frac{B(z)^{2}}{A(z)} = q + q^{-1} + 2,
\end{align}
where, in general, $q$ will depend on $z$. The $q$-discriminant condition \eqref{DefiningCurve} for $D(t,z)$ will be crucial in what follows since it will allow us to characterise roots of $p_m(z)$ in terms of the quotient $q$. We now state our main result on the limiting spectrum of $\mathcal{T}$ as its dimension becomes large in which we adapt some ideas from \cite{Tran:2014:CBD} for finding roots of polynomials verifying a three-term recurrence but now with a different generating function.

\begin{theorem}[Limiting spectrum]
	\label{theorem:LimitingSpectrum}
	The limiting spectrum, as $m\rightarrow\infty$, of the block Toeplitz matrix $\mathcal{T} \in \mathbb{C}^{2m\times2m}$, defined in \eqref{NonHermitianBlockToeplitzStructure}, lies on the curve defined by
	\begin{align}
		\label{LimitingCurve}
		\lambda_{\pm}(\theta) &= a \cos(\theta) \pm \sqrt{b^2 - a^2 \sin^2(\theta)}, & & \theta \in [-\pi,\pi],
	\end{align}
	except perhaps for the eigenvalues
	\begin{align}
		\label{LimitingPoints}
		\lambda = \pm \sqrt{\tfrac{1}{2}b^{2} - a^{2}},
	\end{align}
	which can only occur if $|a^{2}|>|\frac{1}{2}b^{2} - a^{2}|$.
\end{theorem}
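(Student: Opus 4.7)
The plan is to combine the closed-form solution of the three-term recurrence from \Cref{lemma:ThreeTermRecurrence} with the $q$-discriminant identity already highlighted. Since the characteristic equation $\tau^2 + B(z)\tau + A(z) = 0$ of the recurrence has roots $\tau_1(z), \tau_2(z)$ with $\tau_1+\tau_2 = z^2-b^2+a^2$ and $\tau_1\tau_2 = a^2z^2$, the general solution takes the form $p_m(z) = \alpha(z)\tau_1(z)^m + \beta(z)\tau_2(z)^m$. Matching the initial conditions $p_0=1$, $p_1 = z^2-b^2$ determines $\alpha = (\tau_1-a^2)/(\tau_1-\tau_2)$ and $\beta = (a^2-\tau_2)/(\tau_1-\tau_2)$, so that the equation $p_m(z)=0$ reduces to $(\tau_1/\tau_2)^m = (a^2-\tau_2)/(a^2-\tau_1)$ away from the diagonal $\tau_1=\tau_2$.

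The first part of the theorem (the curve) follows by setting $q = \tau_1/\tau_2 = e^{2i\theta}$ on the unit circle. By \eqref{q-discriminant-of-D}--\eqref{DefiningCurve}, this is equivalent to $B^2/A = q+q^{-1}+2 = 4\cos^2\theta$, which unfolds into the quadratic $(z^2-b^2+a^2)^2 = 4a^2z^2\cos^2\theta$ and solves directly to $\lambda_\pm(\theta) = a\cos\theta \pm \sqrt{b^2 - a^2\sin^2\theta}$. To argue that the whole curve is realised as a limit set of eigenvalues, I would rewrite $p_m$ in the Chebyshev-like form $a(az)^{m-1}\bigl[z\,U_m(\cos\phi) - a\,U_{m-1}(\cos\phi)\bigr]$, with $\cos\phi = (z^2-b^2+a^2)/(2az)$, and use the equidistribution of the zeros of $zU_m-aU_{m-1}$ as $m\to\infty$.

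For the possible off-curve limit eigenvalues $z^*$ with $|\tau_1(z^*)| \neq |\tau_2(z^*)|$, the left-hand side of the root equation $(\tau_1/\tau_2)^m = (a^2-\tau_2)/(a^2-\tau_1)$ diverges or vanishes geometrically with $m$, so the right-hand side must do the same at $z^*$. The naive requirement $\tau_i = a^2$ reduces via the characteristic equation to $a^2 b^2 = 0$, which is ruled out, leaving the symmetric algebraic condition $(\tau_1+a^2)(\tau_2+a^2) = 0$ as the only remaining candidate; equivalently, $-a^2$ must itself be a root of $\tau^2+B(z)\tau+A(z)$. Using the Vieta relations this expands to $2a^2 z^2 - a^2 b^2 + 2a^4 = 0$, giving exactly $z^2 = \tfrac12 b^2 - a^2$, hence $z = \pm\sqrt{\tfrac12 b^2 - a^2}$. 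At these candidate points we have $\tau_1 = -a^2$ and $\tau_2 = a^2 - \tfrac12 b^2$, and for the exceptional root $-a^2$ to dominate asymptotically (so that the two candidates can act as simple limit eigenvalues) we require $|a^2| > |\tfrac12 b^2 - a^2|$, matching the theorem's hypothesis.

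The hardest step will be Step 3: rigorously justifying that $(\tau_1+a^2)(\tau_2+a^2)=0$ is the precise exceptional criterion compatible with our generating function $N(t,z) = 1 - a^2 t$, rather than some alternative algebraic condition, and ruling out any further source of off-curve accumulation. I would tackle this via a careful asymptotic expansion of $p_m(z)$ around each candidate point, comparing the two exponential contributions and tracking the residues in the partial-fraction decomposition of $N(t,z)/D(t,z)$ provided by \Cref{lemma:ThreeTermRecurrence}, adapting the Tran-type arguments from \cite{Tran:2014:CBD} to the non-self-adjoint setting at hand.
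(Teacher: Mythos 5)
Your Steps 1 and 2 are correct and take a genuinely different route from the paper: you solve the recurrence of \Cref{lemma:ThreeTermRecurrence} in closed form, $p_m = \alpha\tau_1^m + \beta\tau_2^m$ with $\alpha = (\tau_1-a^2)/(\tau_1-\tau_2)$ and $\beta=(a^2-\tau_2)/(\tau_1-\tau_2)$, whereas the paper expands the generating function, reduces everything to the auxiliary polynomial $f_m(q)$ in \eqref{q-Polynomial} and locates its roots with a Rouch\'e-type count in four cases according to the behaviour of $c_m$. The two formulations are equivalent (the paper's $t_i$ are $1/\tau_i$ and its $q$ is your $\tau_2/\tau_1$), and your derivation of the curve \eqref{LimitingCurve} from $|\tau_1/\tau_2|=1$ together with \eqref{DefiningCurve} matches the paper's closing computation; note only that the theorem asserts containment in the curve, so the Chebyshev equidistribution claim is not needed.

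The gap is in Step 3. From your own root equation $(\tau_1/\tau_2)^m = (a^2-\tau_2)/(a^2-\tau_1)$, the only way the right-hand side can blow up or vanish is $\tau_i = a^2$, which you correctly rule out since it forces $a^2b^2=0$; nothing in your setup produces the condition $(\tau_1+a^2)(\tau_2+a^2)=0$, and declaring it ``the only remaining candidate'' is reverse-engineering the statement rather than a derivation --- it contradicts the sentence before it. In fact, since $(a^2-\tau_1)(a^2-\tau_2) = a^4 + B(z)a^2 + A(z) = a^2b^2$ identically in $z$, your right-hand side equals $(a^2-\tau_2)^2/(a^2b^2)$, which is bounded on any bounded set of $z$ bounded away from the equimodular curve; combined with the uniform bound $|z|\le|a|+|b|$ on the eigenvalues of $\mathcal{T}$, this shows there are \emph{no} off-curve limit points at all (equivalently, Beraha--Kahane--Weiss applies because the coefficient of the dominant $\tau_i^m$ never vanishes). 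That conclusion is stronger than, and consistent with, the theorem's ``except perhaps'' clause. The reason the paper cannot exclude the points \eqref{LimitingPoints} is that its proof squares the root equation via $t_2 = \pm(A(z_m)q)^{-1/2}$ to obtain \eqref{q-Polynomial}, and the spurious sign is precisely what converts the impossible condition $\tau = a^2$ into $\tau=-a^2$, i.e.\ $z^2 = \tfrac12 b^2 - a^2$; your unsquared formulation never performs this step, so the condition cannot appear. To turn your proposal into a proof you should replace the asserted criterion by the boundedness argument above (or by BKW), and separately treat the degenerate cases $z_m\to0$, $\tau_1=\tau_2$ (the branch points $z=\pm(a\pm b)$, which lie on the curve) and subsequences of roots with no limit, as the paper does at the start of its proof.
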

\begin{proof}
	Suppose that $z_{m}$ is a root of the characteristic polynomial $p_{m}(z)$ for $m\ge2$. If $z_{m}=0$ then we must have that $a^{2} = b^{2}$. To see this, assume for a contradiction that $a^{2} \neq b^{2}$, then $B(0)\neq0$ while $A(0)=0$ and $p_{m}(0)=0$ and thus the recurrence relation \eqref{ThreeRecurrenceRelation-General} gives that $p_{m-1}(0)=0$. Following this recursion down to $m=2$ gives that $p_{1}(0)=0$, which is false as $b\neq0$. Further, if $p_{m}(0) = 0$ then also $p_{m+1}(0) = 0$ by \eqref{ThreeRecurrenceRelation-General} since $A(0)=0$ and so a sequence of zero roots occurs as $m$ increases giving $0$ in the limiting spectrum. This case is covered by choosing $\theta = \frac{\pi}{2}$ in \eqref{LimitingCurve} and noting that $a^{2} = b^{2}$ must hold. As such, for the remainder of the proof we assume that $z_{m}\neq0$.

	Now consider the denominator $D(t,z_{m})$. Since $A(z_{m})\neq0$ by the assumption that $z_{m}\neq0$, the denominator as a quadratic in $t$ has two roots $t_{1}$ and $t_{2}$. Note that, by Vieta's formula for the product of roots, neither of these two roots can be zero since $t_{1}t_{2}A(z_{m}) = 1$. If $t_{1}=t_{2}$ then the (standard) discriminant of $D(t,z_{m})$ is zero, giving $B(z_{m})^2-4A(z_{m})=0$. Solving for $z_{m}$ given our expressions for $A(z)$ and $B(z)$ yields solutions $z_{m} = \pm(a \pm b)$ for all choices of signs. These cases are also covered by \eqref{LimitingCurve} when $\theta=0$ or $\theta=\pi$.

	As such, we now assume that $t_{1} \neq t_{2}$ and so $D(t,z_{m})=A(z_{m})(t-t_{1})(t-t_{2})$. Considering the generating function \eqref{GeneratingFunctionDefinition} we observe that
	\begin{align}
		\nonumber
		\frac{N(t,z_{m})}{D(t,z_{m})} & = \frac{1-a^{2}t}{A(z_{m})(t-t_{1})(t-t_{2})} = \frac{1-a^{2}t}{A(z_{m})(t_{1}-t_{2})}\left(\frac{1}{t-t_{1}} - \frac{1}{t-t_{2}}\right) \\
		\nonumber
		& = \frac{1-a^{2}t}{A(z_{m})(t_{1}-t_{2})} \sum_{m=0}^{\infty} \frac{t_{1}^{m+1}-t_{2}^{m+1}}{t_{1}^{m+1}t_{2}^{m+1}} t^{m} \\
		\label{ExpansionOfGeneratingFunction}
		& = \frac{1}{A(z_{m})(t_{1}-t_{2})} \sum_{m=1}^{\infty} \left[ \frac{t_{1}^{m+1}-t_{2}^{m+1}}{t_{1}^{m+1}t_{2}^{m+1}} - a^{2} \frac{t_{1}^{m}-t_{2}^{m}}{t_{1}^{m}t_{2}^{m}} \right] t^{m} + 1.
	\end{align}
	The sum introduced in the second line is the Maclaurin series in $t$ and, as the difference of two geometric series, is convergent in the open disc $|t| < \min\{|t_{1}|,|t_{2}|\}$. Note that this is non-trivial since neither $t_{1}$ or $t_{2}$ are zero. In \eqref{ExpansionOfGeneratingFunction} we identify that the coefficient of $t^{m}$ is exactly $p_{m}(z_{m})$. Thus, as $z_{m}$ is a root of $p_{m}(z)$, the coefficient of $t^{m}$ in \eqref{ExpansionOfGeneratingFunction} must be zero. Now suppose $t_{1}=qt_{2}$ for some quotient $q\neq0$ (as neither $t_{1}$ nor $t_{2}$ is zero), then this condition on the coefficient of $t^{m}$ translates into
	\begin{align*}
		\frac{q^{m+1}-1}{q^{m+1}t_{2}^{m+1}} - a^{2} \frac{q^{m}-1}{q^{m}t_{2}^{m}} = 0 \implies q^{m+1} - 1 = a^{2}t_{2}q(q^{m}-1).
	\end{align*}
	Since $t_{1}t_{2}A(z_{m}) = 1$, we deduce that $t_{2} = \pm (A(z_{m})q)^{-1/2}$ and thus $q$ must solve
	\begin{align*}
		\left(q^{m+1} - 1\right)^{2} = \frac{a^4}{A(z_{m})} \left(q^{m} - 1\right)^{2}q.
	\end{align*}
	Let us define the coefficient, depending on $z_{m}$,
	\begin{align}
		\label{q-PolynomialCoeffient}
		c_{m} = \frac{a^4}{A(z_{m})} = \frac{a^2}{z_{m}^2}.
	\end{align}
	Then $q$ must be a root of the $2m+2$ degree polynomial
	\begin{align}
		\label{q-Polynomial}
		f_{m}(q) = q^{2m+2} - c_{m}q^{2m+1} + 2(c_{m}-1)q^{m+1} - c_{m}q + 1.
	\end{align}
	In order to characterise the roots of \eqref{q-Polynomial} we will make use of the following corollary of Rouch\'e's theorem (see, e.g., \cite[Section 5.3.2]{Krantz99}): for a polynomial $f$ of degree $d$ with coefficients $\left\lbrace\alpha_{j}\right\rbrace_{j=0}^{d}$, if $R>0$ is such that for an integer $0 \le k \le d$ we have
	\begin{align}
		\label{RoucheCondition}
		|\alpha_{0}| + \ldots + |\alpha_{k-1}|R^{k-1} + |\alpha_{k+1}|R^{k+1} + \ldots + |\alpha_{d}|R^{d} < |\alpha_{k}|R^{k},
	\end{align}
	then there are exactly $k$ roots of $f$, counted with multiplicity, having absolute value less than $R$. In particular, we will use this result for the polynomial $f_{m}(q)$ with $k=0$, $k = 2m+1$ or $k=2m+2$.

	We first point out some facts about \eqref{q-Polynomial}. Note that $q=0$ is not a root of $f_{m}$. Moreover, by symmetry of the coefficients, we have (for $q \neq 0$)
	\begin{align}
		\label{q^-1-Polynomial}
		f_m(q^{-1}) = q^{-(2m+2)} f_{m}(q).
	\end{align}
	Thus, if $q_{m}$ is a root of $f_{m}$ then $q_{m}^{-1}$ is also a root. Further, since $f_m$ has a unique factorisation in $\mathbb{C}$, applying this both in the variable $q^{-1}$ and $q$ in \eqref{q^-1-Polynomial} shows that the multiplicities of the roots $q_{m}$ and $q_{m}^{-1}$ must be identical. This means that we only need to study roots with $|q_{m}|\le1$, with roots outside the unit disc being precisely the reciprocal values of those inside the unit disc, or vice versa.

	We will use \eqref{RoucheCondition} to determine how many roots of $f_{m}(q)$ in \eqref{q-Polynomial} do not approach the unit circle as $m \to \infty$. This information, along with \eqref{DefiningCurve}, will allow us to determine conditions for $z_m$. A significant challenge is that the coefficient $c_m$ depends on $m$ and so we will need to consider several cases. To proceed, we let $\varepsilon>0$ be small. We will show that for all $m \ge M$, for a suitable $M(\varepsilon)$, all but potentially two roots of $f_{m}(q)$ lie in an annulus which shrinks to the unit circle as $\varepsilon \rightarrow 0$. The remaining two roots can only persist if $|c_m| > 1$ and, should they exist, consist of a root $s_m$ close to $c_m^{-1}$ and the corresponding reciprocal root outside the unit circle. Given $\varepsilon>0$, for $m \ge M$ we consider three cases depending on $c_m$:
	\begin{enumerate}
		\item $|c_{m}| \le 1$,
		\label{Case1}
		\item $1 \le |c_{m}| \le (1+\varepsilon)^2$,
		\label{Case2}
		\item $|c_{m}| \ge (1+\varepsilon)^2$.
		\label{Case3}
	\end{enumerate}

	\paragraph{Case 1} To start the analysis we suppose that we are in case~\ref{Case1} so that $|c_{m}| \le 1$ and define $R_{-} = 1-\varepsilon$. Let $M_{1}$ be such that
	\begin{align*}
		4 R_{-}^{m+1} + R_{-}^{2m+1} + R_{-}^{2m+2} < \varepsilon
	\end{align*}
	for all $m \ge M_{1}$. Such an $M_{1}$ exists since $|R_{-}| < 1$. Then, for $m \ge M_{1}$, we have that
	\begin{align*}
		|c_{m}|R_{-} + 2|c_{m}-1|R_{-}^{m+1} + |c_{m}|R_{-}^{2m+1} + R_{-}^{2m+2} & \le R_{-} + 4 R_{-}^{m+1} + R_{-}^{2m+1}+ R_{-}^{2m+2} \\
		& < 1.
	\end{align*}
	Thus, for large enough $m$, by using $k=0$ in the corollary of Rouch\'e's theorem we deduce that there are no roots of $f_{m}$ with modulus less than $R_{-} = 1-\varepsilon$. In this case, by the reciprocal nature of the roots, for $m \ge M_{1}$ we conclude that all $2m+2$ roots $q_m$ of $f_m$ lie in the annulus
	\begin{align}
		\label{annulus-case-1}
		1 - \varepsilon \le |q_m| \le \frac{1}{1-\varepsilon}.
	\end{align}

	\paragraph{Case 2} We now turn to the analysis of case~\ref{Case2} where $1 \le |c_{m}| \le (1+\varepsilon)^2$. To aid in the next case we first relax this condition to consider $|c_{m}| \ge 1$ and prove a useful bound for all roots of $f_m$. Define $R_{+} = 1 + \varepsilon$ and let $M_2$ be such that
	\begin{align*}
		R_{+}^{-(2m+1)} + R_{+}^{-2m} + 4 R_{+}^{-m}  < \frac{\varepsilon}{1+\varepsilon}
	\end{align*}
	for all $m \ge M_{2}$. Now let $R_{\top} = |c_m| (1+\varepsilon) = |c_m| R_{+}$. We will want to show that
	\begin{align}
		\label{Rtopbound}
		1 + |c_{m}|R_{\top} + 2|c_{m}-1|R_{\top}^{m+1} + |c_{m}|R_{\top}^{2m+1} < R_{\top}^{2m+2},
	\end{align}
	in order to apply the corollary of Rouch\'e's theorem with $k = 2m+2$. To do so we consider dividing by $R_{\top}^{2m+2}$, in which case, for $m \ge M_{2}$, we have
	\begin{align*}
		& R_{\top}^{-(2m+2)} + |c_{m}|R_{\top}^{-(2m+1)} + 2|c_{m}-1|R_{\top}^{-(m+1)} + |c_{m}|R_{\top}^{-1} \\
		& = |c_{m}|^{-(2m+2)} R_{+}^{-(2m+2)} + |c_{m}|^{-2m} R_{+}^{-(2m+1)} + \frac{2|c_{m}-1|}{|c_m|} |c_{m}|^{-m} R_{+}^{-(m+1)} + R_{+}^{-1} \\
		& \le R_{+}^{-(2m+2)} + R_{+}^{-(2m+1)} + 4 R_{+}^{-(m+1)} + R_{+}^{-1} \\
		& < \frac{\varepsilon}{(1+\varepsilon)^2} + \frac{1}{1+\varepsilon} < 1.
	\end{align*}
	Thus we have the required inequality and deduce from the corollary of Rouch\'e's theorem that all $2m+2$ roots $q_m$ lie in the disc given by $|q_m| < |c_m| (1+\varepsilon)$. This will prove useful later in case~\ref{Case3}. For now we turn back to case~\ref{Case2} where $1 \le |c_{m}| \le (1+\varepsilon)^2$. Using this upper bound on $|c_{m}|$ and the reciprocal nature of the roots, we conclude that, for $m \ge M_{2}$, all $2m+2$ roots $q_m$ of $f_m$ lie in the annulus
	\begin{align}
		\label{annulus-case-2}
		\frac{1}{(1 + \varepsilon)^3} < |q_m| < (1+\varepsilon)^3.
	\end{align}

	\paragraph{Case 3} Finally, consider case~\ref{Case3} where $|c_{m}| \ge (1+\varepsilon)^2$. Let $R_{+} = 1 + \varepsilon$ and $M_2$ be as defined in case~\ref{Case2}. We will want to show that
	\begin{align}
		\label{Rplusbound}
		1 + |c_{m}|R_{+} + 2|c_{m}-1|R_{+}^{m+1} + R_{+}^{2m+2} < |c_{m}|R_{+}^{2m+1},
	\end{align}
	in order to apply the corollary of Rouch\'e's theorem with $k = 2m+1$. To do so we consider dividing by $|c_{m}|R_{\top}^{2m+1}$, in which case, for $m \ge M_{2}$, we have
	\begin{align*}
		& |c_{m}|^{-1} R_{+}^{-(2m+1)} + R_{+}^{-2m} + \frac{2|c_{m}-1|}{|c_m|} R_{+}^{-m} + |c_{m}|^{-1} R_{+} \\
		& \le R_{+}^{-(2m+1)} + R_{+}^{-2m} + 4 R_{+}^{-m} + (1+\varepsilon)^{-2} R_{+} \\
		& < \frac{\varepsilon}{1+\varepsilon} + \frac{1}{1+\varepsilon} = 1.
	\end{align*}
	Thus we have the required inequality and deduce from the corollary of Rouch\'e's theorem that $2m+1$ roots $q_m$ lie in the disc given by $|q_m| < 1+\varepsilon$. In this case, by the reciprocal nature of the roots, for $m \ge M_{2}$ we conclude that $2m$ roots $q_m$ of $f_m$ lie in the annulus
	\begin{align}
		\label{annulus-case-3}
		\frac{1}{1+\varepsilon} < |q_m| < 1 + \varepsilon.
	\end{align}

	We pause to note at this stage that, combining all three cases, we have just shown that all but potentially two roots of $f_m$ lie in a small annulus around the unit circle for $m \ge M = \max \lbrace M_{1}, M_{2} \rbrace$, independently of the value of $c_m$. In particular, this will be the largest annulus of the three cases which, for small $\varepsilon>0$, is that in \eqref{annulus-case-2}. Letting $\varepsilon\rightarrow0$ we deduce that all but potentially two roots of $f_m$ must tend to the unit circle as $m\rightarrow\infty$.

	The remaining question is what happens to the other two roots, which only appear in case~\ref{Case3}. We know from the bound in \eqref{Rtopbound} that, for large enough $m$, all roots satisfy $|q_m| < |c_m| (1+\varepsilon)$ while all but one satisfy $|q_m| < (1+\varepsilon)$. We now show that the remaining root in case~\ref{Case3} satisfies $|q_m| \ge |c_m| (1-\varepsilon)$ for large enough $m$. To do so let $R_{\bot} = |c_m|(1-\varepsilon) = |c_m| R_{-}$ and note that, assuming $\varepsilon$ is small enough ($\varepsilon<\frac{1}{3}$ suffices), then $R_{\bot} > 1$ since
	\begin{align*}
		R_{\bot} = |c_m|(1-\varepsilon) \ge (1+\varepsilon)^2 (1-\varepsilon) \ge 1 + \frac{\varepsilon}{2}.
	\end{align*}
	Now let $M_{3}$ be such that
	\begin{align*}
		\left(1+\frac{\varepsilon}{2}\right)^{-(2m+1)} + \left(1+\frac{\varepsilon}{2}\right)^{-2m} + 4 \left(1+\frac{\varepsilon}{2}\right)^{-m}  < \varepsilon
	\end{align*}
	for all $m \ge M_{3}$. We will want to show an identical bound to \eqref{Rplusbound} holds but now for $R_{\bot}$ in order to again use the corollary of Rouch\'e's theorem with $k = 2m+1$. We proceed in a similar manner and consider dividing by $|c_m|R_{\bot}^{2m+1}$, so that for $m \ge M_{3}$ we have
	\begin{align*}
		& |c_{m}|^{-1} R_{\bot}^{-(2m+1)} + R_{\bot}^{-2m} + \frac{2|c_{m}-1|}{|c_m|} R_{\bot}^{-m} + |c_{m}|^{-1} R_{\bot} \\
		& \le R_{\bot}^{-(2m+1)} + R_{\bot}^{-2m} + 4 R_{\bot}^{-m} + R_{-} \\
		& \le \left(1+\frac{\varepsilon}{2}\right)^{-(2m+1)} + \left(1+\frac{\varepsilon}{2}\right)^{-2m} + 4 \left(1+\frac{\varepsilon}{2}\right)^{-m} +  R_{-} \\
		& < 1.
	\end{align*}
	Thus we have the required inequality and deduce from the corollary of Rouch\'e's theorem that $2m+1$ roots $q_m$ lie in the disc given by $|q_m| < |c_m|(1-\varepsilon)$. Thus, for large enough $m$, we conclude that the single remaining root lies in the annulus $|c_m|(1-\varepsilon) \le |q_m| < |c_m|(1+\varepsilon)$.

	This result makes it clear that roots which do not tend to the unit circle persist only when we have $|c_m|$ values which stay bounded away from $1$ as $m\rightarrow\infty$, and their size is dictated by $c_m$. That is, for such roots to persist there must exist an infinite subsequence with $|c_m| > c > 1$ for some fixed $c$ and so we now assume this condition. We further focus on the reciprocal root which is inside the unit circle and show that it approximates $c_m^{-1}$ for large $m$. Define this single root to be $s_m$ and note, through the reciprocal nature of roots, we have just shown that it satisfies the bound $|s_m| \le |c_m^{-1}|\frac{1}{1-\varepsilon}$, which in turn gives that $|c_m s_m| \le \frac{1}{1-\varepsilon}$. Moreover, $|c_m| > c$ yields the bound $|s_m| \le c^{-1}\frac{1}{1-\varepsilon}$, where $c > 1$ is fixed, and thus choosing $\varepsilon > 0$ small enough we have $|s_m| < r < 1$ for a fixed $r$. This provides the ingredients for the following limit:
	\begin{align*}
		& |s_{m}^{2m+2} - c_{m} s_{m}^{2m+1} + 2(c_{m}-1) s_{m}^{m+1}| \\
		& \le |s_{m}|^{2m+2} + \frac{1}{1-\varepsilon} |s_{m}|^{2m} + 2|s_{m}|^{m+1} + \frac{2}{1-\varepsilon} |s_{m}|^{m} \rightarrow 0
	\end{align*}
	as $m\rightarrow\infty$, since $|s_m| < r < 1$. Now, by definition of $s_m$ as a root of $f_m$, we have that $f_m(s_m) = 0$ and hence we must have that $1 - c_m s_m \rightarrow 0$ and thus $s_m - c_m^{-1} \rightarrow 0$ as $m\rightarrow\infty$, due to $|c_m^{-1}|$ being bounded above by $c^{-1} < 1$. This says that the root which stays inside the unit circle approximates $c_{m}^{-1}$ for large $m$ while the root which stays outside the unit circle must approximate $c_m$ by reciprocal.

	We would now like to interpret what this shows for the potential corresponding root $z_{m}$ in the limit $m\rightarrow\infty$ using the $q$-discriminant condition \eqref{DefiningCurve}. For this we use the definition of the coefficient $c_{m} = {a^{2}}/{z_{m}^{2}}$ from \eqref{q-PolynomialCoeffient} and denote $\delta_{m} = c_{m}s_{m}-1$ where $\delta_{m}\rightarrow0$ as $m\rightarrow\infty$. Then, with $q=s_{m}=c_{m}^{-1}(1+\delta_{m})$, \eqref{DefiningCurve} becomes
	\begin{align}
		\nonumber
		& \frac{B(z_{m})^{2}}{A(z_{m})} = c_{m}^{-1}(1+\delta_{m}) + c_{m}(1+\delta_{m})^{-1} + 2 \\
		\nonumber
		\implies & \frac{(- z_{m}^2 + b^2 - a^2)^{2}}{a^{2}z_{m}^{2}} = \frac{z_{m}^{2}}{a^{2}}(1+\delta_{m}) + \frac{a^{2}}{z_{m}^{2}}(1+\delta_{m})^{-1} + 2 \\
		\label{SingularlyPerturbedEquationForz0}
		\implies & b^{4} - 2 a^2 b^{2} - 2 b^{2} z_{m}^2 = \delta_{m} (z_{m}^{4}-a^4) + \mathcal{O}(\delta_{m}^{2}),
	\end{align}
	where we have used the binomial expansion $(1+\delta_{m})^{-1}=1-\delta_m + \mathcal{O}(\delta_{m}^{2})$, which is valid for large $m$ since $\delta_m \to 0$. Recall that, given we are in case~\ref{Case3}, $|c_m|$ is bounded below away from zero and so $|z_m|$ is bounded above for all $m$. Now note that \eqref{SingularlyPerturbedEquationForz0} is a singular perturbation \cite[Section 7.2]{Bender99} and as $\delta_{m}\rightarrow0$ all possible solutions for $z_m$ go to infinity except for those which satisfy the left-hand side being zero. As such, the only possibility for any $z_m$ being a true root of the characteristic polynomial is that they tend to one of the limiting roots
	\begin{align}
		\label{Limiting-z-different-case}
		z = \pm \sqrt{\tfrac{1}{2}b^{2} - a^{2}}.
	\end{align}
	Note that for such $z_m$ to exist we required the condition $|c_{m}|>1$, and so $|a^{2}|>|z_{m}^{2}|$, to hold for arbitrarily large $m$. For this to hold in the limit we require $|a^{2}|>|\frac{1}{2}b^{2} - a^{2}|$ and so the limiting roots in \eqref{Limiting-z-different-case} may only exist when this condition is met.

	We have now seen that, aside from the special case yielding the potential for limiting roots \eqref{Limiting-z-different-case}, all remaining $z_m$ correspond to $q_m$ values which tend to the unit circle. To complete the proof we now translate this result using the $q$-discriminant condition \eqref{DefiningCurve}. Since $q_m$ tends to the unit circle, the corresponding $z_{m}$ must tend to the limiting curve defined by \eqref{DefiningCurve} where $q = e^{i\phi}$ for some $\phi \in [-\pi,\pi]$. This limiting curve in the complex plane is given parametrically as
	\begin{align*}
		& \frac{B(z)^{2}}{A(z)} = e^{i\phi} + e^{-i\phi} + 2, & & \phi \in [-\pi,\pi] \\
		\iff & \frac{(- z^2 + b^2 - a^2)^{2}}{a^{2}z^{2}} = 4 \cos^{2}\left(\frac{\phi}{2}\right), & & \phi \in [-\pi,\pi] \\
		\iff & z^2 - b^2 + a^2 = \pm 2 az \cos\left(\frac{\phi}{2}\right), & & \phi \in [-\pi,\pi] \\
		\iff & z^2 - 2 a \cos(\theta) z - b^2 + a^2 = 0, & & \theta \in [-\pi,\pi] \\
		\iff & z = a \cos(\theta) \pm \sqrt{b^2 - a^{2} \sin^{2}(\theta)}, & & \theta \in [-\pi,\pi].
	\end{align*}
	Thus, as roots $z_{m}$ of $p_{m}(z)$ are eigenvalues $\lambda$ of $\mathcal{T}\in\mathbb{C}^{2m\times2m}$, we deduce that the limiting spectrum of $\mathcal{T}$ must lie on the curve defined by \eqref{LimitingCurve} as $m\rightarrow\infty$, except perhaps for the eigenvalues in \eqref{LimitingPoints} which can only occur if $|a^{2}|>|\frac{1}{2}b^{2} - a^{2}|$.
\end{proof}

We note that, while the so-called Szeg\H{o} formula does not apply in our non-Hermitian case, we have just proven that the limiting eigenvalues of $\mathcal{T}$, except perhaps two, lie on the equivalent curve defined by eigenvalues of the (block) symbol of $\mathcal{T}$, which is precisely that defined in \eqref{LimitingCurve}.

\section{The one-dimensional problem}
\label{sec:1dcase}

We now turn our attention to analysing the one-level method. In this section we study the parallel Schwarz iterative method for the one-dimensional Maxwell's equations with Robin boundary conditions defined on the domain \mbox{$\Omega = (a_1,b_N)$}:
\begin{align}\label{eq:1d0}
	\left\{\begin{array}{r@{}ll}
		{\cal L}u \vcentcolon= -\partial_{xx}u + (ik\tilde \sigma - k^2)u &{}= 0, & x \in (a_1,b_N),\\
		{\cal B}_lu \vcentcolon= -\partial_x u + \alpha u &{}= g_1, & x = a_1,\\
		{\cal B}_ru \vcentcolon= \partial_x u + \alpha u &{}= g_2, & x = b_N,
	\end{array}\right.
\end{align}
where $u$ represents the complex amplitude of the electric field, $k$ is the wave number, and $\tilde\sigma = \sigma Z$ with $\sigma$ being the conductivity of the medium and $Z$ its impedance. Here $\alpha$ is the impedance parameter which is chosen such that the local problems are well-posed and is classically set to $ik$, in which case the problem corresponds to a {\it ``one-dimensional wave-guide''} and the incoming wave or excitation can be represented by $g_1$, for example, with $g_2$ being set to $0$. Note that, when $\alpha = ik$, the problem is well-posed even if $\tilde\sigma=0$ but in the following we will assume that $\tilde\sigma>0$. In order to simplify notation we will omit the tilde symbol for $\sigma$. We remark that \eqref{eq:1d0} can also be seen as an absorptive Helmholtz equation where the absorption term $ik\sigma$ comes from the physics of the problem.

Let us also consider two sets of points $\{a_j\}_{j=1,..,N+1}$ and $\{b_j\}_{j=0,..,N}$ defining the overlapping decomposition $\Omega = \cup_{j=1}^N \Omega_j$ such that $\Omega_{j} = (a_j,b_j)$, as illustrated in \Cref{fig:decomp1d} (and considered in \cite{Chaouqui:2018:OSC}), where
\begin{align}
	\label{eq:ab}
	b_j-a_j &= L +2\delta, & b_{j-1}-a_j &= 2\delta, & a_{j+1}-a_j &= b_{j+1}-b_j = L, & \delta &> 0.
\end{align}
Note that the length of each subdomain is fixed and equal to $L+2\delta$ while the overlap is always $2\delta$. This means that the family of problems we will consider solving consists of a growing chain of fixed-size subdomains, as in \cite{Chaouqui:2018:OSC}, rather than solving on a fixed problem domain with shrinking subdomain size.

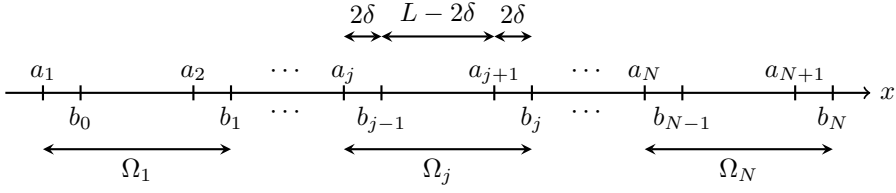
\begin{figure}[t]
	\centering
	\begin{tikzpicture}
		\draw[->,thick] (-0.5,0) -- (11,0) node[right] {$x$};
		\tikzset{>=stealth,thick}
		\foreach \p in {0,0.5,2,2.5,4,4.5,6,6.5,8,8.5,10,10.5}
		{
			\draw[thick] (\p,-0.1) -- (\p,0.1);
		}
		\draw (0,0.5) node[below] {$a_{1}$};
		\draw (2,0.5) node[below] {$a_{2}$};
		\draw (4,0.5) node[below] {$a_{j}$};
		\draw (6,0.5) node[below] {$a_{j+1}$};
		\draw (8,0.5) node[below] {$a_{N}$};
		\draw (10,0.5) node[below] {$a_{N+1}$};
		\draw (0.5,-0.05) node[below] {$b_{0}$};
		\draw (2.5,-0.05) node[below] {$b_{1}$};
		\draw (4.5,-0.05) node[below] {$b_{j-1}$};
		\draw (6.5,-0.05) node[below] {$b_{j}$};
		\draw (8.5,-0.05) node[below] {$b_{N-1}$};
		\draw (10.5,-0.05) node[below] {$b_{N}$};
		\draw (3.25,0.5) node[below] {$\cdots$};
		\draw (3.25,-0.05) node[below] {$\cdots$};
		\draw (7.25,0.5) node[below] {$\cdots$};
		\draw (7.25,-0.05) node[below] {$\cdots$};
		\draw[<->,thick] (0,-0.75) -- (2.5,-0.75);
		\draw[<->,thick] (4,-0.75) -- (6.5,-0.75);
		\draw[<->,thick] (8,-0.75) -- (10.5,-0.75);
		\draw (1.25,-0.75) node[below] {$\Omega_{1}$};
		\draw (5.25,-0.75) node[below] {$\Omega_{j}$};
		\draw (9.25,-0.75) node[below] {$\Omega_{N}$};
		\draw[<->,thick] (4,0.75) -- (4.5,0.75);
		\draw[<->,thick] (4.5,0.75) -- (6,0.75);
		\draw[<->,thick] (6,0.75) -- (6.5,0.75);
		\draw (4.25,0.8) node[above] {$2\delta$};
		\draw (5.25,0.8) node[above] {$L-2\delta$};
		\draw (6.25,0.8) node[above] {$2\delta$};
	\end{tikzpicture}
	\caption{Overlapping decomposition of the one-dimensional domain into $N$ subdomains.}
	\label{fig:decomp1d}
\end{figure}

We consider solving \eqref{eq:1d0} by a Schwarz iterative algorithm with Robin transmission conditions and denote by $u_j^n$ the approximation to the solution in subdomain $j$ at iteration $n$, starting from an initial guess $u_j^0$. We compute $u_j^n$ from the previous values $u_j^{n-1}$ by solving the following local boundary value problem
\begin{subequations}
	\label{eq:sch1dall}
	\begin{align}\label{eq:sch1d}
		\left\{\begin{array}{r@{}ll}
			{\cal L} u_j^n &{}= 0, & x \in \Omega_j,\\
			{\cal B}_l u_j^n &{}= {\cal B}_l u_{j-1}^{n-1}, & x = a_j,\\
			{\cal B}_r u_j^n &{}= {\cal B}_r u_{j+1}^{n-1}, & x = b_j,
		\end{array}\right.
	\end{align}
	in the case $2\le j \le N$ while for the first ($j=1$) and last ($j=N$) subdomain we have
	\begin{align}\label{eq:sch1d1N}
		\begin{array}{cc}
			\left\{\begin{array}{r@{}ll}
				{\cal L}u_1^n &{}= 0, & x \in \Omega_1,\\
				{\cal B}_l u_1^n &{}= g_1, & x = a_1,\\
				{\cal B}_r u_1^n &{}= {\cal B}_r u_{2}^{n-1}, & x = b_1,
			\end{array}\right.
			&
			\left\{\begin{array}{r@{}ll}
				{\cal L}u_N^n &{}= 0, & x \in \Omega_N,\\
				{\cal B}_l u_N^n &{}= {\cal B}_l u_{N-1}^{n-1}, & x = a_N,\\
				{\cal B}_r u_N^n &{}= g_2, & x = b_N.
			\end{array}\right.
		\end{array}
	\end{align}
\end{subequations}
In the following we wish to analyse the convergence of the iterative method that is defined by \eqref{eq:sch1dall}. We observe this iteration to be a parallel Schwarz method with Robin transmission conditions, a label which we shall adopt in this work. In particular, we will be interested in the convergence properties for a growing number of subdomains $N$ and the absorptive problem, i.e., $\sigma>0$.\footnote{When $\sigma=0$, impedance transmission conditions are also transparent conditions, with the resulting iteration matrix being nilpotent. Therefore, the algorithm will converge in a number of iterations equal to the number of subdomains in this case.} This means that we will consider asymptotic bounds for large $N$ and make use of the theory presented in \Cref{sec:toeplitz}.

In order to do this we define the local errors in each subdomain $j$ at iteration $n$ as $e^n_j = u|_{\Omega_j} - u^n_j$. They verify the boundary value problems \eqref{eq:sch1d} for the interior subdomains and the homogeneous analogues of \eqref{eq:sch1d1N} for the first and last subdomains (i.e., \eqref{eq:sch1d1N} but with boundary conditions $g_{1}=0$ and $g_{2}=0$). The convergence study will be done in two steps: first we prove that the Schwarz iteration matrix is a block Toeplitz matrix and then that its spectral radius remains bounded below and away from one in the limit of large $N$. As mentioned before, we build on the formalism of iteration matrices acting on interface data introduced in \cite{Chaouqui:2018:OSC}; here this will be Robin data.

\begin{lemma}[Block Toeplitz iteration matrix]
	\label{lemma:basic1d}
	If $e^n_j = u|_{\Omega_j} - u^n_j$ is the local error in each subdomain $j$ at iteration $n$ and
	\begin{align*}
		{{\cal R}^n} &\vcentcolon= \left[{\cal R}^{n}_+(b_1), \, {\cal R}^{n}_-(a_2), \, {\cal R}^{n}_+(b_2), \ldots, {\cal R}^{n}_-(a_{N-1}), \,{\cal R}^{n}_+(b_{N-1}), \, {\cal R}^{n}_-(a_N)\right]^T,
	\end{align*}
	where
	\begin{align}\label{eq:rpm}
		{\cal R}^{n}_-(a_j) &\vcentcolon= {\cal B}_l e_{j-1}^{n}(a_j), & {\cal R}^{n}_+(b_j) &\vcentcolon={\cal B}_r e_{j+1}^{n}(b_j),
	\end{align}
	is the Robin interface data, then
	\begin{align*}
		{{\cal R}^n} = {\cal T}_{1d} {{\cal R}^{n-1}},
	\end{align*}
	where ${\cal T}_{1d}$ is a block Toeplitz matrix of the form \eqref{NonHermitianBlockToeplitzStructure} with the complex coefficients $a$ and $b$ being given by
	\begin{subequations}
		\label{eq:ab0}
		\begin{align}
			a &= \frac{(\zeta + \alpha )^2 e^{2\zeta \delta} - (\zeta - \alpha)^2 e^{-2\zeta \delta}}{(\zeta + \alpha)^2 e^{\zeta (2\delta+L)} - (\zeta - \alpha)^2 e^{-\zeta (2\delta+L)}}, \\
			b &= -\frac{(\zeta^2-\alpha^2)(e^{\zeta L} - e^{-\zeta L} )}{ (\zeta + \alpha)^2 e^{\zeta (2\delta+L)} - (\zeta - \alpha)^2 e^{-\zeta (2\delta+L)}},
		\end{align}
	\end{subequations}
	where $\zeta = \sqrt{ik\sigma-k^2}$.
\end{lemma}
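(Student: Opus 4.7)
The plan is to exploit the translation invariance of the geometry -- each subdomain has fixed length $L+2\delta$ and overlap $2\delta$ -- and perform a single local calculation in a generic subdomain $\Omega_j$. First I would write the general solution of $\mathcal{L} e_j^n = 0$ as
\begin{equation*}
e_j^n(x) = A_j\, e^{\zeta(x-a_j)} + B_j\, e^{-\zeta(x-a_j)}, \qquad \zeta = \sqrt{ik\sigma - k^2},
\end{equation*}
and apply the Robin transmission conditions at $x = a_j$ and $x = b_j$ to obtain a $2\times 2$ linear system for $(A_j, B_j)$, with right-hand side $\bigl({\cal R}^{n-1}_-(a_j),\, {\cal R}^{n-1}_+(b_j)\bigr)^T$ and coefficient matrix of determinant $D = (\alpha-\zeta)^2 e^{-\zeta(L+2\delta)} - (\alpha+\zeta)^2 e^{\zeta(L+2\delta)}$.

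The second step is to evaluate, in terms of $A_j$ and $B_j$, the two new Robin traces that $e_j^n$ contributes to its neighbours at the next iteration: ${\cal R}^n_-(a_{j+1}) = {\cal B}_l e_j^n(a_{j+1})$ at the interior point $x = a_j + L$, and ${\cal R}^n_+(b_{j-1}) = {\cal B}_r e_j^n(b_{j-1})$ at $x = a_j + 2\delta$. Solving the local $2\times 2$ system for $(A_j,B_j)$ and substituting into these two trace expressions, the $r_l$-to-$r_l$ and $r_r$-to-$r_r$ coefficients both collapse, after factoring a sign out of the denominator, to
\begin{equation*}
\frac{(\zeta + \alpha)^2 e^{2\zeta\delta} - (\zeta - \alpha)^2 e^{-2\zeta\delta}}{(\zeta + \alpha)^2 e^{\zeta(2\delta+L)} - (\zeta - \alpha)^2 e^{-\zeta(2\delta+L)}} = a,
\end{equation*}
while the two cross coefficients both collapse to the scalar $b$ in \eqref{eq:ab0}. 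The equality of these two pairs of coefficients is the manifestation of the left-right reflection symmetry of the local problem and of the operators ${\cal B}_l, {\cal B}_r$.

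Grouping the interface data into pairs $R_j^n \vcentcolon= ({\cal R}^n_+(b_j),\, {\cal R}^n_-(a_{j+1}))^T$ associated with interface $j$, the two scalar identities from the previous step read
\begin{align*}
{\cal R}^n_+(b_j) &= a\,{\cal R}^{n-1}_+(b_{j+1}) + b\,{\cal R}^{n-1}_-(a_{j+1}), \\
{\cal R}^n_-(a_{j+1}) &= b\,{\cal R}^{n-1}_+(b_j) + a\,{\cal R}^{n-1}_-(a_j),
\end{align*}
which is precisely $R_j^n = A_{-1} R_{j-1}^{n-1} + A_0 R_j^{n-1} + A_1 R_{j+1}^{n-1}$ with the three blocks given in \eqref{NonHermitianBlockToeplitzStructure-Blocks}. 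Translation invariance in $j$ then yields the full block Toeplitz structure of ${\cal T}_{1d}$.

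Finally, the boundary subdomains fit the same pattern: the error has a homogeneous Robin condition on the outer boundary, so ${\cal R}^{n-1}_-(a_1) = 0$ and ${\cal R}^{n-1}_+(b_N) = 0$, and the local systems for $e_1^n$ and $e_N^n$ are simply the generic ones with one side of the right-hand side set to zero. This produces exactly the first and last block rows of the matrix \eqref{NonHermitianBlockToeplitzStructure-Matrix}, in which the blocks $A_{-1}$ and $A_1$, respectively, are absent. The main obstacle is the algebraic simplification at the heart of the second step: inverting the local $2\times 2$ system, substituting into both interior trace formulas, and rearranging so that the $r_l$-to-$r_l$/$r_r$-to-$r_r$ contributions coalesce into the single scalar $a$ and the cross contributions into the single scalar $b$, with the exact expressions demanded by \eqref{eq:ab0}.
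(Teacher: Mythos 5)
Your proposal is correct and follows essentially the same route as the paper: write the general exponential solution on a generic subdomain, invert the local $2\times 2$ Robin system (with the same determinant, up to the sign you note), evaluate the new Robin traces at the interior interface points $a_j+2\delta$ and $a_j+L$, and read off the scalars $a$ and $b$, with the end subdomains handled by the vanishing outer traces ${\cal R}^{n}_-(a_1)={\cal R}^{n}_+(b_N)=0$. The only cosmetic differences are your locally shifted basis $e^{\pm\zeta(x-a_j)}$, which makes the $j$-independence of the coefficients immediate, and your grouping of the data by interface rather than by subdomain before assembling the global matrix.
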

\begin{proof}
	We first see that the solution to ${\cal L}e_j^n=0$ is given by
	\begin{align}
		\label{eq:ajbj}
		e_j^n(x) &= \alpha_j^n e^{-\zeta x} + \beta_j^n e^{\zeta x}, & \zeta &= \sqrt{ik\sigma-k^2}.
	\end{align}
	Note that we choose the principle branch of the square root here so that $\zeta$ always has positive real and imaginary parts. Now the interface iterations at $x=a_j$ and $x=b_j$ from \eqref{eq:sch1dall} can be written in terms of the error as
	\begin{align}
		\label{eq:blbr}
		\left[\begin{array}{c} {\cal B}_l e_j^n(a_j) \\ {\cal B}_r e_j^n(b_j) \end{array}\right] = \left[\begin{array}{c} {\cal B}_l e_{j-1}^{n-1}(a_j) \\ {\cal B}_r e_{j+1}^{n-1}(b_j)\end{array}\right].
	\end{align}
	By introducing \eqref{eq:ajbj} into the left-hand side of \eqref{eq:blbr} and by using the notation from \eqref{eq:rpm} we obtain
	\begin{align*}
		\left[\begin{array}{cc}
			(\zeta + \alpha) e^{-\zeta a_j} & - (\zeta - \alpha) e^{\zeta a_j} \\
			-(\zeta - \alpha ) e^{-\zeta b_j} & (\zeta + \alpha) e^{\zeta b_j}
		\end{array}
		\right] \left[\begin{array}{c} \alpha_j^n \\ \beta_j^n \end{array}\right] = \left[\begin{array}{c} {\cal R}^{n-1}_-(a_j)\\ {\cal R}^{n-1}_+(b_j)\end{array}\right],
	\end{align*}
	which we can solve for the unknowns $\alpha_j^n$ and $\beta_j^n$ to give
	\begin{align}\label{eq:ajbj0}
		\left[\begin{array}{c} \alpha_j^n \\ \beta_j^n \end{array}\right] & = \frac{1}{D_j} \left[\begin{array}{cc}
			(\zeta+ \alpha) e^{\zeta b_j} & (\zeta - \alpha) e^{\zeta a_j} \\
			(\zeta - \alpha) e^{-\zeta b_j} & (\zeta + \alpha) e^{-\zeta a_j}
		\end{array}
		\right] \left[\begin{array}{c} {\cal R}^{n-1}_-(a_j) \\{\cal R}^{n-1}_+(b_j) \end{array}\right],
	\end{align}
	where $D_j = (\zeta + \alpha)^2 e^{\zeta (b_j-a_j)} - (\zeta - \alpha)^2 e^{\zeta (a_j-b_j)}$. Note that, since $b_j-a_j = L+2\delta$, then $D_{j}$ is actually independent of $j$ and thus we simply denote it by $D$. The algorithm is based on Robin transmission conditions, hence the quantities of interest which are transmitted at the interfaces between subdomains are the Robin data \eqref{eq:rpm}. Therefore, we need to compute the current interface values ${\cal R}^{n}_-(a_j)$ and ${\cal R}^{n}_+(b_j)$ by replacing the coefficients from \eqref{eq:ajbj0} into \eqref{eq:ajbj} and then applying the formulae in \eqref{eq:rpm}, giving
	\begin{subequations}
		\label{eq:rjajbj}
		\begin{align}
			\begin{split}
				{\cal R}^{n}_-(a_j) & = {\cal B}_l e_{j-1}^{n}(a_j) = (\zeta +\alpha) \alpha^n_{j-1}e^{-\zeta a_j} - (\zeta - \alpha) \beta^n_{j-1}e^{\zeta a_j} \\
				& = \frac{1}{D} \Bigl[ ((\zeta + \alpha)^2 e^{\zeta (b_{j-1}-a_j)} - (\zeta - \alpha)^2 e^{\zeta (a_j-b_{j-1})}){\cal R}^{n-1}_-(a_{j-1}) \Bigr.\\
				& \mathrel{\phantom{=}} \mathrel+ \Big. (\zeta^2-\alpha^2)( e^{\zeta (a_{j-1}-a_{j})}-e^{\zeta (a_{j}-a_{j-1})} ) {\cal R}^{n-1}_+(b_{j-1}) \Bigr],
			\end{split} \\
			\begin{split}
				{\cal R}^{n}_+(b_j) & = {\cal B}_r e_{j+1}^{n}(b_j) = -(\zeta -\alpha) \alpha^n_{j+1}e^{-\zeta b_j} + (\zeta +\alpha) \beta^n_{j+1}e^{\zeta b_j} \\
				& = \frac{1}{D} \Bigl[ (\zeta^2-\alpha^2)(e^{\zeta (b_{j}-b_{j+1})}-e^{\zeta (b_{j+1}-b_{j})} )
				{\cal R}^{n-1}_-(a_{j+1}) \Bigr. \\
				& \mathrel{\phantom{=}} \mathrel+ \Bigl. ((\zeta + \alpha)^2 e^{\zeta (b_j-a_{j+1})} - (\zeta - \alpha)^2 e^{\zeta (a_{j+1}-b_j)}) {\cal R}^{n-1}_+(b_{j+1}) \Bigr].
			\end{split}
		\end{align}
	\end{subequations}
	The iteration of interface values \eqref{eq:rjajbj} can be summarised as follows:
	\begin{subequations}
		\label{eq:rniterall}
		\begin{align}
			\label{eq:rniter}
			\begin{split}
				\left[\begin{array}{c} {\cal R}^{n}_-(a_j) \\ {\cal R}^{n}_+(b_j) \end{array}\right] & = T_1 \left[\begin{array}{c} {\cal R}^{n-1}_-(a_{j-1}) \\ {\cal R}^{n-1}_+(b_{j-1}) \end{array}\right] + T_2 \left[\begin{array}{c} {\cal R}^{n-1}_-(a_{j+1}) \\ {\cal R}^{n-1}_+(b_{j+1}) \end{array}\right], \\ T_1 & = \left[\begin{array}{cc} a & b \\ 0 & 0 \end{array} \right], \ T_2 = \left[\begin{array}{cc} 0 & 0 \\ b & a \end{array} \right],
			\end{split}
		\end{align}
		where $a$ and $b$ are given by \eqref{eq:ab0}. Note that since the homogeneous counterparts of the boundary conditions from \eqref{eq:sch1d1N} translate into ${\cal R}^{n}_-(a_1)=0$ and ${\cal R}^{n}_+(b_N)=0$ for all $n$, we can remove these terms. As such, the iterates for $j\in \{1,2,N-1,N\}$ are prescribed slightly differently as
		\begin{align}
			\label{eq:rniterends}
			\begin{split}
				\left[\begin{array}{c} 0 \\ {\cal R}^{n}_+(b_1) \end{array}\right] &= T_2 \left[\begin{array}{c} {\cal R}^{n-1}_-(a_{2}) \\ {\cal R}^{n-1}_+(b_{2}) \end{array}\right], \\
				\left[\begin{array}{c} {\cal R}^{n}_-(a_2) \\ {\cal R}^{n}_+(b_2) \end{array}\right] &= T_1 \left[\begin{array}{c} 0 \\{\cal R}^{n-1}_+(b_1) \end{array}\right] + T_2 \left[\begin{array}{c} {\cal R}^{n-1}_-(a_3) \\ {\cal R}^{n-1}_+(b_3) \end{array}\right], \\
				\left[\begin{array}{c} {\cal R}^{n}_-(a_{N-1}) \\ {\cal R}^{n}_+(b_{N-1}) \end{array}\right] &= T_1 \left[\begin{array}{c} {\cal R}^{n-1}_-(a_{N-2}) \\ {\cal R}^{n-1}_+(b_{N-2}) \end{array}\right] + T_2 \left[\begin{array}{c} {\cal R}^{n-1}_-(a_N) \\ 0\end{array}\right], \\
				\left[\begin{array}{c} {\cal R}^{n}_-(a_N) \\ 0 \end{array}\right] &= T_1 \left[\begin{array}{c} {\cal R}^{n-1}_-(a_{N-1}) \\ {\cal R}^{n-1}_+(b_{N-1}) \end{array}\right].
			\end{split}
		\end{align}
	\end{subequations}
	With the notation from \eqref{eq:rpm}, global iteration over interface data belonging to all subdomains becomes ${{\cal R}^n} = {\cal T}_{1d} {{\cal R}^{n-1}}$ where
	\begin{align}
		\label{eq:itert1}
		{{\cal T}_{1d}} = \left[\begin{array}{ccccccc}
			0 & \widehat T_2 & & & & & \\
			\widetilde T_1 & 0_{2\times 2} & T_2 & & & & \\
			& \ddots & \ddots & \ddots & & & \\
			& & T_1 &0_{2\times 2}& T_2 & &\\
			& & & \ddots & \ddots & \ddots & \\
			& & & & T_1 & 0_{2\times 2}& \widetilde T_2 \\
			& & & & & \widehat T_1 & 0
		\end{array}\right]
	\end{align}
	with $\widetilde T_1 = \left[\begin{array}{cc} b & 0 \end{array} \right]^T$, $\widetilde T_2 = \left[\begin{array}{cc} 0 & b \end{array} \right]^T$, $\widehat T_1 = \left[\begin{array}{cc} a & b \end{array} \right]$, $\widehat T_2 = \left[\begin{array}{cc} b & a \end{array} \right]$. We conclude from this that the parallel Schwarz algorithm is given by a stationary iteration with iteration matrix ${\cal T}_{1d}$ defined by \eqref{eq:itert1} and, therefore, convergence is determined by the spectral radius $\rho({\cal T}_{1d})$. We also notice that ${\cal T}_{1d}$ is a block Toeplitz matrix precisely of the form in \eqref{NonHermitianBlockToeplitzStructure} where the complex coefficients $a$ and $b$ are given by \eqref{eq:ab0} and, as such, the limiting spectral analysis in \Cref{sec:toeplitz} will apply.
\end{proof}

Before proving convergence of the parallel Schwarz algorithm, we first utilise the key result of \Cref{theorem:LimitingSpectrum}, on the limiting spectrum of $\mathcal{T}_{1d}$, to provide a useful intermediary lemma. This intermediary result will also aid our analysis in the two-dimensional case to follow in \Cref{sec:2dcase}.
\begin{lemma}[Limiting spectral radius and sufficient conditions for convergence]
	\label{lemma:LimitingSpectralRadius1D}
	The following relation holds:
	\begin{align*}
		\max_{\theta \in [-\pi,\pi]}\left|a \cos(\theta) \pm \sqrt{b^2 - a^2 \sin^2(\theta)}\right| = \max\{ |a + b|, | a -b|\},
	\end{align*}
	and thus the convergence factor $R_{1d} \vcentcolon= \lim_{N\rightarrow\infty} \rho({\cal T}_{1d})$ of the Schwarz algorithm as the number of subdomains tends to infinity verifies
	\begin{align}
		\label{eq:r1d}
		R_{1d} \le
		\left\{\begin{array}{ll}
			\max \left\lbrace |a + b|, | a - b| \right\rbrace & \text{if } \left|a^2-\frac{1}{2}b^2\right|^{1/2} \ge |a|, \\
			\max \left\lbrace |a + b|, | a - b|, |a| \right\rbrace & \text{if } \left|a^2-\frac{1}{2}b^2\right|^{1/2} < |a|.
		\end{array}\right.
	\end{align}
	Further, consider the change of variables
	\begin{align}
		\label{eq:params}
		z &= 2\delta\zeta, & l &= \frac{L}{2\delta}, & \gamma &= 2\delta \alpha, & v &= \frac{z-\gamma}{z+\gamma},
	\end{align}
	and let $z \vcentcolon= x+iy$ for $x,y \in \mathbb{R}^{+}$. Then the condition $g_{\pm}(z;\delta,l) > 0$, where
	\begin{align}
		\label{eq:g}
		g_{\pm}(z;\delta,l) &= (e^{2lx} - 1)(e^{2x} - |v|^2) \pm 4 \sin(ly) (\Im v\cos{y} - \Re v \sin{y}) e^{x(l+1)},
	\end{align}
	will ensure the desired convergence bound $\max\{ |a + b|, | a - b| \} <1$. Similarly, the condition $g(z;\delta,l) > 0$, where
	\begin{align}
		\label{eq:ga}
		\begin{split}
			g(z;\delta,l) &= (e^{2lx} - 1)(e^{2x(l+2)} - |v|^4) + 4 \sin(ly) \\
			& \mathrel{\phantom{=}} \mathrel\cdot \left[((\Re v)^2-(\Im v)^2) \sin(y(l+2)) - 2 \Re v \Im v \cos(y(l+2))\right]e^{2x(l+1)},
		\end{split}
	\end{align}
	will ensure that $|a|<1$.
\end{lemma}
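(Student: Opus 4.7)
The plan is to establish the extremal identity on the limiting curve, combine it with \Cref{theorem:LimitingSpectrum} to obtain the spectral-radius bound, and then use the change of variables \eqref{eq:params} to reduce the inequalities $|a\pm b|<1$ and $|a|<1$ to the stated conditions $g_\pm>0$ and $g>0$.

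For the extremal identity, the key observation is that, for each $\theta$, $\lambda_\pm(\theta)$ are the two roots of $\lambda^2 - 2a\cos(\theta)\,\lambda + (a^2-b^2)=0$, so the limiting curve admits the alternative description
\[
\Gamma = \{\lambda \in \mathbb{C}\setminus\{0\} : \phi(\lambda) \in [-1,1]\}, \qquad \phi(\lambda) = \frac{\lambda^2 + a^2 - b^2}{2a\lambda},
\]
whose boundary $\phi(\lambda) = \pm 1$ is exactly $\{\pm a \pm b\}$, with moduli $|a\pm b|$. Since $\Gamma$ is a compact real one-dimensional manifold, the maximum of $|\lambda|^2$ is attained either on this boundary or at a smooth interior critical point on $\{\Im\phi = 0\}$, where the Lagrange condition reduces to $\Re(\lambda\phi'(\lambda))=0$. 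Writing $\phi = u + w$ with $u = \lambda/(2a)$ and $w = (a^2-b^2)/(2a\lambda)$, so that $\lambda\phi'(\lambda) = u - w$, the conditions $\Im(u+w)=0$ and $\Re(u-w)=0$ together force $u = \bar w$, from which $(\bar a/a)|\lambda|^2 = \bar a^2 - \bar b^2$ and hence $|\lambda|^2 = |a^2-b^2| = |a+b|\,|a-b|$. Since $\sqrt{|a+b|\,|a-b|} \le \max\{|a+b|,|a-b|\}$, the interior critical value cannot exceed the boundary value, which gives the identity. Combining with \Cref{theorem:LimitingSpectrum} then yields \eqref{eq:r1d}: the extra eigenvalues $\pm\sqrt{\tfrac12 b^2-a^2}$ appear only when $|a^2|>|\tfrac12 b^2-a^2|$, in which case their modulus $|a^2 - \tfrac12 b^2|^{1/2}$ is strictly smaller than $|a|$, so including $|a|$ in the maximum absorbs them.

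For the sufficient conditions, the substitutions \eqref{eq:params} together with $\zeta\delta=z/2$, $\zeta L = lz$ and $\zeta(2\delta+L)=z(l+1)$, and division of numerator and denominator of \eqref{eq:ab0} by $(\zeta+\alpha)^2$, give
\[
a = \frac{e^z - v^2 e^{-z}}{D}, \qquad b = \frac{-v(e^{lz}-e^{-lz})}{D}, \qquad D = e^{z(l+1)} - v^2 e^{-z(l+1)}.
\]
A direct check provides the paired factorisations $D = (e^z \pm v e^{-lz})(e^{lz} \mp v e^{-z})$ and $e^z - v^2 e^{-z} \mp v(e^{lz}-e^{-lz}) = (1\mp v e^{z(l-1)})(e^z \pm v e^{-lz})$, so after cancelling the common factor
\[
a \pm b = \frac{1 \mp v e^{z(l-1)}}{e^{lz} \mp v e^{-z}}.
\]
Writing $z = x + iy$, combining the oscillatory cross-terms in $|e^{lz}\mp v e^{-z}|^2 - |1\mp ve^{z(l-1)}|^2$ via $e^{iy(l-1)} - e^{-iy(l+1)} = 2i\sin(yl)e^{-iy}$, and factoring out the positive prefactor $e^{-2x}$, reduces $|a\pm b|<1$ to $g_\mp(z;\delta,l)>0$; thus requiring both $g_+$ and $g_-$ positive ensures $\max\{|a+b|,|a-b|\}<1$. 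For $|a|<1$, expanding $|D|^2 - |e^z - v^2 e^{-z}|^2$ and grouping the oscillatory terms via $e^{-2iy(l+1)} - e^{-2iy} = -2i\sin(yl)e^{-iy(l+2)}$ together with $v^2 = (\Re v)^2 - (\Im v)^2 + 2i\Re v\,\Im v$, then pulling out $e^{-2x(l+1)}$, yields $g(z;\delta,l)$ in the form \eqref{eq:ga}. The main obstacle is bookkeeping rather than conceptual depth: locating the factorisations that make $g_\pm$ collapse to the simple form \eqref{eq:g} is the crucial step, and the \texttt{Maple} worksheets accompanying the paper are well suited to verifying the remaining trigonometric identities.
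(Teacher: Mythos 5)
Your proof is correct, but two of its three components take genuinely different routes from the paper's. For the extremal identity, the paper observes that $\lambda_{\pm}(\theta)$ are the eigenvalues of the $2\times2$ matrix $T$ with diagonal entries $a\cos\theta$ and off-diagonal entries $b\mp a\sin\theta$, and computes $\|T\|=\sqrt{\|T^{*}T\|}=\max\{|a+b|,|a-b|\}$, which bounds every $\theta$ at once with no case analysis; your Lagrange-multiplier treatment of $|\lambda|^{2}$ on the level set $\Im\phi=0$ reaches the same conclusion, with the implicit caveat that the points where $\phi'(\lambda)=0$ (i.e.\ $\lambda^{2}=a^{2}-b^{2}$, where $\Gamma$ need not be a smooth manifold) also give $|\lambda|^{2}=|a^{2}-b^{2}|\le\max\{|a+b|,|a-b|\}^{2}$, so nothing escapes your dichotomy. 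The handling of the two possible outlying eigenvalues via \Cref{theorem:LimitingSpectrum} is identical to the paper's. For the sufficient conditions, the paper sets $a\mp b=F_{\pm}(z)$, passes to the polar form $v=w e^{i\varphi}$, and after ``lengthy but elementary calculations'' arrives at the exact identity $|F_{\pm}|^{2}=1-(\text{positive fraction})\cdot g_{\pm}$; you instead divide \eqref{eq:ab0} through by $(\zeta+\alpha)^{2}$, factor $D=(e^{z}\pm ve^{-lz})(e^{lz}\mp ve^{-z})$ together with the numerator of $a\pm b$, cancel the common factor to get $a\pm b=(1\mp ve^{z(l-1)})/(e^{lz}\mp ve^{-z})$, and compare squared moduli. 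I verified that $e^{2x}\left(|e^{lz}\mp ve^{-z}|^{2}-|1\mp ve^{z(l-1)}|^{2}\right)$ reproduces $g_{\mp}$ in \eqref{eq:g} exactly (via $e^{iy(l-1)}-e^{-iy(l+1)}=2i\sin(ly)e^{-iy}$), and likewise $e^{2x(l+1)}\left(|D|^{2}-|e^{z}-v^{2}e^{-z}|^{2}\right)$ gives \eqref{eq:ga}, and that your sign convention ($|a+b|<1\iff g_{-}>0$) matches the paper's $a\mp b=F_{\pm}$. Your factorisation is arguably the more transparent derivation of $g_{\pm}$ and $g$, since it explains where their simple two-term structure comes from, whereas the paper's matrix-norm argument is the cleaner of the two for the first part.
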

\begin{proof}
	Since $\mathcal{T}_{1d}$ is of the form $\mathcal{T}$ in \eqref{NonHermitianBlockToeplitzStructure}, \Cref{theorem:LimitingSpectrum} provides its limiting spectrum and thus allows us to bound $R_{1d}$ by the largest eigenvalue in magnitude. We first bound $\lambda_{\pm}(\theta) = a \cos(\theta) \pm \sqrt{b^2 - a^2 \sin^2(\theta)}$. It is straightforward to see that these values
	are the eigenvalues of the matrix
	\begin{align*}
		T = \left(\begin{array}{cc}
			a \cos(\theta) & b - a\sin(\theta) \\
			b + a\sin(\theta) & a \cos(\theta)
		\end{array}
		\right).
	\end{align*}
	A simple computation shows that the matrix
	\begin{align*}
		T^*T = \left(\begin{array}{cc}
			|a|^2 +|b|^2 +(a\bar b+\bar ab)\sin(\theta) & (a\bar b+\bar ab)\cos(\theta)\\
			(a\bar b+\bar ab)\cos(\theta) & |a|^2 +|b|^2 - (a\bar b+\bar ab)\sin(\theta)
		\end{array}
		\right)
	\end{align*}
	has the eigenvalues $\mu_{\pm} = |a\pm b|^2$. We can now conclude that
	\begin{align*}
		|\lambda_{\pm}(\theta)| \le \|T\|_2 = \sqrt{\|T^*T\|_2} = \sqrt{\max\{\mu_+,\mu_-\} } = \max\{|a+b|,|a-b|\},
	\end{align*}
	and furthermore note that this bound is attained when $\theta = 0$. Additionally, \Cref{theorem:LimitingSpectrum} states that eigenvalues $\lambda = \pm(\tfrac{1}{2}b^{2} - a^{2})^{1/2}$ may belong to the limiting spectrum but only if they have magnitude strictly less than $|a|$. Together, these two cases yield \eqref{eq:r1d}.

	Let us consider now the complex-valued functions $F_{\pm} \colon \mathbb{C} \rightarrow \mathbb{C}$
	\begin{align*}
		F_{\pm}(z) = \frac{(z + \gamma)^2 e^{z} - (z- \gamma )^2 e^{-z}}{(z + \gamma)^2 e^{(l+1)z} - (z - \gamma)^2 e^{-(l+1)z}} \pm \frac{(z^2-\gamma^2)(e^{lz} -e^{-lz} ) }{(z + \gamma)^2 e^{(l+1)z} - (z-\gamma)^2 e^{-(l+1)z}}.
	\end{align*}
	It is easy to see that ${a \mp b} = F_{\pm} (z)$ when $z$, $l$ and $\gamma$ are as defined in \eqref{eq:params}. Similarly, we define the function $G \colon \mathbb{C} \to \mathbb{C}$ to be the first term in $F_{\pm}(z)$ so that $a = G(z)$. Let us simplify in the first instance the expression of $|F_{\pm}(z)|$ without using any assumption on $z \vcentcolon= x+iy$. For this we consider the transformation $v$ along with its polar form
	\begin{align}\label{eq:v}
		v &\vcentcolon= \frac{z-\gamma}{z+\gamma}, & v &= w(\cos(\varphi)+i\sin (\varphi)), & w=|v|.
	\end{align}
	After some lengthy but elementary calculations we find that
	\begin{subequations}
		\label{eq:fpmandgpm}
		\begin{align}
			\label{eq:fpm}
			|F_{\pm}(z)|^2 &= 1 - \frac{(e^{x(l + 1)}-w)^2+2w(1\mp\cos\left({(l + 1)y} - \varphi\right))e^{x(l + 1)} }{(e^{2x(l + 1)}-w^2)^2+4w^2\sin^2((l + 1)y - \varphi)e^{2x(l + 1)} } g_{\pm}(z;\delta,l)\\
			\begin{split}
				\label{eq:fpmgpm}
				g_{\pm}(z;\delta,l) &= (e^{2lx} - 1)(e^{2x} - w^2) \pm 4 w\sin(ly)\sin(\varphi - y)e^{x(l + 1)}.
			\end{split}
		\end{align}
	\end{subequations}
	We observe that the fraction in \eqref{eq:fpm} is positive, since the individual terms involved are, and thus $\max\{|a-b|,|a+b|\} < 1 \Leftrightarrow |F_{\pm}(z)|^2 < 1 \Leftrightarrow g_{\pm}(z;\delta,l) > 0$. We can now rewrite $g_{\pm}(z;\delta,l)$ in \eqref{eq:fpmgpm} using \eqref{eq:v} and convert $v$ to Cartesian form to obtain the required expression in \eqref{eq:g}. A near identical argument can be used to derive conditions for $|G(z)|^2 < 1$ and results in the criterion that $g(z;\delta,l) > 0$, where $g(z;\delta,l)$ is defined by \eqref{eq:ga}. Thus the required conclusions follow.
\end{proof}

We are now ready to state our main convergence result for the one-dimensional problem in the case when $\alpha = ik$, namely that of classical impedance conditions.
\begin{theorem}[Convergence of the Schwarz algorithm in 1D]
	\label{theorem:1d}
	If $\alpha = ik$ (the case of classical impedance conditions), then for all $k>0$, $\sigma>0$, $\delta>0$ and $L>0$ we have that $R_{1d} < 1$. Therefore the convergence will ultimately be independent of the number of subdomains (we say that the Schwarz method will scale).
\end{theorem}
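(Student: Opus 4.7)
The plan is to appeal to \Cref{lemma:LimitingSpectralRadius1D}, which reduces the whole theorem to verifying positivity of the two real-valued functions $g_+(z;\delta,l)$ and $g_-(z;\delta,l)$ at the specific value $z = 2\delta\zeta$. Indeed, once both $|a+b|<1$ and $|a-b|<1$ are established, the elementary estimate $|a|\le \tfrac{1}{2}(|a+b|+|a-b|)<1$ supplies the third bound in the second case of \eqref{eq:r1d} for free, and then $|\tfrac{1}{2}b^{2}-a^{2}|^{1/2} < |a| < 1$ handles the potentially isolated limit points in \eqref{LimitingPoints} (since these only enter the spectrum when $|\tfrac{1}{2}b^{2}-a^{2}|^{1/2}<|a|$). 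So the entire convergence statement reduces to showing $g_{\pm}(z;\delta,l)>0$ for every admissible $(k,\sigma,\delta,L)$.

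To set up this verification, I would specialise the change of variables \eqref{eq:params} to $\alpha=ik$, so that $\gamma = 2ik\delta$ is purely imaginary. Writing $z = x+iy$ with $x,y>0$ (guaranteed by taking the principal branch of $\zeta=\sqrt{ik\sigma-k^{2}}$), the identity $\zeta^{2}=ik\sigma-k^{2}$ yields the algebraic constraints $y^{2}-x^{2}=4k^{2}\delta^{2}$ and $2xy=4k\sigma\delta^{2}$. A direct computation then produces the remarkably clean closed forms
\begin{align*}
|v|^{2} = \frac{y-2k\delta}{y+2k\delta}\in(0,1),\qquad \cos\varphi = \frac{x}{y},\qquad \sin\varphi = -\frac{2k\delta}{y},
\end{align*}
where $v = w e^{i\varphi}$. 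In particular $|v|<1$ strictly, so the non-oscillatory summand $(e^{2lx}-1)(e^{2x}-|v|^{2})$ appearing in $g_\pm$ is strictly positive.

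The core of the proof is then to show that this non-oscillatory summand dominates the oscillatory correction $\pm 4\sin(ly)(\Im v\cos y-\Re v\sin y)e^{x(l+1)}$ for \emph{both} signs simultaneously. Using the closed forms above, the factor $\sin(\varphi-y)$ simplifies to $-(2k\delta\cos y+x\sin y)/y$, and product-to-sum identities such as $2\sin(ly)\sin(\varphi-y)=\cos((l+1)y-\varphi)-\cos((l-1)y+\varphi)$ allow me to reorganise the correction as a difference of exponentials in $(l\pm 1)z$. I would then expand the non-oscillatory part in the same monomials $e^{2lx},e^{2x},1,|v|^{2}$ and try to combine everything into a manifestly positive expression, for instance a completion of squares in the variables $e^{x}$ and $w$ together with the geometric constraint $(1-w^{2})(y+2k\delta)=4k\delta$.

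The main obstacle I expect is precisely this last step: bounding the oscillatory piece via $|\sin|\le 1$ alone is too crude, because as $y\to\infty$ with $x$ fixed one has $|v|\to 1$ while the oscillatory factor does not decay, so the naive inequality $(e^{2lx}-1)(e^{2x}-|v|^{2})>4|v|e^{x(l+1)}$ fails. The delicate point is therefore to exploit the precise relationship between $\varphi$, $y$ and the parameters encoded in $y^{2}=x^{2}+4k^{2}\delta^{2}$ and $\sin\varphi=-2k\delta/y$ to obtain the right cancellations; I anticipate that the concluding algebraic manipulation is exactly the sort of identity the authors verify with the accompanying \texttt{Maple} worksheets.
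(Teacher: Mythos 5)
Your reduction to \Cref{lemma:LimitingSpectralRadius1D} is sound, and your opening observation is actually a small improvement on the paper: since $a=\tfrac12\bigl((a+b)+(a-b)\bigr)$, the triangle inequality gives $|a|\le\tfrac12(|a+b|+|a-b|)\le\max\{|a+b|,|a-b|\}$, so verifying $g_{\pm}(z;\delta,l)>0$ alone suffices and the separate check of $g(z;\delta,l)>0$ (which the paper carries out in full) is logically redundant. Your auxiliary computations are also correct: with $\kappa=2k\delta$ and $y^{2}-x^{2}=\kappa^{2}$ one does get $|v|^{2}=(y-\kappa)/(y+\kappa)$, $\cos\varphi=x/y$, $\sin\varphi=-\kappa/y$, and $\sin(\varphi-y)=-(\kappa\cos y+x\sin y)/y$, all consistent with the paper's formulas for $\Re v$, $\Im v$, $|v|^{2}$.

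However, the proof stops exactly where it needs to start: you never establish $g_{\pm}(z;\delta,l)>0$, and you explicitly defer the key inequality to an anticipated ``Maple-verified identity.'' That is the entire content of the theorem, so as written this is a genuine gap. Moreover, no computer algebra is needed; the paper's argument is short and elementary. After clearing the positive prefactor, one has
\begin{align*}
\tilde{g}_{\pm} &= 2\bigl[(\kappa^2+x^2)\sinh(x) + \kappa\sqrt{\kappa^2+x^2}\,\cosh(x)\bigr]\sinh(lx)
\mp 2\bigl[x^2\sin(y) + \kappa x\cos(y)\bigr]\sin(ly),
\end{align*}
with $y=\sqrt{\kappa^2+x^2}$. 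Your instinct that $|\sin|\le1$ is too crude is right, but the trick you are missing is to use the \emph{pair} of bounds $|\sin(t)|<t<\sinh(t)$ and $|\cos(t)|\le1<\cosh(t)$ for $t>0$: lower-bounding the hyperbolic bracket by its linearisation and upper-bounding the trigonometric bracket by $\bigl(x^{2}\sqrt{\kappa^2+x^2}+\kappa x\bigr)\,l\sqrt{\kappa^2+x^2}$ yields two polynomial expressions in $x$, $\kappa$, $l$ that cancel \emph{exactly}, giving $\tilde{g}_{\pm}>0$ with strict inequality because $|\sin(t)|<t$ is strict. Your proposed route via product-to-sum identities and completion of squares in $e^{x}$ and $w$ may well be workable, but you have not carried it out, and your worry about the regime ``$y\to\infty$ with $x$ fixed'' is moot once the constraint $y^{2}=x^{2}+\kappa^{2}$ is imposed throughout, as it must be.
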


\begin{proof}
	By \Cref{lemma:LimitingSpectralRadius1D} we see that it is enough to study the sign of $g_{\pm}(z;\delta,l)$ and of $g(z;\delta,l)$. We can see that if $\alpha = ik$ and $\kappa = 2\delta k$ then for $z\vcentcolon= x+iy$ \eqref{eq:v} becomes
	\begin{align*}
		\Re v &= \frac{-\kappa^2 + x^2 + y^2}{(\kappa+y)^2 + x^2}, & \Im v &= \frac{-2\kappa x}{(\kappa+y)^2 + x^2}, & |v|^2 &= \frac{(\kappa-y)^2 + x^2}{(\kappa+y)^2 + x^2} < 1,
	\end{align*}
	the final inequality holding since $\kappa > 0$ and $y > 0$. We emphasise that $x$ and $y$ are the real and imaginary parts of $z=2\delta\zeta$ and so are positive by the nature of $\zeta$ in \eqref{eq:ajbj}. Now we can further simplify \eqref{eq:g} using these expressions for $v$ to obtain
	\begin{subequations}
		\label{eq:gpm}
		\begin{align}
			\label{eq:gpmg}
			g_{\pm}(z;\delta,l) &= \frac{4e^{x(l+1)}}{(\kappa+y)^2 + x^2} \tilde{g}_{\pm}(z;\delta,l) \\
			\begin{split}
				\label{eq:gpmgt}
				\tilde{g}_{\pm}(z;\delta,l) &= [(\kappa^2+x^2+y^2) \sinh(x) + 2\kappa y \cosh(x)]\sinh(lx) \\
				& \mathrel{\phantom{=}} \mathrel\pm [(\kappa^2-x^2-y^2) \sin(y) - 2\kappa x \cos(y)]\sin(ly).
			\end{split}
		\end{align}
	\end{subequations}
	Proving positivity of $g_{\pm}(z;\delta,l)$ is then equivalent to positivity of $\tilde g_{\pm}(z;\delta,l)$. To proceed we relate $x$ and $y$ by considering the real part of $z^2 = (x+iy)^2 = 2i\kappa\delta\sigma - \kappa^2$ which yields $y^2 = \kappa^2 + x^2$. Let us now eliminate $y$ using this identity to obtain
	\begin{align*}
		\begin{split}
			\tilde{g}_{\pm}(z;\delta,l) &= 2 \left[(\kappa^2+x^2) \sinh(x) + \kappa \sqrt{\kappa^2+x^2} \cosh(x)\right]\sinh(lx) \\
			& \mathrel{\phantom{=}} \mathrel\mp 2 \left[x^2 \sin(\sqrt{\kappa^2+x^2}) + \kappa x \cos(\sqrt{\kappa^2+x^2})\right]\sin(l\sqrt{\kappa^2+x^2}).
		\end{split}
	\end{align*}
	To show that this is positive we want to lower bound the hyperbolic term in the first line (which is positive) while making the trigonometric term in the second line as large as possible in magnitude and negative. To do this we make use of some elementary bounds which hold for $t > 0$:
	\begin{align}
		\label{eq:HypTrigBounds}
		|\sin(t)| &< t < \sinh(t), & |\cos(t)| &\le 1 < \cosh(t).
	\end{align}
	We can now derive the positivity bound on $\tilde{g}_{\pm}(z;\delta,l)$, noting that $x>0$, as follows
	\begin{align*}
		\tilde{g}_{\pm}(z;\delta,l) &> 2 \left[(\kappa^2+x^2)x + \kappa \sqrt{\kappa^2+x^2}\right] lx - 2 \left[x^2 \sqrt{\kappa^2+x^2} + \kappa x \right] l\sqrt{\kappa^2+x^2} = 0.
	\end{align*}
	Turning to $g(z;\delta,l)$, we can follow a similar process, simplifying \eqref{eq:ga} to find that
	\begin{subequations}
		\label{eq:g1d}
		\begin{align}
			\label{eq:g1dg}
			g(z;\delta,l) &= \frac{4e^{2x(l+1)}}{((\kappa+y)^2 + x^2)^2} \tilde{g}(z;\delta,l) \\
			\begin{split}
				\label{eq:g1dgt}
				\tilde{g}(z;\delta,l) &= \bigl[ ((\kappa^2+x^2+y^2)^2+4\kappa^2y^2) \sinh(x(l+2)) \\
				& \qquad\qquad\;\ \mathrel+ 4\kappa y(\kappa^2+x^2+y^2) \cosh(x(l+2)) \bigr] \sinh(lx) \\
				& \mathrel{\phantom{=}} \mathrel+ \bigl[ ((-\kappa^2+x^2+y^2)^2-4\kappa^2x^2) \sin(y(l+2)) \\
				& \qquad\qquad\;\ \mathrel+ 4\kappa x(-\kappa^2+x^2+y^2) \cos(y(l+2)) \bigr] \sin(ly).
			\end{split}
		\end{align}
	\end{subequations}
	Using the identity $y^2 = \kappa^2 + x^2$ along with the elementary bounds \eqref{eq:HypTrigBounds} we obtain
	\begin{align*}
		\begin{split}
			\tilde{g}(z;\delta,l) &= 4 \left[ y^2(y^2+\kappa^2) \sinh(x(l+2)) + 2\kappa y^3 \cosh(x(l+2)) \right] \sinh(lx) \\
			& \mathrel{\phantom{=}} \mathrel+ 4 \left[ x^2(x^2-\kappa^2) \sin(y(l+2)) + 2 \kappa x^3 \cos(y(l+2)) \right] \sin(ly)
		\end{split} \\
		&> 4 \left[ y^2(y^2+\kappa^2) x(l+2) + 2\kappa y^3 \right] lx - 4 \left[ x^2(x^2+\kappa^2) y(l+2) + 2 \kappa x^3 \right] ly \\
		&= 4l(l+2)x^2y^2\kappa^2 + 8lxy\kappa^3 \\
		&> 0.
	\end{align*}
	Thus, we conclude that for any choice of parameters the required sufficient criteria from \Cref{lemma:LimitingSpectralRadius1D} on $g_{\pm}(z;\delta,l)$ and $g(z;\delta,l)$ hold and hence $R_{1d} < 1$. Therefore the algorithm will always converge in a number of iterations ultimately independent of the number of subdomains. Nonetheless, note that as any problem parameter shrinks to zero the bounds become tight and so $R_{1d}$ can be made arbitrarily close to one.
\end{proof}

\begin{figure}
	\centering
	\hfill
	\includegraphics[width=0.4\textwidth,trim=10cm 0.8cm 10cm 1.6cm ,clip]{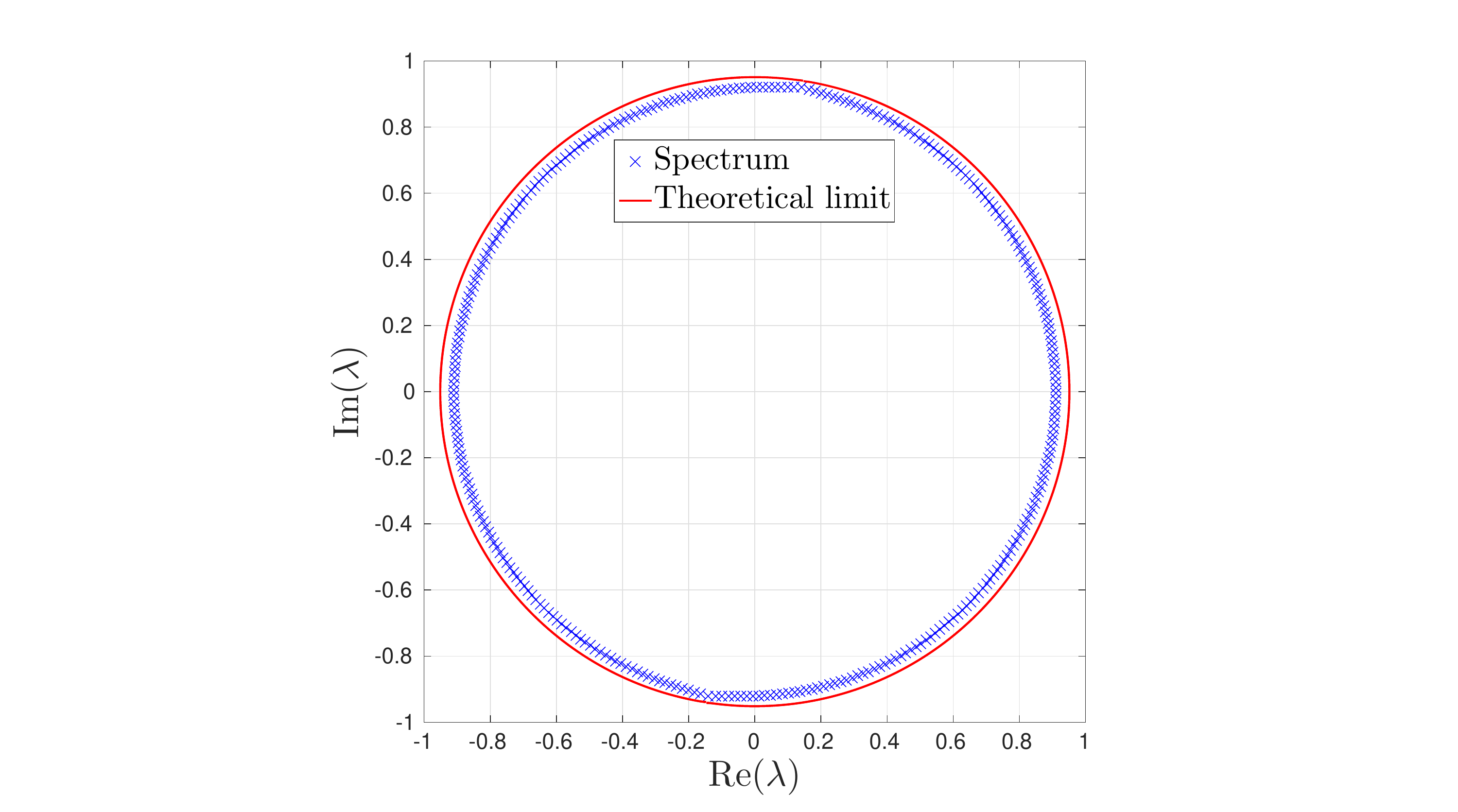}
	\hfill
	\includegraphics[width=0.4\textwidth,trim=10cm 0.8cm 10cm 1.6cm ,clip]{./Convergence1D_k30_sigma0p1}
	\hfill
	\caption{The spectrum of the iteration matrix ${\cal T}_{1d}$ for $N=160$ (left) and the convergence factor of the Schwarz algorithm for varying number of subdomains $N$ (right) when $\sigma = 0.1$.}
	\label{fig:s01}
\end{figure}

\begin{figure}
	\centering
	\hfill
	\includegraphics[width=0.4\textwidth,trim=10cm 0.8cm 10cm 1.6cm ,clip]{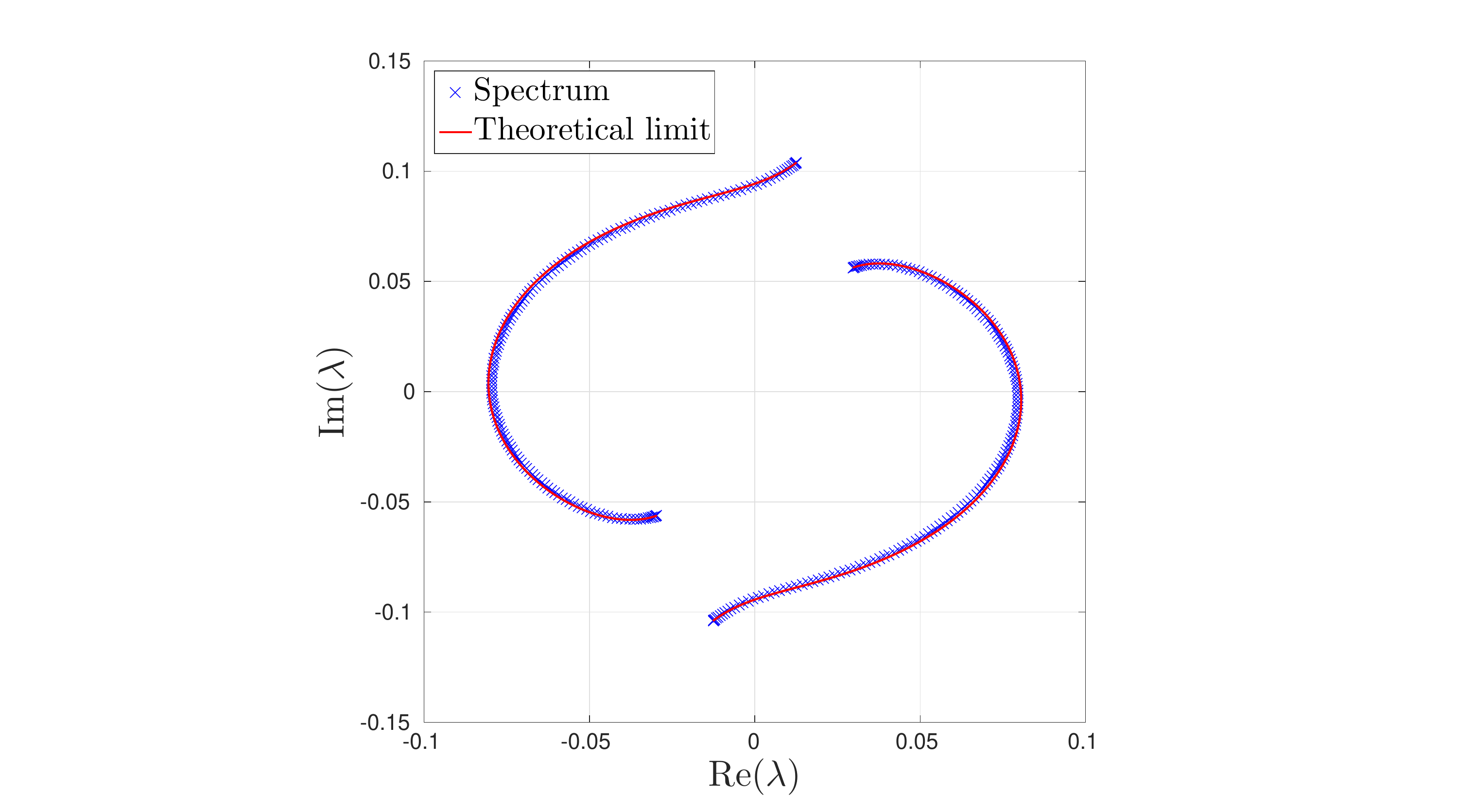}
	\hfill
	\includegraphics[width=0.4\textwidth,trim=10cm 0.8cm 10cm 1.6cm ,clip]{./Convergence1D_k30_sigma5}
	\hfill
	\caption{The spectrum of the iteration matrix ${\cal T}_{1d}$ for $N=160$ (left) and the convergence factor of the Schwarz algorithm for varying number of subdomains $N$ (right) when $\sigma = 5$.}
	\label{fig:s5}
\end{figure}

In order to verify this result, we compute numerically (using \texttt{MATLAB}) the spectrum of the iteration matrix and compare it with the theoretical limit for different values of $\sigma$. We choose here $k = 30$, $L=1$ and $\delta = L/10$. From \Cref{fig:s01,fig:s5} we notice that the spectrum of the iteration matrix tends to the theoretical limit when the number of subdomains becomes large and the algorithm remains convergent. Additionally, when $\sigma$ grows the behaviour of the algorithm improves, which is consistent with the fact that when the absorption in the equations is important (solutions are less oscillatory) or the overlap is large (more information is exchanged) the systems are easier to solve. We also remark an empirical observation that the convergence factor monotonically increases towards the limit given in \Cref{lemma:LimitingSpectralRadius1D}, thus indicating that the algorithm will always converge for any $N$.

Before moving onto the two-dimensional case, we first derive a simple corollary showing how our results can be extended in the direction of $k$-independence of the one-level method within certain scenarios. In this case we consider the parameters $L$ and $\delta$ being dependent upon the wave number $k$.
\begin{corollary}[A case of $k$-independent convergence]
	\label{corollary:k-indpendence1D}
	Suppose $\alpha = ik$ (the case of classical impedance conditions) and that $\sigma = \sigma_{0} k$ for some constant $\sigma_{0}$. Consider a $k$-dependent domain decomposition given by $L = L_{0} k^{-1}$ and $\delta = \delta_{0} k^{-1}$, that is the subdomain size and overlap shrink inversely proportional to the wave number. Then the convergence of the corresponding Schwarz method is independent of the wave number $k$. Thus the approach is $k$-robust and convergence will ultimately be independent of the number of subdomains.
\end{corollary}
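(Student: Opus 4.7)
The plan is to show that, under the prescribed $k$-dependent scalings, all the dimensionless quantities entering the iteration matrix $\mathcal{T}_{1d}$ become independent of $k$, so that the convergence factor $R_{1d}$ inherits this $k$-independence while Theorem 3.3 already guarantees $R_{1d}<1$.

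First I would compute the rescaled variables of \eqref{eq:params}. With $\sigma=\sigma_{0}k$ we have
\begin{align*}
\zeta = \sqrt{ik\sigma-k^2} = k\sqrt{i\sigma_{0}-1},
\end{align*}
where the principal branch of the square root is used as in \eqref{eq:ajbj}. Inserting $\delta=\delta_{0}k^{-1}$ and $\alpha=ik$ into \eqref{eq:params} gives
\begin{align*}
z = 2\delta\zeta = 2\delta_{0}\sqrt{i\sigma_{0}-1}, \qquad \gamma = 2\delta\alpha = 2i\delta_{0}, \qquad l = \frac{L}{2\delta} = \frac{L_{0}}{2\delta_{0}},
\end{align*}
each of which is manifestly independent of $k$.

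Next I would observe that the coefficients $a$ and $b$ in \eqref{eq:ab0} are expressible purely in terms of $z$, $\gamma$, and $l$, precisely as the first and second terms of $F_{\pm}(z)$ in the proof of Lemma 3.3. Since $z$, $\gamma$, $l$ are $k$-independent under our scalings, so are $a$ and $b$. The iteration matrix $\mathcal{T}_{1d}$ of \eqref{eq:itert1} depends on $k$ only through $a$ and $b$, hence its entire spectrum, and in particular its spectral radius and limiting spectral radius $R_{1d}$, are $k$-independent.

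To conclude, I would apply Theorem 3.3, which asserts $R_{1d}<1$ for every choice of $k,\sigma,\delta,L>0$. Combined with the $k$-independence just established, this yields a uniform bound $R_{1d}=R_{1d}(\sigma_{0},\delta_{0},L_{0})<1$ for all $k>0$, giving the claimed $k$-robust and ultimately $N$-independent convergence. The main (and essentially only) obstacle is the verification that no hidden $k$-dependence slips in through the branch choice of $\zeta$ or through the finite-$N$ boundary blocks $\tilde T_{1},\tilde T_{2},\hat T_{1},\hat T_{2}$ in \eqref{eq:itert1}; both are immediate, since the principal branch fixes $\zeta/k$ uniquely and the boundary blocks depend only on $a$ and $b$.
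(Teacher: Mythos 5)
Your proposal is correct and follows essentially the same route as the paper: substitute the $k$-dependent parameters into \eqref{eq:ab0}, observe that $a$ and $b$ (and hence $\mathcal{T}_{1d}$) are independent of $k$, and invoke \Cref{theorem:1d} for $R_{1d}<1$. Your version merely makes the computation of $z$, $\gamma$ and $l$ explicit (the reference to ``Lemma 3.3'' should be to \Cref{lemma:LimitingSpectralRadius1D}, but that is immaterial).
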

\begin{proof}
	Inserting the relevant $k$-dependent parameters $\alpha$, $\sigma$, $\delta$ and $L$ into \eqref{eq:ab0} we find that both coefficients $a$ and $b$, and thus the iteration matrix ${\cal T}_{1d}$, are independent of $k$. Combining this result with \Cref{theorem:1d} shows that the convergence of the corresponding Schwarz method is both $k$-independent and, ultimately, independent of the number of subdomains.
\end{proof}

We note that $k$-robustness of the one-level method was proved, under certain conditions, in \cite{Graham:2020:DDI} using rigorous GMRES bounds. Here, our theory is able to directly evidence $k$-robustness of the algorithm at the continuous level, independent of the discretisation, in a simple one-dimensional scenario. We can also consider the case where $k$ is linked to $N$ such that we now solve on a fixed domain a family of problems with increasing wave number using an increasing number of subdomains, here our theory shows the method to be $k$-robust and weakly scalable.

\Cref{theorem:1d} shows that weak scalability is achieved in the one-dimensional case as soon as the parameter $\sigma$ is strictly positive. Intuitively this makes sense since, in the one-dimensional case for $\sigma=0$, impedance conditions are transparent and therefore a classical iterative method will need a number of iterations equal to the number of subdomains to converge (hence no scalability). The complex shift brought about by $\sigma$ will aid convergence by damping the waves and, when this damping parameter is large enough, robustness with respect to the wave number can also be achieved as seen in \Cref{corollary:k-indpendence1D}.

\section{The two-dimensional problem}
\label{sec:2dcase}

Consider the domain $\Omega = (a_1,b_N)\times (0,\hat L)$ on which we wish to solve the two-dimensional problem and a decomposition into $N$ overlapping subdomains defined by $\Omega_j = (a_j,b_j)\times (0,\hat L)$, where $a_j$ and $b_j$ are as given in \eqref{eq:ab}. We will analyse the case of the Helmholtz equation and then Maxwell's equations.

\subsection{The Helmholtz equation}
\label{sec:2dhelmholtz}
The definition of the parallel Schwarz method with Robin transmission conditions for the iterates $u_j^n$ in the case of the two-dimensional Helmholtz problem is
\begin{align}
	\label{eq:2dh}
	\left\{ \begin{array}{r@{}ll}
		(ik\sigma-k^2) u_j^{n} - (\partial_{xx} + \partial_{yy}) u_j^{n} &{}= f, & (x,y) \in (a_j,b_j) \times (0,\hat{L}), \\
		{\cal B}_l u_j^{n}(a_j,y) &{}= {\cal B}_l u_{j-1}^{n-1}(a_j,y), & y \in (0,\hat L), \\
		{\cal B}_r u_j^{n}(b_j,y) &{}= {\cal B}_r u_{j+1}^{n-1}(b_j,y), & y \in (0,\hat L), \\
		u_j^n(x,y) &{}= 0, & x \in (a_j,b_j), \ y \in \lbrace0,\hat L\rbrace,
	\end{array} \right.
\end{align}
where the boundary operators ${\cal B}_l$ and ${\cal B}_r$ are as defined in \eqref{eq:1d0}. We consider here the case of impedance conditions, i.e., $\alpha = ik$. Note that this configuration corresponds to a {\it ``two-dimensional wave-guide''} problem. By linearity, it follows that the local errors $e_j^{n} = u|_{\Omega_j} - u_j^n$ satisfy the homogeneous analogue of \eqref{eq:2dh}. To proceed, we make use of the Fourier sine expansion of $e_j^{n}$, as the solution verifies Dirichlet boundary conditions on the top and bottom of each rectangular subdomain:
\begin{align}
	\label{eq:Fourier}
	e_j^{n}(x,y) &= \sum_{m=1}^{\infty} v_j^{n}(x,\tilde{k}) \sin(\tilde{k}y), & \tilde{k} &= \frac{m\pi}{\hat{L}}, \ m \in \mathbb{N}.
\end{align}
Inserting this expression into the homogeneous counterpart of \eqref{eq:2dh} we find that, for each Fourier number $\tilde k$, $v_j^{n}(x, \tilde k)$ verifies the one-dimensional problem
\begin{align}
	\label{eq:2dv}
	\left\{ \begin{array}{r@{}ll}
		(ik\sigma+\tilde k^2-k^2)v_j^{n} - \partial_{xx}v_j^{n} &{}= 0, & x \in (a_j,b_j), \\
		{\cal B}_l v_j^{n}(x,\tilde k) &{}= {\cal B}_l v_{j-1}^{n-1}(x,\tilde k), & x = a_j,\\
		{\cal B}_r v_j^{n}(x,\tilde k) &{}= {\cal B}_r v_{j+1}^{n-1}(x,\tilde k), & x = b_j,
	\end{array} \right.
\end{align}
which is of exactly the same type as \eqref{eq:sch1dall} where $ik\sigma-k^2$ is replaced by $ik\sigma+\tilde k^2-k^2$. Therefore, the result from \Cref{lemma:basic1d} applies here if we replace $\alpha$ with $ik$ and $\zeta$ with
\begin{align}
	\label{eq:lamk}
	\zeta(\tilde k) = \sqrt{ik\sigma+\tilde k^2-k^2}.
\end{align}
Let us denote the resulting iteration matrix, which propagates information for each Fourier number $\tilde k$ independently, by ${\cal T}_{1d}^{\mathrm{H}}(\tilde k)$ and further let $R_{1d}^{\mathrm{H}}(\tilde k) \vcentcolon= \lim_{N\rightarrow\infty} \rho({\cal T}_{1d}^{\mathrm{H}}(\tilde k))$ with $R_{2d}^{\mathrm{H}} = \sup_{\tilde k} R_{1d}^{\mathrm{H}}(\tilde k)$. We can now state our main convergence result for the two-dimensional Helmholtz problem.

\begin{theorem}[Convergence of the Schwarz algorithm for Helmholtz in 2D]
	\label{theorem:2dhelmholtz}
	If $\alpha = ik$ (the case of classical impedance conditions), then for all $k>0$, $\sigma>0$, $\delta>0$ and $L>0$ we have that $R_{1d}^{\mathrm{H}}(\tilde k) < 1$ for all evanescent modes $\tilde k > k$. Furthermore, under the assumption that between them $\sigma$, $\delta$ and $L$ are sufficiently large we have that $R_{2d}^{\mathrm{H}} < 1$. In particular, this is true when $\sigma \ge k$ for all $\delta>0$ and $L>0$. Therefore the convergence will ultimately be independent of the number of subdomains (we say that the Schwarz method will scale).
\end{theorem}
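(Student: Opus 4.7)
The plan is to exploit the Fourier reduction already carried out in \eqref{eq:Fourier}--\eqref{eq:lamk}: the 2D iteration decouples into a family of 1D block Toeplitz iterations indexed by the transverse Fourier number $\tilde k$, each with iteration matrix $\mathcal{T}_{1d}^{\mathrm{H}}(\tilde k)$ of the form \eqref{NonHermitianBlockToeplitzStructure}. Consequently, \Cref{lemma:LimitingSpectralRadius1D} applies mode by mode upon replacing $\zeta$ with $\zeta(\tilde k) = \sqrt{ik\sigma + \tilde k^2 - k^2}$, reducing the theorem to showing $g_{\pm}(z;\delta,l) > 0$ and $g(z;\delta,l) > 0$ for every admissible mode, where $z = 2\delta\zeta(\tilde k) = x+iy$ with $x,y>0$. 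The 2D analogue of the 1D identity $y^{2} = \kappa^{2}+x^{2}$ follows from $z^{2} = 4\delta^{2}(ik\sigma + \tilde k^{2} - k^{2})$, namely
\begin{align*}
x^{2} - y^{2} &= \tilde{\kappa}^{2} - \kappa^{2}, & 2xy &= 2\kappa\delta\sigma,
\end{align*}
where $\tilde{\kappa} = 2\delta\tilde k$. Thus evanescent modes $(\tilde k > k)$ satisfy $y < x$, while propagative modes $(\tilde k \le k)$ satisfy $y \ge x$ and recover the 1D situation when $\tilde k = 0$.

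For the evanescent regime I would eliminate $y^{2}$ in the expanded forms \eqref{eq:gpmgt} and \eqref{eq:g1dgt} of $\tilde{g}_{\pm}$ and $\tilde{g}$ and then invoke the elementary bounds \eqref{eq:HypTrigBounds} exactly as in the proof of \Cref{theorem:1d}. Since here $y < x$, the hyperbolic factors $\sinh(lx)$ and $\sinh(x(l+2))$ dominate their trigonometric counterparts $\sin(ly)$ and $\sin(y(l+2))$ with strict margin: the algebraic residual that collapsed exactly to zero in 1D picks up a strictly positive correction proportional to $\tilde{\kappa}^{2} - \kappa^{2} > 0$, so positivity of $g_{\pm}$ and $g$ holds unconditionally, and $R_{1d}^{\mathrm{H}}(\tilde k) < 1$ for every $\tilde k > k$.

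The main obstacle is the propagative band $\tilde k \le k$, where $y \ge x$ and a short calculation shows that the naive use of \eqref{eq:HypTrigBounds} actually leaves a \emph{negative} residual of order $(\kappa^{2}-\tilde{\kappa}^{2})\tilde{\kappa}^{2}$, so some genuine slack must be injected. The plan is to use the second identity $2xy = 2\kappa\delta\sigma$ as a reservoir of absorption: when $\sigma \ge k$ one obtains $2xy \ge \kappa^{2} \ge y^{2} - x^{2}$, which forces $y$ to be controlled by $x$ up to a universal constant and supplies the quantitative surplus required to close the positivity estimate via a refined application of \eqref{eq:HypTrigBounds}. More generally, any sufficiently large combination of $\sigma$, $\delta$ and $L$ either inflates $x$ in the exponential factors of \eqref{eq:gpm} and \eqref{eq:g1d} or enlarges the multiplier $l = L/(2\delta)$, restoring hyperbolic dominance. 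Combining this with the unconditional evanescent estimate yields $R_{2d}^{\mathrm{H}} = \sup_{\tilde k} R_{1d}^{\mathrm{H}}(\tilde k) < 1$ and hence scalability; the delicate balancing in the propagative band is the principal technical challenge.
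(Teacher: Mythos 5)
Your overall strategy coincides with the paper's: reduce mode-by-mode via the Fourier expansion to \Cref{lemma:LimitingSpectralRadius1D}, establish positivity of $g_{\pm}$ and $g$ using $x^{2}-y^{2}=\tilde\kappa^{2}-\kappa^{2}$ and the elementary bounds \eqref{eq:HypTrigBounds}, which gives the evanescent case with a surplus proportional to $\tilde\kappa^{2}-\kappa^{2}$, and then invoke largeness of $x$ or $lx$ (via the explicit formula \eqref{eq:x}) to restore hyperbolic dominance when $\sigma$, $\delta$, $L$ are large between them. Those parts are correct and match the paper.

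The gap is in the propagative band $\tilde k\le k$, specifically in the claim that $\sigma\ge k$ gives $2xy\ge\kappa^{2}\ge y^{2}-x^{2}$, hence $y\le(1+\sqrt{2})x$, and that this control of $y/x$ "supplies the quantitative surplus" for a refined application of \eqref{eq:HypTrigBounds}. It does not: if you bound $\sinh(x)\sinh(lx)$ below by $lx^{2}$ and the trigonometric block above by $(|\kappa^{2}-x^{2}-y^{2}|y+2\kappa x)\,ly$, the $2\kappa xy$ cross-terms cancel exactly and you are left with $l\bigl[(\kappa^{2}+x^{2}+y^{2})x^{2}-|\kappa^{2}-x^{2}-y^{2}|y^{2}\bigr]$, whose deficit when $y>x$ is of order $(\kappa^{2}+x^{2}+y^{2})(y^{2}-x^{2})$; a bound $y\le Cx$ with a universal constant $C>1$ cannot make this nonnegative. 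What actually closes the estimate is algebraic, not a ratio bound: substitute $\kappa^{2}+x^{2}+y^{2}=\tilde\kappa^{2}+2y^{2}$ in the hyperbolic part and $\kappa^{2}-x^{2}-y^{2}=\tilde\kappa^{2}-2x^{2}$ in the trigonometric part, and split on the sign of $\tilde\kappa^{2}-2x^{2}$. One branch collapses to $l\,\tilde\kappa^{2}(x^{2}+y^{2})>0$ unconditionally; the other gives $l\bigl[\tilde\kappa^{2}(\tilde\kappa^{2}-\kappa^{2})+4x^{2}y^{2}\bigr]=l\bigl[\tilde\kappa^{4}+\kappa^{2}(s^{2}-\tilde\kappa^{2})\bigr]$ using $4x^{2}y^{2}=\kappa^{2}s^{2}$, which is positive whenever $\tilde\kappa\le s$, i.e.\ $\tilde k\le\sigma$. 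Since propagative modes satisfy $\tilde k\le k\le\sigma$ under the hypothesis $\sigma\ge k$, this covers all modes; an analogous (longer) case analysis is needed for $\tilde g$. So you correctly identified $2xy=\kappa s$ as the reservoir of absorption, but the route from it to positivity must go through this exact cancellation and case split rather than through a bound on $y/x$. (A minor aside: $\tilde k=m\pi/\hat L$ with $m\ge1$, so $\tilde k=0$ is not an admissible mode.)
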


\begin{proof}
	By \Cref{lemma:LimitingSpectralRadius1D} we see that it is enough to study the sign of $g_{\pm}(z;\delta,l)$ and $g(z;\delta,l)$. To assist, we use the scaled notation $\kappa = 2\delta k$, $\tilde \kappa = 2\delta \tilde k$ and $s = 2\delta\sigma$ akin to \eqref{eq:params}. Now $g_{\pm}(z;\delta,l)$ can be formally simplified identically to \eqref{eq:gpm}, however, in this case with $\zeta$ as in \eqref{eq:lamk} the real part of $z^2$ gives the identity $\tilde \kappa^2 - \kappa^2 = x^2 - y^2$. Utilising this identity along with the bounds \eqref{eq:HypTrigBounds} yields
	\begin{align*}
		\tilde{g}_{\pm}(z;\delta,l) &> \left[(\kappa^2+x^2+y^2)x + 2\kappa y\right] lx - \left|(\kappa^2-x^2-y^2)y -2\kappa x\right| ly \\
		&\ge l (\kappa^2+x^2+y^2) (\tilde \kappa^2-\kappa^2).
	\end{align*}
	Hence we always have $\tilde{g}_{\pm}(z;\delta,l) > 0$ for the evanescent modes $\tilde k > k$ (equivalent to $\tilde \kappa > \kappa$). Similarly, $g(z;\delta,l)$ can be simplified identically to \eqref{eq:g1d} and we find that
	\begin{align*}
		\begin{split}
			\tilde{g}(z;\delta,l) &> l(l+2) \left( x^2((\kappa^2+x^2+y^2)^2+4\kappa^2y^2) - y^2|(-\kappa^2+x^2+y^2)^2-4\kappa^2x^2| \right) \\
			& \mathrel{\phantom{=}} \mathrel+ 4l\kappa xy \left( \kappa^2+x^2+y^2 - |-\kappa^2+x^2+y^2| \right)
		\end{split} \\
		&\ge l(l+2) (\kappa^2+x^2+y^2)^2 (\tilde \kappa^2-\kappa^2),
	\end{align*}
	and so we always have $\tilde{g}(z;\delta,l) > 0$ for the evanescent modes $\tilde k > k$ too. Together this shows that $R_{1d}^{\mathrm{H}}(\tilde k) < 1$ for all evanescent modes. Note that, for the remaining modes $\tilde k \le k$, it is possible that $R_{1d}^{\mathrm{H}}(\tilde k) \ge 1$ for some choices of problem parameters.

	We now refine the above bounds. In order to do so we make use of the identities $4x^2y^2 = \kappa^2s^2$ and $x^2+y^2 = \sqrt{(\tilde \kappa^2-\kappa^2)^2 + \kappa^2 s^2}$ which arise since (by considering both real and imaginary parts of $z^2 = (x+iy)^2 = i\kappa s+\tilde \kappa^2-\kappa^2$) we have that
	\begin{align}
		\label{eq:2x^2and2y^2}
		2x^2 & = \sqrt{(\tilde \kappa^2-\kappa^2)^2 + \kappa^2 s^2} + \tilde \kappa^2 - \kappa^2, & 2y^2 & = \sqrt{(\tilde \kappa^2-\kappa^2)^2 + \kappa^2 s^2} - \tilde \kappa^2 + \kappa^2.
	\end{align}
	Now, if we make use of the substitution $\kappa^2 + x^2 = \tilde \kappa^2 + y^2$ for the terms involving hyperbolic functions and the substitution $\kappa^2-y^2=\tilde \kappa^2-x^2$ for the terms involving trigonometric functions, we obtain the following:
	\begin{align*}
		\tilde{g}_{\pm}(z;\delta,l) &> \left[(\tilde \kappa^2+2y^2)x + 2\kappa y\right] lx - \left|(\tilde \kappa^2-2x^2)y -2\kappa x\right| ly \\
		&\ge l \left( x^2(\tilde \kappa^2+2y^2) - y^2|\tilde \kappa^2-2x^2| \right) \\
		&= \left\lbrace \begin{array}{l}
			l \tilde \kappa^2 (x^2+y^2) \\
			l \left( 4x^2y^2 + \tilde \kappa^2(\tilde \kappa^2-\kappa^2) \right)
		\end{array} \right. &
		\begin{array}{l}
			\text{if } \tilde \kappa^2 \le 2x^2, \\
			\text{if } \tilde \kappa^2 > 2x^2,
		\end{array} \\
		&= \left\lbrace \begin{array}{l}
			l \tilde \kappa^2 \sqrt{(\tilde \kappa^2-\kappa^2)^2 + \kappa^2 s^2} \\
			l \left( \tilde \kappa^4 + \kappa^2(s^2-\tilde \kappa^2) \right)
		\end{array} \right. &
		\begin{array}{l}
			\text{if } \tilde \kappa^2 \le 2x^2, \\
			\text{if } \tilde \kappa^2 > 2x^2,
		\end{array}
	\end{align*}
	and
	\begin{align*}
		\begin{split}
			\tilde{g}(z;\delta,l) &> l(l+2) \left( x^2(\tilde \kappa^2+2y^2)^2 - y^2(\tilde \kappa^2-2x^2)^2 \right) \\
			& \mathrel{\phantom{=}} \mathrel+ 4l\kappa xy \left( \tilde \kappa^2+2y^2 - |\tilde \kappa^2-2x^2| \right)
		\end{split} \\
		\begin{split}
			&= l(l+2) \left( \tilde \kappa^4 (x^2-y^2) + 4x^2y^2(2\tilde \kappa^2+y^2-x^2)\right) \\
			& \mathrel{\phantom{=}} \mathrel+ 4l\kappa xy \left( \tilde \kappa^2+2y^2 - |\tilde \kappa^2-2x^2| \right)
		\end{split} \\
		&= \left\lbrace \begin{array}{l}
			l(l+2) \left( \tilde \kappa^4 (\tilde \kappa^2-\kappa^2) + 4x^2y^2(\tilde \kappa^2+\kappa^2)\right) \\
			\quad \mathrel+ 8l\kappa^3xy \\
			l(l+2) \left( \tilde \kappa^4 (\tilde \kappa^2-\kappa^2) + 4x^2y^2(\tilde \kappa^2+\kappa^2)\right) \\
			\quad \mathrel+ 8l\kappa xy(x^2+y^2)
		\end{array} \right. &
		\begin{array}{l}
			\text{if } \tilde \kappa^2 \le 2x^2, \\ \\
			\text{if } \tilde \kappa^2 > 2x^2,
		\end{array} \\
		&= \left\lbrace \begin{array}{l}
			l(l+2) \left( \tilde \kappa^6 + \kappa^4s^2 + \tilde \kappa^2\kappa^2(s^2-\tilde \kappa^2)\right) \\
			\quad \mathrel+ 4l\kappa^4s \\
			l(l+2) \left( \tilde \kappa^6 + \kappa^4s^2 + \tilde \kappa^2\kappa^2(s^2-\tilde \kappa^2)\right) \\
			\quad \mathrel+ 4l\kappa^2s\sqrt{(\tilde \kappa^2-\kappa^2)^2 + \kappa^2 s^2}
		\end{array} \right. &
		\begin{array}{l}
			\text{if } \tilde \kappa^2 \le 2x^2, \\ \\
			\text{if } \tilde \kappa^2 > 2x^2.
		\end{array}
	\end{align*}
	From the penultimate expression in each case we see that for evanescent modes $\tilde k > k$ (i.e., $\tilde \kappa > \kappa$) we always have $\tilde{g}_{\pm}(z;\delta,l) > 0$ and $\tilde{g}(z;\delta,l) > 0$. Furthermore, from the final expressions we see that all modes $\tilde k \le \sigma$ (i.e., $\tilde \kappa \le s$) also give the desired positivity. Thus we deduce that when $\sigma \ge k$ we have positivity for all modes $\tilde k$ and hence $R_{2d}^{\mathrm{H}} < 1$. We also remark that modes $\tilde k \le k$ which are relatively close to $k$ are identified as those giving the worst bounds, suggesting these are the most problematic modes for the algorithm.

	If $\sigma < k$ we may still have positivity of $\tilde{g}_{\pm}(z;\delta,l)$ and $\tilde{g}(z;\delta,l)$ for all modes so long as $x$ or $lx$ is large enough so that the hyperbolic term, which is always positive, is larger than the magnitude of the trigonometric term in both \eqref{eq:gpmgt} and \eqref{eq:g1dgt}. Using \eqref{eq:2x^2and2y^2} and converting back to the original variables we have that
	\begin{align}
		\label{eq:x}
		x &= 2\delta \sqrt{\frac{1}{2}\left(\sqrt{(k^2-\tilde k^2)^2+\sigma^2k^2} + \tilde k^2 - k^2\right)},
	\end{align}
	while $lx$ has an identical expression except with $2\delta$ replaced by $L$. Thus we see that, between the parameters $\sigma$, $\delta$ and $L$, so long as they are sufficiently large we will have $\tilde{g}_{\pm}(z;\delta,l) > 0$ and $\tilde{g}(z;\delta,l) > 0$ for all modes $\tilde k$ and thus $R_{2d}^{\mathrm{H}} < 1$ as desired.
\end{proof}

To verify these results, we compare numerically the spectral radius of the iteration matrix with the theoretical limit for different values of $\sigma$. We choose here $k = 30$, $L=1$, $\hat{L}=1$ and $\delta = L/10$. From \Cref{fig:s012d,fig:s12d} we see that, as predicted, the Schwarz algorithm is not convergent for all Fourier modes when $\sigma$ is small, but becomes convergent for $\sigma$ sufficiently large. In particular, we see in \Cref{fig:s12d} that the method can be convergent for $\sigma \ll k$. As expected from our theory, the algorithm always converges well for evanescent modes ($\tilde{k} > k$).

\begin{figure}
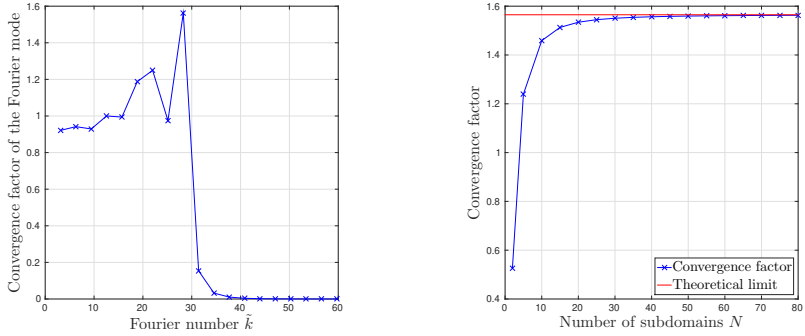

	\centering
	\hfill
	\includegraphics[width=0.4\textwidth,trim=10cm 0.8cm 10cm 1.6cm ,clip]{./FourierModeConvergence2D_k30_sigma0p1}
	\hfill
	\includegraphics[width=0.4\textwidth,trim=10cm 0.8cm 10cm 1.6cm ,clip]{./Convergence2D_k30_sigma0p1}
	\hfill
	\caption{The convergence factor of each Fourier mode for $N=80$ (left) and the convergence factor of the full Schwarz algorithm for varying number of subdomains $N$ (right) when $\sigma = 0.1$.}
	\label{fig:s012d}
\end{figure}

\begin{figure}
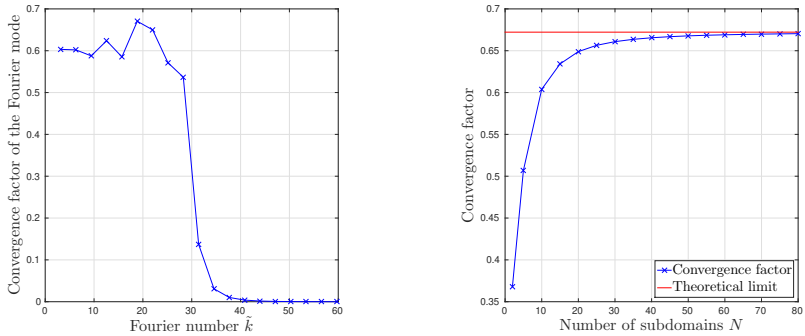

	\centering
	\hfill
	\includegraphics[width=0.4\textwidth,trim=10cm 0.8cm 10cm 1.6cm ,clip]{./FourierModeConvergence2D_k30_sigma1}
	\hfill
	\includegraphics[width=0.4\textwidth,trim=10cm 0.8cm 10cm 1.6cm ,clip]{./Convergence2D_k30_sigma1}
	\hfill
	\caption{The convergence factor of each Fourier mode for $N=80$ (left) and the convergence factor of the full Schwarz algorithm for varying number of subdomains $N$ (right) when $\sigma = 1$.}
	\label{fig:s12d}
\end{figure}

Similarly to the one-dimensional case we can also consider the question of $k$-robustness:

\begin{corollary}[A case of $k$-independent convergence]
	\label{corollary:k-indpendenceHelmholtz2D}
	Suppose $\alpha = ik$ (the case of classical impedance conditions) and that $\sigma = \sigma_{0} k$ for some constant $\sigma_{0}$. Consider a $k$-dependent domain decomposition given by $L = L_{0} k^{-1}$ and $\delta = \delta_{0} k^{-1}$, that is the subdomain size and overlap shrink inversely proportional to the wave number. Then the convergence factor $R_{2d}^{\mathrm{H}}$ can be bounded above by a $k$-independent value and this bound becomes tight as $k\rightarrow\infty$. As such, the convergence of the corresponding Schwarz method is ultimately independent of the wave number $k$ as it increases. Under the additional assumptions of \Cref{theorem:2dhelmholtz} for convergence (now on $\sigma_{0}$, $L_{0}$ and $\delta_{0}$), we thus deduce that the approach will ultimately be $k$-robust and independent of the number of subdomains.
\end{corollary}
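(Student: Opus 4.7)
The plan is to exploit the dimensional reduction already used in the proof of \Cref{corollary:k-indpendence1D}: the $k$-dependent rescaling $\alpha = ik$, $\sigma = \sigma_0 k$, $\delta = \delta_0 k^{-1}$, $L = L_0 k^{-1}$ makes the mode-wise iteration matrix depend on $\tilde k$ and $k$ only through the dimensionless ratio $\mu = \tilde k/k$. Substituting these expressions into $z = 2\delta\zeta(\tilde k)$, $\gamma = 2\delta\alpha$, and $l = L/(2\delta)$ from \eqref{eq:params}, and using \eqref{eq:lamk}, I obtain
\[
z = 2\delta_0\sqrt{i\sigma_0 + \mu^2 - 1}, \qquad \gamma = 2i\delta_0, \qquad l = \frac{L_0}{2\delta_0},
\]
each independent of $k$ at fixed $\mu$. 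Since $a$ and $b$ in \eqref{eq:ab0} are expressible entirely in terms of $z$, $\gamma$, $l$, the iteration matrix ${\cal T}_{1d}^{\mathrm{H}}(\tilde k)$, and hence the limiting spectral radius $R_{1d}^{\mathrm{H}}(\tilde k)$, depends on $\tilde k$ only through $\mu$. Denote this reparametrised quantity by $\tilde R(\mu)$.

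The Fourier modes take the discrete values $\tilde k = m\pi/\hat L$, which correspond to $\mu_m = m\pi/(\hat L k)$. Hence
\[
R_{2d}^{\mathrm{H}} = \sup_{m\ge 1} \tilde R(\mu_m) \;\le\; \sup_{\mu > 0} \tilde R(\mu) \;=:\; R_\infty,
\]
where $R_\infty$ is manifestly $k$-independent. The lattice $\{\mu_m\}_{m\ge 1}$ has spacing $\pi/(\hat L k)$, which tends to zero as $k\to\infty$, and $\tilde R$ is continuous in $\mu$ because $a$ and $b$ depend continuously on $z$ (hence on $\mu$), while the limiting spectrum from \Cref{theorem:LimitingSpectrum} depends continuously on $a, b$. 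Therefore the discrete supremum converges to $R_\infty$ as $k\to\infty$, establishing the claimed tightness. Moreover, the exponential factors in \eqref{eq:ab0} force $a, b \to 0$ as $\mu \to \infty$, so $R_\infty$ is attained at some finite $\mu^*$.

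Under the hypotheses on $\sigma_0, L_0, \delta_0$ corresponding to the assumptions of \Cref{theorem:2dhelmholtz} (for example, $\sigma_0 \ge 1$, which is equivalent to $\sigma \ge k$), the strict positivity of $\tilde g_\pm$ and $\tilde g$ established in that proof holds for every mode, giving $\tilde R(\mu) < 1$ for every $\mu > 0$. Since $R_\infty = \tilde R(\mu^*)$ is attained, strict inequality propagates to $R_\infty < 1$. Combined with \Cref{theorem:LimitingSpectrum}, which provides convergence to the limiting spectrum as the number of subdomains grows, this yields the announced $k$-robustness together with ultimate independence from the number of subdomains.

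The main obstacle is the passage from pointwise positivity $\tilde R(\mu) < 1$ to the uniform bound $R_\infty < 1$: a priori one might fear that $\tilde R(\mu) \to 1$ along some sequence of $\mu$. To rule this out, I would recast the positivity bounds from the proof of \Cref{theorem:2dhelmholtz} in the rescaled variables $\kappa = 2\delta_0$, $\tilde\kappa = 2\delta_0\mu$, $s = 2\delta_0\sigma_0$ (all $k$-independent) and verify that the resulting quantitative margins depend only on the fixed constants $\sigma_0, L_0, \delta_0$. Combined with the exponential decay of $a, b$ for large $\mu$, which confines the supremum to a compact subset of $(0,\infty)$, this would give the desired uniform separation of $R_\infty$ from one.
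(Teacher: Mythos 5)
Your proposal is correct and follows essentially the same route as the paper: you reparametrise the modes by the dimensionless ratio $\mu = \tilde{k}/k$ (the paper uses $\beta = \tilde{k}^2/k^2$, i.e., $\mu^2$), observe that $z$, $\gamma$, $l$ and hence $a$, $b$ depend on $k$ only through this ratio, bound the supremum over the discrete lattice of modes by the $k$-independent supremum over all positive $\mu$, and obtain tightness from the lattice becoming dense as $k\rightarrow\infty$. Your additional remarks on continuity, attainment of the supremum at a finite $\mu^{*}$, and uniform separation of $R_{\infty}$ from one are careful elaborations of points the paper leaves implicit, but they do not change the argument.
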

\begin{proof}
	The proof is similar to the one-dimensional case except that now we must consider the Fourier number $\tilde{k}$. To do so, we let $\tilde{k}^2 = \beta k^2$. In this scenario, the coefficients $a$ and $b$ of the iteration matrix depend on $k$ only through $\beta$. However, in the final convergence factor $R_{2d}^{\mathrm{H}}$ we take the supremum over all $\tilde{k}$, namely now over a discrete set of positive $\beta$ values. This is bounded above by the supremum over all $\beta \in \mathbb{R}^{+}$, which is then independent of $k$, the supremum being finite since the bounds derived in \Cref{theorem:2dhelmholtz} do not rely on the discrete nature of $\tilde{k}$ and so can be readily applied, translated into $\beta$. Note that as $k\rightarrow\infty$ the discrete set of $\beta$ values becomes dense in $\mathbb{R}^{+}$ so this supremum bound becomes tight. Thus we will ultimately have $k$-robustness. Combining with \Cref{theorem:2dhelmholtz} we further obtain that ultimately the convergence will also be independent of the number of subdomains.
\end{proof}

\begin{remark}
	\label{remark:tildek-supremumHelmholtz2D}
	We note an empirical observation that, for reasonable values of $\sigma$, $\delta$ and $L$ (namely when these parameters are not too small, essentially the same conditions required for convergence, but also neither of $\delta$ or $\sigma$ being too large), the value of $\tilde{k}$ giving the supremum of $R_{1d}^{\mathrm{H}}(\tilde k)$ lies in a small neighbourhood around $k$ (equivalent to $\beta = 1$ in the above proof). This is consistent with other works in the literature, e.g., \cite{Gander:2002:OSM,Conen:2015}, where the most problematic modes are those close to the cut-off $k$. In this case, a series expansion around $\tilde{k} = k$ shows that $k \delta$ and $k L$ being fixed are the requirements on the domain decomposition parameters in order for the algorithm to be $k$-independent; see the supplementary \texttt{Maple} worksheets.
\end{remark}

For more general theory on $k$-robustness of the one-level method and rigorous GMRES bounds, see \cite{Graham:2020:DDI}. As in the one-dimensional case, we can link $k$ and $N$ so that we consider solving on a fixed domain a family of problems with increasing wave number using an increasing number of subdomains and, under the conditions of \Cref{theorem:2dhelmholtz} and  \Cref{corollary:k-indpendenceHelmholtz2D}, our theory shows that the Schwarz algorithm will ultimately be $k$-robust and weakly scalable.

\begin{remark}
	We have focused here on the case of an overlapping domain decomposition. While the algorithm can also work in the non-overlapping case, it typically has a very poor behaviour. It is known from the literature (for example by setting the parameters to zero in formula (3.2) from \cite{Gander:2002:OSM}) that if $\sigma=0$ in the case of a decomposition into two subdomains, the purely iterative algorithm does not converge for evanescent modes ($\tilde k >k$), the convergence factor being equal to $1$. By increasing $\sigma$, the convergence factor can be lowered but only a little (it remains close to one) and the algorithm continues to have very poor convergence properties for evanescent modes. This is illustrated in \Cref{fig:rhonovr}, where we take the same parameter values as in our previous results ($k = 30$, $L=1$ and $\hat{L}=1$), and can be proven by similar techniques to those used in the overlapping case.

	\begin{figure}
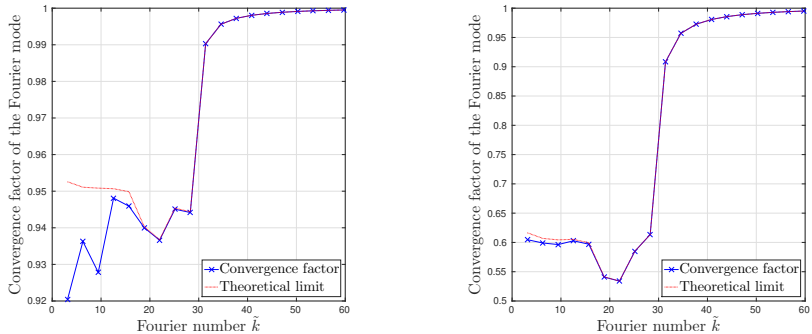

		\centering
		\hfill
		\includegraphics[width=0.4\textwidth,trim=10cm 0.8cm 10cm 1.6cm ,clip]{./NonoverlappingFourierModeConvergence2D_k30_sigma0p1}
		\hfill
		\includegraphics[width=0.4\textwidth,trim=10cm 0.8cm 10cm 1.6cm ,clip]{./NonoverlappingFourierModeConvergence2D_k30_sigma1}
		\hfill
		\caption{The convergence factor of each Fourier mode in the non-overlapping case for $N=80$ and $\sigma=0.1$ (left) and $\sigma=1$ (right).}
		\label{fig:rhonovr}
	\end{figure}
\end{remark}

We also note a fundamental difference between the one-dimensional and two-dimensional cases from the scalability point of view. Whereas in the first case independence to the number of subdomains is achieved simply by taking $\sigma>0$, in the two-dimensional case things become more complex. This is consistent with previous convergence studies, starting from that in the seminal work on optimised transmission conditions \cite{Gander:2002:OSM}, where it has been observed that propagative and evanescent modes behave differently and the iterative algorithm does not converge for the cut-off frequency $k$. The maximum of the convergence factor is usually attained in a neighbourhood of $\tilde{k}= k$ and can be made sufficiently small when $\sigma$ is taken large enough; in this case we can achieve scalability and $k$-robustness. We note that this kind of discrepancy, between one- and two-dimensional problems, is typical for the Helmholtz equation and cannot be observed in the case of the Laplace equation.

\subsection{The transverse electric Maxwell's equations}
\label{sec:2dmaxwell}
We now apply the same ideas to the transverse electric Maxwell's equations with damping in the frequency domain. For an electric field $\mathbf{E}=(E_x,E_y)$, these equations are expressed as
\begin{align}\label{eq:2d0}
	\begin{split}
		& {\cal L}\mathbf{E} \vcentcolon= -k^2\mathbf{E}+ \nabla \times (\nabla \times \mathbf{E}) +ik \sigma \mathbf{E} = \mathbf{0} \\
		& \Leftrightarrow \left\{\begin{array}{r@{}l}
			- k^2 E_x - \partial_{yy} E_x + \partial_{xy} E_y + ik\sigma E_x &{}= 0, \\
			- k^2 E_y - \partial_{xx} E_y + \partial_{xy} E_x + ik\sigma E_y &{}= 0,
		\end{array}\right.
	\end{split}
\end{align}
for $(x,y) \in \Omega$. The boundary conditions on the top and bottom boundaries ($y=0$ and $y=\hat L$) are perfect electric conductor (PEC) conditions, the equivalent of Dirichlet conditions for Maxwell's equations:
\begin{align}\label{eq:2dPEC}
	\mathbf{E} \times \mathbf{n} = \mathbf{0} \Leftrightarrow E_x &= 0, & y &= \{0, \hat L\}.
\end{align}
On the left and right boundaries ($x=a_1$ and $x=b_N$) we use impedance boundary conditions\footnote{Note that in rewriting the impedance conditions we can use the three-dimensional definition of the operators, i.e., $\mathbf{E}=(E_x,E_y,0)$ and $\mathbf{n}=(1,0,0)$ for the right boundary and $\mathbf{n}=(-1,0,0)$ for the left boundary.}:
\begin{align}\label{eq:2dimp}
	\begin{split}
		& (\nabla \times \mathbf{E} \times \mathbf{n}) \times \mathbf{n} + ik \mathbf{E} \times \mathbf{n} = \mathbf{g} \\
		& \Leftrightarrow \left\{\begin{array}{r@{}ll}
			{\cal B}_l\mathbf{E} \vcentcolon= (-\partial_x + ik) E_y + \partial_y E_x &{}= g_1, & x = a_1,\\
			{\cal B}_r\mathbf{E} \vcentcolon= (\partial_x + ik) E_y - \partial_y E_x &{}= - g_2, & x = b_N.
		\end{array}\right.
	\end{split}
\end{align}
The same conditions will be used at the interfaces between subdomains, akin to the classical algorithm defined in \cite{Despres:1992:DDM}. The Maxwell problem \eqref{eq:2d0}--\eqref{eq:2dimp} constitutes a {\it ``two-dimensional wave-guide''} model.

Let us denote by $\mathbf{E}_j^n$ the approximation to the solution in subdomain $j$ at iteration $n$. Starting from an initial guess $\mathbf{E}_j^0$, we compute $\mathbf{E}_j^n$ from the previous values $\mathbf{E}_j^{n-1}$ by solving the following local boundary value problems
\begin{align}\label{eq:sch2d}
	\left\{\begin{array}{r@{}ll}
		{\cal L} \mathbf{E}_j^n &{}= \mathbf{0}, & x \in \Omega_j,\\
		{\cal B}_l\mathbf{E}_j^n &{}= {\cal B}_l \mathbf{E}_{j-1}^{n-1}, & x = a_j,\\
		{\cal B}_r\mathbf{E}_j^n &{}= {\cal B}_r \mathbf{E}_{j+1}^{n-1}, & x = b_j,\\
		E_{x,j}^n &{}= 0, & y \in \{0,\hat L\},
	\end{array}\right.
\end{align}
for the interior subdomains ($1 < j < N$), while for the first ($j=1$) and last ($j=N$) subdomain we impose ${\cal B}_l \mathbf{E}_1^n {}= g_1$ when $x = a_1$ and ${\cal B}_r \mathbf{E}_N^n {}= -g_2$ when $x = b_N$. To study the convergence of the Schwarz algorithm we define the local error in each subdomain $j$ at iteration $n$ as $\mathbf{e}^n_j = \mathbf{E}|_{\Omega_j} - \mathbf{E}^n_j$. Note that these errors verify boundary value problems which are the homogeneous counterparts of \eqref{eq:sch2d}.

Due to the PEC boundary conditions on the top and bottom boundaries of each rectangular subdomain we can use the following Fourier series ansatzes to compute the local solutions of ${\cal L}\mathbf{e}^n_j = 0$:
\begin{align}
	\label{eq:Fourier2}
	e_{x,j}^n &= \sum_{m=1}^{\infty} v^n_j(x,\tilde k) \sin(\tilde k y), & e_{y,j}^n &= \sum_{m=1}^{\infty} w^n_j(x,\tilde k) \cos(\tilde k y), & \tilde k &= \frac{m\pi}{\hat L}, \ m \in \mathbb{N}.
\end{align}
By plugging the expressions for $e_{x,j}^n$ and $e_{y,j}^n$ into ${\cal L}\mathbf{e}^n_j = \mathbf{0}$, a simple computation shows that, for each Fourier number $\tilde k$, we have the general solutions
\begin{align}
	\label{eq:vjwj}
	v^n_j(x,\tilde k) &= -\alpha^n_j \frac{\tilde k}{{\zeta}} e^{-\zeta x} + \beta^n_j \frac{\tilde k}{{\zeta}} e^{\zeta x}, & w^n_j(x,\tilde k) &= \alpha^n_j e^{-\zeta x} + \beta^n_j e^{\zeta x},
\end{align}
where $\zeta(\tilde k) = \sqrt{ik\sigma+\tilde k^2-k^2}$. From these formulae we can see easily that
\begin{align}
	\label{eq:vwrel}
	\partial_x v^n_j &= \tilde k w^n_j, & \partial_x w^n_j &= \frac{\zeta^2}{\tilde k}v^n_j.
\end{align}
In order to benefit again from the analysis in the one-dimensional case, we first prove the following result.
\begin{lemma}[Maxwell reduction]
	For each Fourier number $\tilde k$, we have that both $v^n_j(x,\tilde k)$ and $w^n_j(x,\tilde k)$ are solutions of the following one-dimensional problem:
	\begin{align}
		\label{eq:2dmax}
		\left\{ \begin{array}{r@{}ll}
			(ik\sigma+\tilde k^2 -k^2) u_j^{n} - \partial_{xx}u_j^{n} &{}= 0, & x \in (a_j,b_j), \\
			{\cal B}_{l,\sigma} u_j^{n}(x,\tilde k) &{}= {\cal B}_{l,\sigma} u_{j-1}^{n-1}(x,\tilde k), & x = a_j, \\
			{\cal B}_{r,\sigma} u_j^{n}(x,\tilde k) &{}= {\cal B}_{r,\sigma} u_{j+1}^{n-1}(x,\tilde k), & x = b_j,
		\end{array} \right.
	\end{align}
	where ${\cal B}_{l,\sigma} = -\partial_x + ik + \sigma$ and ${\cal B}_{r,\sigma} = \partial_x + ik + \sigma$.
\end{lemma}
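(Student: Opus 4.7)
The plan is to exploit the Fourier decomposition~\eqref{eq:Fourier2} to reduce the two-dimensional local Maxwell problem for $\mathbf{e}_j^n$ mode by mode, showing that each Fourier coefficient satisfies a 1D problem of exactly the form analysed in \Cref{sec:1dcase}, but with the impedance parameter $\alpha = ik$ shifted to $ik+\sigma$. This shift is where all the subtlety lies: it appears because the Maxwell interface operator couples the two field components $e_x$ and $e_y$, and when expressed in Fourier modes and simplified using the intra-subdomain relations~\eqref{eq:vwrel}, the coupling manufactures the extra $\sigma$ in the effective Robin parameter.

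The ODE step is immediate: by~\eqref{eq:vjwj}, both $v_j^n$ and $w_j^n$ are linear combinations of $e^{\pm\zeta x}$ with $\zeta^2 = ik\sigma + \tilde k^2 - k^2$, so each satisfies $-\partial_{xx}u + \zeta^2 u = 0$ on $(a_j,b_j)$. For the interface conditions, I would project the vector equation ${\cal B}_l\mathbf{e}_j^n = {\cal B}_l\mathbf{e}_{j-1}^{n-1}$ at $x=a_j$ onto $\cos(\tilde k y)$, obtaining the scalar identity
\[
-\partial_x w_j^n + ik w_j^n + \tilde k v_j^n \;=\; -\partial_x w_{j-1}^{n-1} + ik w_{j-1}^{n-1} + \tilde k v_{j-1}^{n-1}
\quad\text{at }x=a_j.
\]
Substituting $\partial_x w = (\zeta^2/\tilde k)v$ from~\eqref{eq:vwrel} on both sides and invoking the key algebraic identity $\tilde k^2 - \zeta^2 = k^2 - ik\sigma = -ik(ik+\sigma)$, the left-hand side factors as $-(ik/\tilde k)\,{\cal B}_{l,\sigma} v_j^n$ and similarly on the right, so cancellation yields the $v$-condition ${\cal B}_{l,\sigma} v_j^n(a_j) = {\cal B}_{l,\sigma} v_{j-1}^{n-1}(a_j)$. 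The same manipulation at $x=b_j$ using ${\cal B}_r$ gives the ${\cal B}_{r,\sigma}$ version on the right interface.

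For the analogous conditions on $w$, I would apply the dual substitution $\tilde k v = (\tilde k^2/\zeta^2)\partial_x w$ to the same Fourier-projected equation and then exploit the fact that $w$ itself is a solution of the 1D ODE to re-express the result in terms of ${\cal B}_{l,\sigma} w$ (and ${\cal B}_{r,\sigma} w$ at $b_j$). The main obstacle is the algebraic bookkeeping in this $w$ step: the identity $\tilde k^2 - \zeta^2 = -ik(ik+\sigma)$ is precisely what injects the extra $\sigma$ into the effective Robin parameter, and applying it consistently while switching between the $v$ and $w$ representations requires careful tracking of signs and of the distinct roles the two field components play in the Maxwell coupling---this is presumably where the supplementary \texttt{Maple} worksheets advertised at the end of the introduction come into play. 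Once the 1D interface conditions are established for both $v$ and $w$, each Fourier mode of the Maxwell iteration is reduced to a one-dimensional Schwarz problem of the type in \Cref{sec:1dcase} with $\alpha\mapsto ik+\sigma$, which is exactly what is needed to invoke \Cref{lemma:basic1d,lemma:LimitingSpectralRadius1D} in the 2D Maxwell convergence theorem that will follow.
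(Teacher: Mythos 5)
Your treatment of the $v$ component is exactly the paper's argument: project the Maxwell impedance trace onto $\cos(\tilde k y)$, use the relations \eqref{eq:vwrel} to eliminate $w$ and $\partial_x w$, and invoke $\tilde k^2-\zeta^2=-ik(ik+\sigma)$ to factor the result as $-(ik/\tilde k)\,{\cal B}_{l,\sigma}v_j^n$ (resp. $(ik/\tilde k)\,{\cal B}_{r,\sigma}v_j^n$ on the right). That computation is correct and is all the paper itself establishes; the subsequent application (invoking \Cref{lemma:basic1d} with $\alpha\mapsto ik+\sigma$) only needs the equivalence for one of the two Fourier coefficients, and the paper tracks $v$. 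Your shortcut for the ODE step via \eqref{eq:vjwj} is legitimate since those general solutions are derived before the lemma, whereas the paper instead observes $\partial_x e_{x,j}^n+\partial_y e_{y,j}^n=0$ and uses it to decouple ${\cal L}\mathbf{e}_j^n=\mathbf{0}$; this is a cosmetic difference.

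The one step in your plan that would fail is the proposed verification of the interface condition for $w$. Applying the dual substitution $\tilde k v=(\tilde k^2/\zeta^2)\partial_x w$ to the projected trace $-\partial_x w+ikw+\tilde k v$ gives
\begin{align*}
\frac{\tilde k^2-\zeta^2}{\zeta^2}\,\partial_x w+ik\,w \;=\; \frac{ik(ik+\sigma)}{\zeta^2}\left[-\partial_x w+\frac{\zeta^2}{ik+\sigma}\,w\right],
\end{align*}
i.e., a Robin operator on $w$ with parameter $\zeta^2/(ik+\sigma)=ik+\tilde k^2/(ik+\sigma)$, which equals $ik+\sigma$ only if $\tilde k^2=\sigma(ik+\sigma)$. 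Since the Maxwell transmission condition imposes only one scalar relation per mode per interface, and ${\cal B}_{r,\sigma}w$ is a genuinely different linear combination of the traces of $v$ and $w$ than ${\cal B}_{r,\sigma}v$, the transfer condition for $w$ with parameter $ik+\sigma$ does not follow from the one for $v$; knowing that $w$ solves the 1D ODE does not repair this. Note, however, that the paper's own proof also stops at the $v$ equivalence (its concluding sentence is only about ${\cal B}_{r,\sigma}v_j^n$), so this does not affect the downstream convergence theorem; you should simply drop the $w$ verification rather than defer it to bookkeeping.
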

\begin{proof}
	Let us notice first that, because of \eqref{eq:vwrel}, we have
	\begin{align*}
		\partial_x e_{x,j}^n+ \partial_y e_{y,j}^n = \sum_{m=1}^{\infty} \left(\partial_x v_{j}^n - \tilde k w_{j}^n\right)\sin(\tilde k y) = 0.
	\end{align*}
	If we use this in the error equation ${\cal L} \mathbf{e}^n_j = \mathbf{0}$ we obtain that both $v^n_j(x,\tilde k)$ and $w^n_j(x,\tilde k)$ satisfy, for each $\tilde k$, the one-dimensional equation $(ik\sigma+\tilde k^2-k^2) u_j^{n} - \partial_{xx} u_j^{n} = 0$. Let us analyse now the boundary conditions. With the help of \eqref{eq:vwrel}, we consider the right boundary and note that the left one can be treated similarly:
	\begin{align*}
		{\cal B}_r \mathbf{e}^n_j &= (\partial_x + ik) e_{y,j}^n -\partial_y e_{x,j}^n = \sum_{m=1}^{\infty} ( (\partial_x +ik)w_{j}^n -\tilde k v_{j}^n) \cos(\tilde k y) \\
		&= \sum_{m=1}^{\infty} \left ( \frac{ik}{\tilde k} \partial_x v_{j}^n + \left(\frac{\zeta^2}{\tilde k} -\tilde k\right ) v_{j}^n\right) \cos(\tilde k y) = \sum_{m=1}^{\infty} \frac{ik}{\tilde k} {\cal B}_{r,\sigma} v_{j}^n \cos(\tilde k y).
	\end{align*}
	Thus, imposing transfer of boundary data with ${\cal B}_r \mathbf{e}^n_j$ is equivalent to that with ${\cal B}_{r,\sigma} v^n_j$, for each Fourier number $\tilde k$.
\end{proof}

It is now clear that the analysis of the two-dimensional case can again be derived from the one-dimensional case. That is, the result from \Cref{lemma:basic1d} applies here if we replace $\alpha$ with $ik+\sigma$ and with $\zeta$ being defined by \eqref{eq:lamk}. Let us denote the resulting iteration matrix, for each $\tilde k$, by ${\cal T}_{1d}^{\mathrm{M}}(\tilde k)$ and let $R_{1d}^{\mathrm{M}}(\tilde k) \vcentcolon= \lim_{N\rightarrow\infty} \rho({\cal T}_{1d}^{\mathrm{M}}(\tilde k))$ with $R_{2d}^{\mathrm{M}} = \sup_{\tilde k} R_{1d}^{\mathrm{M}}(\tilde k)$. We can now state our main convergence result for the two-dimensional Maxwell problem.

\begin{theorem}[Convergence of the Schwarz algorithm for Maxwell in 2D]
	\label{theorem:2dmaxwell}
	For all $k>0$, $\sigma>0$, $\delta>0$ and $L>0$ we have that $R_{1d}^{\mathrm{M}}(\tilde k) < 1$ for all evanescent modes $\tilde k > k$. Furthermore, under the assumption that between them $\sigma$, $\delta$ and $L$ are sufficiently large we have that $R_{2d}^{\mathrm{M}} < 1$. In particular, this is true when $\sigma \ge k$ for all $\delta>0$ and $L>0$. Therefore the convergence will ultimately be independent of the number of subdomains (we say that the Schwarz method will scale).
\end{theorem}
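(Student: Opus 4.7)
The plan is to follow the template of the proof of \Cref{theorem:2dhelmholtz} but accounting for the modified impedance parameter revealed by the Maxwell reduction lemma, namely $\alpha = ik + \sigma$ in place of $\alpha = ik$. First, by the Maxwell reduction, for each Fourier number $\tilde{k}$ the iteration matrix ${\cal T}_{1d}^{\mathrm{M}}(\tilde k)$ has the block Toeplitz structure of \eqref{NonHermitianBlockToeplitzStructure} with $\zeta(\tilde k) = \sqrt{ik\sigma + \tilde k^2 - k^2}$ and the new $\alpha$. Then \Cref{lemma:LimitingSpectralRadius1D} reduces the problem to verifying positivity of $g_\pm(z;\delta,l)$ and $g(z;\delta,l)$ from \eqref{eq:g}--\eqref{eq:ga}, which in turn boils down to studying the quantity $v = (z-\gamma)/(z+\gamma)$ with $\gamma = 2\delta(ik+\sigma) = s + i\kappa$ (using the scalings $\kappa = 2\delta k$, $\tilde{\kappa} = 2\delta\tilde k$, $s = 2\delta\sigma$ from \Cref{theorem:2dhelmholtz}).

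Next I would compute $v$ explicitly. With $z = x+iy$ and $\gamma = s + i\kappa$ we have
\begin{align*}
\Re v &= \frac{x^2 + y^2 - s^2 - \kappa^2}{(x+s)^2 + (y+\kappa)^2}, & \Im v &= \frac{2(sy - \kappa x)}{(x+s)^2 + (y+\kappa)^2}, & |v|^2 &= \frac{(x-s)^2 + (y-\kappa)^2}{(x+s)^2 + (y+\kappa)^2} < 1,
\end{align*}
the final strict inequality holding because $s, \kappa, x, y > 0$. Substituting these expressions into \eqref{eq:g} and \eqref{eq:ga} and clearing a common positive denominator produces reduced quantities $\tilde g_\pm(z;\delta,l)$ and $\tilde g(z;\delta,l)$ whose positivity is equivalent to that of $g_\pm$ and $g$. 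This parallels \eqref{eq:gpm} and \eqref{eq:g1d}, but the factors $\kappa^2 \pm (x^2+y^2)$ appearing there are replaced by expressions also involving $s$, reflecting the extra real part in $\gamma$.

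At this point the key identity is the one already exploited in the Helmholtz case: since $z^2 = i\kappa s + \tilde{\kappa}^2 - \kappa^2$, the real part yields $x^2 - y^2 = \tilde{\kappa}^2 - \kappa^2$, and the imaginary part yields $2xy = \kappa s$. For evanescent modes $\tilde{\kappa} > \kappa$ we therefore have $x > y$, which biases the hyperbolic terms over the trigonometric ones. I would then apply the elementary bounds \eqref{eq:HypTrigBounds} term by term, exactly as in \Cref{theorem:2dhelmholtz}, to lower bound the hyperbolic contributions and upper bound the trigonometric contributions in magnitude. The presence of $s > 0$ in $\gamma$ will only strengthen these bounds relative to the Helmholtz case, so the analogous chain of inequalities will yield $\tilde g_\pm(z;\delta,l) > 0$ and $\tilde g(z;\delta,l) > 0$ for all evanescent modes, giving $R_{1d}^{\mathrm{M}}(\tilde k) < 1$.

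For the second statement, I would refine the bounds in the spirit of the Helmholtz case using both identities together with \eqref{eq:2x^2and2y^2} to split into subcases $\tilde{\kappa}^2 \le 2x^2$ and $\tilde{\kappa}^2 > 2x^2$. The target is to produce final lower bounds of the form (schematically) $l(\tilde{\kappa}^4 + \kappa^2(s^2 - \tilde{\kappa}^2))$ plus additional strictly positive terms coming from $s > 0$ inside $\gamma$, from which positivity for all modes with $\tilde{\kappa} \le s$ follows, and in particular positivity for every mode when $s \ge \kappa$, i.e.\ $\sigma \ge k$. The remaining gap, modes $s < \tilde{k} < k$, is handled exactly as in \Cref{theorem:2dhelmholtz}: using the explicit formula \eqref{eq:x} for $x$ (and its analogue $lx$), taking $\sigma$, $\delta$ or $L$ large enough makes the hyperbolic terms dominate and guarantees $R_{2d}^{\mathrm{M}} < 1$. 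The main obstacle is purely algebraic bookkeeping: the extra real part of $\gamma$ introduces cross terms involving $s$, $\kappa$ and the products $sx$, $\kappa y$, and verifying that after simplification via the identities for $x^2$, $y^2$ and $xy$ these cross terms either cancel or contribute with the favourable sign, as happened in the Helmholtz case, is the crux of the computation; I expect it to go through cleanly because $|v| < 1$ strictly (unlike Helmholtz where $|v| < 1$ required $y > 0$), giving slightly stronger baseline control.
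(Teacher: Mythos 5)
Your proposal is correct and follows essentially the same route as the paper's proof: the Maxwell reduction with $\alpha = ik+\sigma$, the identical formulae for $\Re v$, $\Im v$ and $|v|^2<1$, the identity $x^2-y^2=\tilde\kappa^2-\kappa^2$ with the elementary bounds to dispatch evanescent modes, the refined case-split bounds to cover $\tilde\kappa\le s$ (hence $\sigma\ge k$), and the large-$x$/$lx$ argument for the remaining modes. The only divergence is cosmetic: the paper's case analysis is on the signs of $s^2+\tilde\kappa^2-2x^2$ and $\kappa x - sy$ jointly rather than on $\tilde\kappa^2$ versus $2x^2$ alone, and the resulting bounds give positivity for all $\tilde\kappa^2\ge\kappa^2-s^2$, which subsumes your two separate regimes.
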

\pagebreak
\begin{proof}
	By \Cref{lemma:LimitingSpectralRadius1D} we see that it is enough to study the sign of $g_{\pm}(z;\delta,l)$ and $g(z;\delta,l)$. To assist, we use the scaled notation $\kappa = 2\delta k$, $\tilde \kappa = 2\delta \tilde k$ and $s = 2\delta\sigma$ akin to \eqref{eq:params}. We can see that if $\alpha = ik +\sigma$ then for $z\vcentcolon= x+iy$ \eqref{eq:v} becomes
	\begin{align*}
		\Re v &= \frac{-\kappa^2 - s^2 + x^2 + y^2}{(\kappa+y)^2 + (s+x)^2}, &
		\Im v &= \frac{2sy - 2\kappa x}{(\kappa+y)^2 + (s+x)^2}, &
		|v|^2 &= \frac{(\kappa-y)^2 + (s-x)^2}{(\kappa+y)^2 + (s+x)^2},
	\end{align*}
	where $|v|^2 < 1$. We can now simplify $g_{\pm}(z;\delta,l)$ in \eqref{eq:g} using these formulae to give
	\begin{subequations}
		\label{eq:gpm2d}
		\begin{align}
			\label{eq:gpm2dg}
			g_{\pm}(z;\delta,l) &= \frac{4e^{x(l+1)}}{(\kappa+y)^2 + (s+x)^2} \tilde{g}_{\pm}(z;\delta,l) \\
			\begin{split}
				\label{eq:gpm2dgt}
				\tilde{g}_{\pm}(z;\delta,l) &= [(\kappa^2+s^2+x^2+y^2) \sinh(x) + 2(\kappa y+sx) \cosh(x)]\sinh(lx) \\
				& \mathrel{\phantom{=}} \mathrel\pm [(\kappa^2+s^2-x^2-y^2) \sin(y) + 2(sy-\kappa x) \cos(y)]\sin(ly).
			\end{split}
		\end{align}
	\end{subequations}
	Proceeding as before, using $\tilde \kappa^2 - \kappa^2 = x^2 - y^2$ and the bounds \eqref{eq:HypTrigBounds}, we derive that
	\begin{align*}
		\tilde{g}_{\pm}(z;\delta,l) &> l (\kappa^2+s^2+2s+x^2+y^2) (\tilde \kappa^2-\kappa^2)
	\end{align*}
	which is positive for all evanescent modes $\tilde k > k$. Similarly, simplifying $g(z;\delta,l)$ in \eqref{eq:ga} we find that
	\begin{subequations}
		\label{eq:g2d}
		\begin{align}
			\label{eq:g2dg}
			g(z;\delta,l) &= \frac{4e^{2x(l+1)}}{((\kappa+y)^2 + (s+x)^2)^2} \tilde{g}(z;\delta,l) \\
			\begin{split}
				\label{eq:g2dgt}
				\tilde{g}(z;\delta,l) &= \bigl[ ((\kappa^2+s^2+x^2+y^2)^2+4(\kappa y+sx)^2) \sinh(x(l+2)) \\
				& \qquad\qquad \mathrel+ 4(\kappa y+sx)(\kappa^2+s^2+x^2+y^2) \cosh(x(l+2)) \bigr] \sinh(lx) \\
				& \mathrel{\phantom{=}} \mathrel+ \bigl[ ((-\kappa^2-s^2+x^2+y^2)^2-4(\kappa x-sy)^2) \sin(y(l+2)) \\
				& \qquad\qquad \mathrel+ 4(\kappa x-sy)(-\kappa^2-s^2+x^2+y^2) \cos(y(l+2)) \bigr] \sin(ly),
			\end{split}
		\end{align}
	\end{subequations}
	from which we can obtain the bound
	\begin{align*}
		\begin{split}
			\tilde{g}(z;\delta,l) &> l(l+2) \left( (\kappa^2+s^2+x^2+y^2)^2 + 4s^2(x^2+y^2) + 8\kappa sxy \right) (\tilde \kappa^2-\kappa^2) \\
			& \mathrel{\phantom{=}} \mathrel+ 4ls \left( \kappa^2+s^2+x^2+y^2 \right) (\tilde \kappa^2-\kappa^2).
		\end{split}
	\end{align*}
	Again, this is positive for all evanescent modes and thus we deduce that $R_{1d}^{\mathrm{M}}(\tilde k) < 1$ for all $\tilde k > k$.

	We now refine these bounds, as in the proof of \Cref{theorem:2dhelmholtz} and using the same identities and substitutions. For $g_{\pm}(z;\delta,l)$ we first obtain
	\begin{align*}
		\tilde{g}_{\pm}(z;\delta,l) &> l \left( x^2(s^2+\tilde \kappa^2+2y^2) - y^2\left|s^2+\tilde \kappa^2-2x^2\right| + 2x(\kappa y+sx) - 2y\left|\kappa x-sy\right| \right),
	\end{align*}
	and split into four cases based on the sign of each term we take the absolute value of. Consider first the case $s^2 + \tilde \kappa^2 \le 2x^2$ and $\kappa x \le sy$, then
	\begin{align*}
		\tilde{g}_{\pm}(z;\delta,l) &> l \left( (s^2+\tilde \kappa^2)(x^2+y^2) + 4\kappa xy + 2s(x^2-y^2) \right) \\
		&= l \left( (s^2+\tilde \kappa^2)\sqrt{(\tilde \kappa^2-\kappa^2)^2 + \kappa^2 s^2} + 2\tilde \kappa^2s \right).
	\end{align*}
	Now consider the case $s^2 + \tilde \kappa^2 > 2x^2$ and $\kappa x > sy$ where we find that
	\begin{align*}
		\tilde{g}_{\pm}(z;\delta,l) &> l \left( 4x^2y^2 + (s^2+\tilde \kappa^2)(x^2-y^2) + 2s(x^2+y^2) \right) \\
		&= l \left( \tilde \kappa^2(s^2+\tilde \kappa^2-\kappa^2) + 2s\sqrt{(\tilde \kappa^2-\kappa^2)^2 + \kappa^2 s^2} \right).
	\end{align*}
	The remaining cases follow as combinations of the previous two cases and we deduce, in the case $s^2 + \tilde \kappa^2 \le 2x^2$ and $\kappa x > sy$, that
	\begin{align*}
		\tilde{g}_{\pm}(z;\delta,l) &> l (s^2+\tilde \kappa^2+2s)\sqrt{(\tilde \kappa^2-\kappa^2)^2 + \kappa^2 s^2},
	\end{align*}
	while the case $s^2 + \tilde \kappa^2 > 2x^2$ and $\kappa x \le sy$ gives
	\begin{align*}
		\tilde{g}_{\pm}(z;\delta,l) &> l \tilde \kappa^2(s^2+\tilde \kappa^2-\kappa^2+2s).
	\end{align*}
	Turning to $\tilde{g}(z;\delta,l)$, we first derive that
	\begin{align*}
		\begin{split}
			\tilde{g}(z;\delta,l) &> l(l+2) \bigl[ x^2((s^2+\tilde \kappa^2+2y^2)^2+4(\kappa y+sx)^2) \\
			& \mathrel{\phantom{=}} \mathrel- y^2((s^2+\tilde \kappa^2-2x^2)^2+4(\kappa x-sy)^2) \bigr] \\
			& \mathrel{\phantom{=}} \mathrel+ 4l \left( x(\kappa y+sx)(s^2+\tilde \kappa^2+2y^2) - y\left| (\kappa x-sy)(s^2+\tilde \kappa^2-2x^2) \right| \right),
		\end{split}
	\end{align*}
	from which we see that we need to analyse just two sets of combined cases. First consider when both $s^2 + \tilde \kappa^2 \le 2x^2$ and $\kappa x \le sy$ or both $s^2 + \tilde \kappa^2 > 2x^2$ and $\kappa x > sy$, yielding
	\begin{align*}
		\begin{split}
			\tilde{g}(z;\delta,l) &> l(l+2) \bigl[ (s^2+\tilde \kappa^2)^2(x^2-y^2) + 4x^2y^2(2s^2+2\tilde \kappa^2+y^2-x^2) \\
			& \mathrel{\phantom{=}} \mathrel+ 4s(x^2+y^2)(2\kappa xy + s(x^2-y^2)) \bigr] + 4l (x^2+y^2)(2\kappa xy + s(s^2+\tilde \kappa^2))
		\end{split} \\
		\begin{split}
			&= l(l+2) \Bigl[ \kappa^2s^2(s^2+\kappa^2) + \tilde \kappa^2(s^2+\tilde \kappa^2)(s^2+\tilde \kappa^2-\kappa^2) \\
			& \mathrel{\phantom{=}} \mathrel+ 4\tilde \kappa^2s^2\sqrt{(\tilde \kappa^2-\kappa^2)^2 + \kappa^2 s^2} \Bigr] + 4ls (s^2+\tilde \kappa^2+\kappa^2)\sqrt{(\tilde \kappa^2-\kappa^2)^2 + \kappa^2 s^2}.
		\end{split}
	\end{align*}
	On the other hand, in the second set of cases when both $s^2 + \tilde \kappa^2 \le 2x^2$ and $\kappa x > sy$ or both $s^2 + \tilde \kappa^2 > 2x^2$ and $\kappa x \le sy$ we have
	\begin{align*}
		\begin{split}
			\tilde{g}(z;\delta,l) &> l(l+2) \bigl[ (s^2+\tilde \kappa^2)^2(x^2-y^2) + 4x^2y^2(2s^2+2\tilde \kappa^2+y^2-x^2) \\
			& \mathrel{\phantom{=}} \mathrel+ 4s(x^2+y^2)(2\kappa xy + s(x^2-y^2)) \bigr] \\
			& \mathrel{\phantom{=}} \mathrel+ 4l \left( 2\kappa xy(s^2+\tilde \kappa^2+y^2-x^2) + s(4x^2y^2+(s^2+\tilde \kappa^2)(x^2-y^2)) \right)
		\end{split} \\
		\begin{split}
			&= l(l+2) \Bigl[ \kappa^2s^2(s^2+\kappa^2) + \tilde \kappa^2(s^2+\tilde \kappa^2)(s^2+\tilde \kappa^2-\kappa^2) \\
			& \mathrel{\phantom{=}} \mathrel+ 4\tilde \kappa^2s^2\sqrt{(\tilde \kappa^2-\kappa^2)^2 + \kappa^2 s^2} \Bigr] + 4ls \left( \kappa^2 (s^2+\kappa^2) + \tilde \kappa^2(s^2+\tilde \kappa^2-\kappa^2) \right).
		\end{split}
	\end{align*}
	Summarising, we see that all cases give $\tilde{g}_{\pm}(z;\delta,l) > 0$ and $\tilde{g}(z;\delta,l) > 0$ for all modes $\tilde k$ satisfying $\tilde k^2 \ge k^2 - \sigma^2$ (i.e., $\tilde \kappa^2 \ge \kappa^2 - s^2$). From this we can deduce that when $\sigma \ge k$ we have positivity for all modes $\tilde k$ and hence $R_{2d}^{\mathrm{M}} < 1$. Note that $\sigma \ge k$ is far from a necessary requirement and it is clear that there is some slack in these bounds. We also remark from this analysis that modes $\tilde k \le \sqrt{k^2-\sigma^2}$ which are relatively close to $\sqrt{k^2-\sigma^2}$ yield the poorest bounds, suggesting they are the most problematic for the algorithm. Indeed, we may have $R_{1d}^{\mathrm{M}}(\tilde k) \ge 1$ when $\tilde k \le \sqrt{k^2-\sigma^2}$ for some choices of problem parameters. However, as in \Cref{theorem:2dhelmholtz} we can force positivity of $\tilde{g}_{\pm}(z;\delta,l)$ and $\tilde{g}(z;\delta,l)$ for all modes so long as $x$ or $lx$ is large enough. Since $x$ and $lx$ take the same expressions as in \Cref{theorem:2dhelmholtz} we can similarly deduce that, so long as the parameters $\sigma$, $\delta$ and $L$ between them are sufficiently large, we will have $\tilde{g}_{\pm}(z;\delta,l) > 0$ and $\tilde{g}(z;\delta,l) > 0$ for all modes $\tilde k$ and thus the required conclusion that $R_{2d}^{\mathrm{M}} < 1$.
\end{proof}
\pagebreak
\begin{corollary}[A case of $k$-independent convergence]
	\label{corollary:k-indpendenceMaxwell2D}
	Suppose that $\sigma = \sigma_{0} k$ for some constant $\sigma_{0}$. Consider a $k$-dependent domain decomposition given by $L = L_{0} k^{-1}$ and $\delta = \delta_{0} k^{-1}$, that is the subdomain size and overlap shrink inversely proportional to the wave number. Then the convergence factor $R_{2d}^{\mathrm{M}}$ can be bounded above by a $k$-independent value and this bound becomes tight as $k\rightarrow\infty$. As such, the convergence of the corresponding Schwarz method is ultimately independent of the wave number $k$ as it increases. Under the additional assumptions of \Cref{theorem:2dmaxwell} for convergence (now on $\sigma_{0}$, $L_{0}$ and $\delta_{0}$), we thus deduce that the approach will ultimately be $k$-robust and independent of the number of subdomains.
\end{corollary}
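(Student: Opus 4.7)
The plan is to proceed in direct analogy with the proof of \Cref{corollary:k-indpendenceHelmholtz2D}, with the only substantive change being to accommodate the Maxwell impedance parameter $\alpha = ik + \sigma$ in place of the Helmholtz choice $\alpha = ik$. The key observation is that the scaling $\sigma = \sigma_0 k$ makes $\alpha$ scale linearly with $k$ as well, since $\alpha = k(i + \sigma_0)$, so the same cancellation mechanism that drove the Helmholtz argument will carry over unchanged.

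First I would parametrise the Fourier modes by writing $\tilde k^2 = \beta k^2$ for $\beta > 0$. With $\sigma = \sigma_0 k$ the symbol \eqref{eq:lamk} factors as $\zeta(\tilde k) = k \zeta_0(\beta)$, where $\zeta_0(\beta) = \sqrt{i\sigma_0 + \beta - 1}$. Consequently $\zeta\delta = \delta_0 \zeta_0(\beta)$ and $\zeta L = L_0 \zeta_0(\beta)$ are independent of $k$, as are the ratios $(\zeta \pm \alpha)/k = \zeta_0(\beta) \pm (i + \sigma_0)$. Substituting these into the formulas \eqref{eq:ab0} for $a$ and $b$, the common factor of $k^2$ from $(\zeta \pm \alpha)^2$ cancels between numerator and denominator, so both coefficients, and hence the iteration matrix ${\cal T}_{1d}^{\mathrm{M}}(\tilde k)$ of the Maxwell reduction \eqref{eq:2dmax}, depend on $k$ only through $\beta$.

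Next I would note that $R_{2d}^{\mathrm{M}} = \sup_{\tilde k} R_{1d}^{\mathrm{M}}(\tilde k)$ is a supremum over the discrete set $\{\tilde k = m\pi/\hat L : m \in \mathbb{N}\}$, or equivalently over the countable set of $\beta$-values $\{(m\pi/(k\hat L))^2 : m \in \mathbb{N}\}$. Bounding this supremum by the continuous supremum over $\beta \in \mathbb{R}^+$ yields a $k$-independent upper bound on $R_{2d}^{\mathrm{M}}$, and as $k \to \infty$ the spacing between consecutive $\beta$-values tends to zero so that the discrete grid becomes dense in $\mathbb{R}^+$ and this bound becomes tight. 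Finally, invoking \Cref{theorem:2dmaxwell} under the analogous conditions on $\sigma_0$, $L_0$, $\delta_0$ (so that $\sigma \ge k$ becomes $\sigma_0 \ge 1$, etc.) ensures that this $k$-independent bound is strictly less than one, delivering both $k$-robustness and ultimate independence from the number of subdomains.

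No serious obstacle is anticipated: the proof is essentially algebraic and structurally identical to the Helmholtz case. The only point that needs care is checking that the additional $\sigma$-term in the Maxwell impedance parameter integrates cleanly with the $k$-scaling — which it does precisely because $\sigma$ is taken proportional to $k$, making $\alpha/k$ a constant (complex) number. The remaining ingredients, namely density of the $\beta$-grid as $k \to \infty$ and applicability of the convergence theorem under the rescaled hypotheses, transfer from the Helmholtz argument without modification.
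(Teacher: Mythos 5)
Your proposal is correct and follows essentially the same route as the paper, which simply declares the proof identical to that of \Cref{corollary:k-indpendenceHelmholtz2D}: substitute $\tilde k^2 = \beta k^2$, observe that $a$ and $b$ depend on $k$ only through $\beta$, bound the discrete supremum by the continuous one over $\beta\in\mathbb{R}^+$, and note density as $k\to\infty$. Your explicit check that $\alpha = ik+\sigma = k(i+\sigma_0)$ scales linearly in $k$ so the $k^2$ factors cancel in \eqref{eq:ab0} is exactly the (unstated) detail that justifies the paper's "identical" claim.
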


The proof is identical to that of \Cref{corollary:k-indpendenceHelmholtz2D} and we find similar empirical observations to those in \Cref{remark:tildek-supremumHelmholtz2D}. As before, we can also link $k$ and $N$ so that we consider solving on a fixed domain a family of problems with increasing wave number using an increasing number of subdomains and, under the conditions of \Cref{theorem:2dmaxwell} and \Cref{corollary:k-indpendenceMaxwell2D}, our theory shows that the algorithm will ultimately be $k$-robust and weakly scalable. Note that in the Maxwell case we are not aware of any theory showing $k$-robustness of the one-level method.

\section{Numerical simulations on the discretised equation}
\label{sec:num}

Although extensive numerical results are beyond the scope of this paper, in the following short section we will show some simulations which confirm our theory within the more practical setting of using an iterative Krylov method to accelerate convergence, with the Schwarz method being used as a preconditioner.

We focus here on the two-dimensional Helmholtz equation, as described in \Cref{sec:2dcase} (with $\alpha = ik$), where a (horizontal) plane wave is incoming from the left boundary and homogeneous Dirichlet boundary conditions are imposed on the top and bottom boundaries, giving a wave-guide problem. A second test case we consider is the propagation of such a wave in free space (i.e., when impedance boundary conditions are imposed on the whole boundary). While not covered by our theory, we will nonetheless observe similar conclusions, illustrating that the results apply more widely than within the restrictions of our theoretical assumptions. In our simulations, each subdomain is a unit square split uniformly with a fixed number of grid points in each direction. New subdomains are added on the right so that, with $N$ subdomains, the whole domain is $\Omega = (0,N)\times (0,1)$.

To discretise we use a uniform square grid in each direction and triangulate to form P1 elements. As we increase $k$ we increase the number of grid points proportional to $k^{3/2}$ in order to ameliorate the pollution effect \cite{Babuska:1997:IPE}. We use an overlap of size $2h$, with $h$ being the mesh size. All computations are performed using FreeFem (\url{http://freefem.org/}), in particular using the \texttt{ffddm} framework. We solve the discretised problem using GMRES where the parallel Schwarz method with Robin conditions is used as a preconditioner. In particular, we use right-preconditioned GMRES and terminate when a relative residual tolerance of $10^{-6}$ is reached. The construction of the domain decomposition preconditioner is described in detail in \cite{Bonazzoli:2019:ADD,Dolean:2020:IFD}. The preconditioner, which arises naturally as the discretised version of the parallel Schwarz method with Robin conditions we have studied (see, e.g., \cite{StCyr:2007:OMA}), is known as the one-level optimised restricted additive Schwarz (ORAS) preconditioner. This ORAS preconditioner is given by
\begin{align*}
	\mathbf{M}^{-1} = \sum_{i=1}^{N} \mathbf{R}_i^T \mathbf{D}_i \mathbf{A}_i^{-1} \mathbf{R}_i
\end{align*}
where $\left\{\mathbf{R}_i\right\}_{1 \le i \le N}$ are the Boolean restriction matrices from the global to the local finite element spaces and $\left\{\mathbf{D}_i\right\}_{1 \le i \le N}$ are local diagonal matrices representing the partition of unity. The key ingredient of the ORAS method is that the local subdomain matrices $\left\{\mathbf{A}_i\right\}_{1 \le i \le N}$ incorporate more efficient Robin transmission conditions.

Note that, unlike in \cite{Graham:2020:DDI} where the emphasis is placed on the independence of the one-level method to the wave number, we focus here on the scalability aspect, i.e., the independence of the one-level method with respect to the number of subdomains $N$ as soon as the absorption parameter $k\sigma$ is positive. We will observe that, beyond a sufficiently large value of $N$, the iteration count does not increase further, though in general this value will depend on the parameters of the problem, namely the wave number and absorption as well as the overlap and subdomain size. As a side effect, when the absorption is sufficiently large, i.e., of order $k$, wave number independence is also achieved.

In \Cref{Table:Itcount1} we detail the GMRES iteration count for an increasing number of subdomains $N$ and different values of $k$ for the wave-guide problem and the wave propagation in free space problem. We set the conductivity parameter as $\sigma = 1$ (giving an absorption parameter $k$). We see that, after an initial increase, the iteration counts become independent of the number of subdomains and also independent of the wave number, which is consistent with the results obtained in \cite{Graham:2020:DDI} where the absorption parameter for optimal convergence is of order $k$. Another possible explanation of this is that when the absorption parameter increases, the waves are damped and their amplitude will decrease with the distance to the boundary on which the excitation is imposed. Hence, when additional subdomains are added, the solution will not vary much in these subdomains and thus the residual will already be small.

\begin{table}[t]
	\centering
	\caption{Preconditioned GMRES iteration counts for varying wave number $k$ and number of subdomains $N$ when $\sigma = 1$.}
	\label{Table:Itcount1}
	\small
	\vspace*{-1mm}
	\begin{tabular}{c|cccccccc|cccccccc}
		& \multicolumn{8}{c|}{Wave-guide problem} & \multicolumn{8}{c}{Free space problem} \\
		\hline
		$k \backslash N$ & 8 & 16 & 24 & 32 & 40 & 48 & 56 & 64 & 8 & 16 & 24 & 32 & 40 & 48 & 56 & 64 \\
		\hline
		20  & 19 & 22 & 25 & 30 & 30 & 30 & 30 & 30 & 19 & 21 & 25 & 25 & 25 & 25 & 25 & 25 \\
		40  & 18 & 21 & 24 & 29 & 29 & 29 & 29 & 29 & 17 & 19 & 24 & 25 & 25 & 25 & 25 & 25 \\
		60  & 19 & 21 & 24 & 29 & 29 & 29 & 29 & 29 & 16 & 19 & 24 & 25 & 25 & 25 & 25 & 25 \\
		80  & 19 & 21 & 24 & 28 & 28 & 28 & 28 & 28 & 16 & 18 & 24 & 25 & 25 & 25 & 25 & 25 \\
		100 & 19 & 21 & 24 & 28 & 28 & 28 & 28 & 28 & 16 & 18 & 24 & 25 & 25 & 25 & 25 & 24
	\end{tabular}
	\vspace*{-2mm}
\end{table}

\section{Conclusions}
\label{sec:conc}

In this work we have analysed a purely iterative version of the Schwarz domain decomposition algorithm, in the limiting case of many subdomains, at the continuous level for the one-dimensional and two-dimensional Helmholtz and Maxwell's equations with absorption. The key mathematical tool which facilitated this study is the limiting spectrum of a sequence of block Toeplitz matrices having a particular structure, for which we proved a new result in the non-Hermitian case. The algorithm is convergent in the one-dimensional case as soon as we have absorption and, for sufficiently many subdomains $N$, its convergence factor becomes independent of the number of subdomains, meaning the algorithm is also scalable. In practice, this is achieved for relatively small $N$. In the two-dimensional case these conclusions remain true for the evanescent modes of the error (i.e., $\tilde k > k$) or when, between them, $\sigma$, $\delta$ and $L$ are sufficiently large. In particular, we proved that the stationary iteration will always converge when $\sigma \ge k$, giving an absorption parameter $k^2$. We further showed that the algorithm can be $k$-robust within certain scenarios, requiring the domain decomposition parameters to depend on $k$.

The concept of the limiting spectrum proved to be a very elegant mathematical tool and can be used, for example, in constructing more sophisticated transmission conditions, to further explore parameter robustness, to analyse the algorithm at the discrete level, or to design improved preconditioners.
\vspace*{4mm}

\bibliographystyle{siam}
\bibliography{paper_refs}

\end{document}